\theoremstyle{plain}
\newtheorem{thm}{Theorem} 
\newtheorem{lemma}{Lemma}
\newtheorem{cor}{Corollary}
\newtheorem{prop}{Proposition}
\newtheorem{conj}{Conjecture}
\theoremstyle{definition}
\newtheorem{remark}{Remark}
\newtheorem{example}{Example}
\newcommand{\QQ}{\mathbb{Q}}
\newcommand{\ZZ}{\mathbb{Z}}
\newcommand{\G}{\mathcal{G}}
\newcommand{\C}{\mathcal{C}}
\newcommand{\cP}{\mathcal{P}}
\newcommand{\Mgnbar}{\overline{\mathcal{M}}_{g,n}}
\newcommand{\Mgonebar}{\overline{\mathcal{M}}_{g,1}}
\newcommand{\p}{\partial}
\newcommand{\e}{\epsilon}
\newcommand{\beq}{\begin{equation}}
\newcommand{\eeq}{\end{equation}}
\newcommand{\nn}{\nonumber}
\begin{document}
\title[Large genus asymptotics of intersection numbers]{On the large genus asymptotics of \\  
 psi-class intersection numbers}
\author{Jindong Guo} 
\author{Di Yang}
\address{School of Mathematical Sciences, University of Science and Technology of China, 
230026, P.R.~China}
\email{guojindong@mail.ustc.edu.cn, diyang@ustc.edu.cn}

\keywords{matrix resolvent, Witten--Kontsevich correlator, psi-class intersection number, 
large genus, polynomiality phenomenon, tau-function}
 
\maketitle

\begin{abstract}
Based on an explicit formula of the generating series for the $n$-point 
psi-class intersection numbers (cf.~Bertola et.~al.~\cite{BDY1}), 
we give a novel proof of a conjecture of Delecroix et.~al.~\cite{DGZZ19} 
regarding the large genus uniform leading asymptotics of the psi-class intersection numbers. 
We also investigate polynomiality phenomenon in the large genera.
\end{abstract}

\tableofcontents

\section{Introduction and statements of the results}\label{section1}
Let $g,n$ be non-negative integers satisfying the stability condition
\beq \label{stability}
2g-2+n>0,
\eeq and
$\overline{\mathcal{M}}_{g,n}$ the Deligne--Mumford moduli space~\cite{DM69} of stable algebraic curves 
of genus~$g$ with~$n$ distinct marked points. 
Denote by~$\mathcal{L}_j$ the $j$th cotangent line bundle on $\overline{\mathcal{M}}_{g,n}$, $j=1,\dots,n$, 
and $\psi_j:=c_1(\mathcal{L}_j)$ the first Chern class of~$\mathcal{L}_j$. 
The following integrals of products of psi-classes over $\overline{\mathcal{M}}_{g,n}$
\begin{equation}\label{intnumb}
\int_{\Mgnbar} \psi_1^{d_1} \cdots \psi_n^{d_n}
\end{equation}
are called {\it $n$-point psi-class intersection numbers of genus~$g$}. Here $d_1,\dots,d_n$ are nonnegative integers. 
According to the degree-dimension matching, the intersection numbers~\eqref{intnumb} vanish unless 
\beq\label{dd}
d_1+\cdots+d_n=3g-3+n.
\eeq

In 1990, Witten~\cite{Wi90} made a striking conjecture --- {\it the partition function~$Z=Z({\bf t};\e)$ of the psi-class intersection 
numbers~\eqref{intnumb}, defined by
\beq\label{WK}
Z({\bf t};\e)=\exp \Biggl(\sum_{g,n\geq0} \frac{\e^{2g-2}}{n!} \sum_{d_1,\dots,d_n\geq0} t_{d_1} \cdots t_{d_n} \int_{\Mgnbar} \psi_1^{d_1} \cdots \psi_n^{d_n} \Biggr), 
\eeq
is a particular tau-function for the Korteweg--de Vries (KdV) hierarchy with a special normalization of times. Here
${\bf t}=(t_0,t_1,t_2,\dots)$ is an infinite vector of indeterminates and $\e$ is an indeterminate.
In particular, the power series
\beq
u=u({\bf t};\e) := \e^2\frac{\partial^2\log Z({\bf t};\e)}{\partial t_0^2}
\eeq
satisfies the KdV equation:
\beq
\frac{\p u}{\p t_1} = \frac12 u \frac{\p u}{\p t_0} + \frac{\e^2}{12} \frac{\p^3 u}{\p t_0^3}.
\eeq Moreover, the partition function~$Z$ satisfies the following string and dilaton equations, respectively:
\begin{align}
& \sum_{d=0}^\infty t_{d+1} \frac{\p Z}{\p t_d} + \frac{t_0^2}{2\e^2} Z = \frac{\p Z}{\p t_0}, \label{stringeq}\\
& \sum_{d=0}^\infty t_d \frac{\p Z}{\p t_d} + \e \frac{\p Z}{\p \e} + \frac1{24}Z = \frac{\p Z}{\p t_1}.  \label{dilatoneq}
\end{align}}

Identities~\eqref{stringeq}--\eqref{dilatoneq} were proved by Witten~\cite{Wi90}.
It was shown by Dijkgraaf, Verlinde, Verlinde~\cite{DVV} (cf.~also~\cite{AvM}) 
that Witten's conjecture can be equivalently stated as follows: the partition function~$Z$ satisfies the following set of linear equations, called the {\it Virasoro constraints},
\begin{align}
L_m (Z) = 0 , \quad m\geq -1,
\end{align}
where $L_m$ are linear operators defined by
\begin{align}
&L_m := -\frac{(2m+3)!!}{2^{m+1}} \frac{\p}{\p t_{m+1}} + \sum_{d=0}^\infty \frac{(2d+2m+1)!!}{(2d-1)!! \, 2^{m+1}} 
t_d \frac{\p }{\p t_{d+m}} \label{vira}\\
& + \frac{\e^2} 2 \sum_{d=0}^{m-1} \frac{(2d+1)!!(2m-2d-1)!!}{2^{m+1}}\frac{\p^2}{\p t_d \p t_{m-1-d}} 
+ \frac{t_0^2}{2\e^2}\delta_{m,-1} + \frac1{16} \delta_{m,0}. \nn
\end{align}
The operators $L_m$ satisfy the Virasoro commutation relations~\cite{AvM,DVV}:
\beq
\left[L_{m_1},L_{m_2}\right] = (m_1-m_2) \, L_{m_1+m_2},\quad \forall\,m_1,m_2\geq -1. 
\eeq

Witten's conjecture was first proved by Kontsevich~\cite{K}, and is now called  
the {\it Witten--Kontsevich theorem}; see~\cite{AIS, KL, LX0, M, OP} 
for several different proofs of this theorem. 
The partition function~$Z({\bf t};\e)$ is now known as the {\it Witten--Kontsevich tau-function}, 
and we call 
\beq
\langle \tau_{d_1} \cdots \tau_{d_n} \rangle(\e) := \frac{\partial^n \log Z({\bf t};\e)}{\p t_{d_1} \dots \p t_{d_n}}\bigg|_{{\bf t}={\bf 0}}, 
\quad n,d_1,\dots,d_n\geq0,
\eeq
the $n$-point {\it Witten--Kontsevich correlators}. By definition, 
\beq\label{corrinter}
\langle \tau_{d_1} \cdots \tau_{d_n} \rangle(\e) = \sum_{g\geq0} \e^{2g-2} \int_{\Mgnbar} \psi_1^{d_1} \cdots \psi_n^{d_n}.
\eeq

Following Aggarwal~\cite{Agg1} and Delecroix--Goujard--Zograf--Zorich (DGZZ)~\cite{DGZZ19}, 
for $g,n,d_1,\dots,d_n$ being nonnegative integers satisfying $2g-2+n>0$ and $d_1+\cdots+d_n=3g-3+n$, 
 define the {\it normalized psi-class intersection numbers} $\mathcal{G}_{d_1,\dots,d_n}(g)$ as follows:
\begin{equation}\label{defiG}
\mathcal{G}_{d_1,\dots,d_n}(g):= \frac{24^g g! \prod_{j=1}^{n}(2d_j+1)!!}{(6g+2n-5)!!} \int_{\Mgnbar} \psi_1^{d_1} \cdots \psi_n^{d_n}.
\end{equation}
The following conjecture was made recently by Delecroix et.~al. in~\cite{DGZZ19}, 
which we will refer to as the {\it DGZZ conjecture}~\cite{DGZZ19} (cf.~also~\cite{Agg1,DGZZ20, DGZZ22}).

\smallskip

{\noindent {\bf Conjecture A.}} (DGZZ~\cite{DGZZ19})
{\it For an arbitrary positive number~$C<2$,
\begin{equation}\label{DGZZ0906}
\lim\limits_{g\to+\infty}\max_{n\in \mathbb{Z}^{\ge 1} \atop n \le C\log(g)} \max_{d_1,\dots,d_n\geq0 \atop d_1+\cdots+d_n=3g-3+n} 
\left|\mathcal{G}_{d_1,\dots,d_n}(g)-1\right|=0.
\end{equation}
}

We note that the leading asymptotics of the $n$-point psi-class intersection numbers 
was proved by K.~Liu and H.~Xu~\cite{LX} for the special case when $n,d_1,\dots,d_{n-1}$ are all fixed. 
We also note that the DGZZ conjecture is proved by Aggarwal~\cite{Agg1}. Actually, Aggarwal~\cite{Agg1} 
proves a stronger result that Conjecture~A still holds when ``$n<C\log(g)$" of~\eqref{DGZZ0906} 
is replaced by ``$n=o\bigl(g^{1/2}\bigr)$"; so due to Aggarwal's theorem, the number~$C$ 
in Conjecture~A can be an arbitrarily given positive number.
The first result of this paper is a novel proof of the following theorem, which we call the {\it DGZZ-A theorem}.
\begin{thm}[DGZZ-A]\label{thm1}
For arbitrary $C>0$, formula~\eqref{DGZZ0906} is true .
\end{thm}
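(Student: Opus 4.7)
The plan is to build on the Bertola--Dubrovin--Yang generating-series formula \cite{BDY1}, which expresses the $n$-point function
\[
\sum_{d_1,\dots,d_n \ge 0} \frac{\prod_{j=1}^n (2d_j+1)!!}{\prod_{j=1}^n \lambda_j^{2d_j+2}} \langle \tau_{d_1}\cdots \tau_{d_n}\rangle(\e)
\]
in closed form as a trace of products of matrix resolvents of Airy type. The factor $\prod_j (2d_j+1)!!$ that appears naturally in this generating series is precisely the one absorbed into $\mathcal{G}_{d_1,\dots,d_n}(g)$, so coefficient extraction from the BDY formula provides direct access to $\mathcal{G}_{d_1,\dots,d_n}(g)$ without introducing spurious combinatorial factors.

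First I would expand each matrix resolvent as a formal power series in $1/\lambda$, whose coefficients are explicit rationals determined recursively by the Virasoro constraints. Extracting the coefficient of $\e^{2g-2}\prod_j \lambda_j^{-2d_j-2}$ converts $\mathcal{G}_{d_1,\dots,d_n}(g)$ into a finite sum indexed by combinatorial patterns (cyclic orderings and internal index choices for the trace). I would then single out the one pattern producing the top-order contribution; standard double-factorial manipulations should reduce it, once multiplied by $\tfrac{24^g g!\prod_j(2d_j+1)!!}{(6g+2n-5)!!}$, to $1+O(1/g)$ uniformly in $(d_1,\dots,d_n)$ satisfying $d_1+\cdots+d_n = 3g-3+n$.

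The main obstacle is to bound the sum of all remaining contributions uniformly for $1\le n\le C\log g$ and all admissible $(d_1,\dots,d_n)$. The difficulty is twofold: the number of combinatorial patterns grows roughly like $n!$, and the $d_j$-dependence of each subleading contribution must be controlled independently of how the budget $3g-3+n$ is split among the $d_j$'s. The latter I would handle via the polynomiality phenomenon in the $d_j$'s flagged in the abstract, which restricts the $d_j$-dependence of each subleading term to low-degree symmetric polynomials with uniformly bounded coefficients. For the former, one needs per-pattern bounds carrying decay of the form $g^{-cn}$ for some $c>0$; because $\log(n!)\sim n\log n$ is dwarfed by $cn\log g$ whenever $n\le C\log g$, such a bound suffices to force the aggregate error to zero even as $n$ is allowed to grow logarithmically with $g$ for arbitrarily large $C$.

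Combining the leading-order identification with these uniform error estimates then yields $\max\bigl|\mathcal{G}_{d_1,\dots,d_n}(g)-1\bigr|\to 0$ as $g\to+\infty$ over the entire range of $n$ and $(d_1,\dots,d_n)$ in Theorem \ref{thm1}.
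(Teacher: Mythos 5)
Your starting point --- coefficient extraction from the Bertola--Dubrovin--Yang trace formula --- is exactly the paper's, and the observation that the factor $\prod_j(2d_j+1)!!$ is built into that generating series is the right motivation. But the core of your argument, as described, contains three genuine gaps. First, there is no single ``pattern producing the top-order contribution.'' After expanding the trace formula one obtains $\G_{\underline d}(g)=\frac1n\sum_{\sigma\in S_n}\gamma_{\underline d,\sigma}(g)$ (Proposition~\ref{prop1}), and the leading term $1$ arises as the \emph{sum} over all $2^{n-2}$ unimodal permutations $\sigma\in S_{n,1}^{\{2\}}$, each contributing approximately $2^{-(n-2)}$ (cf.~\eqref{stronglemma7}); isolating one term and showing it equals $1+O(1/g)$ is not what happens, and no individual $\gamma_{\underline d,\sigma}(g)$ tends to $1$. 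Second, your proposed per-pattern bound of the form $g^{-cn}$, to be played off against the $n!$ count via $\log(n!)\sim n\log n\ll cn\log g$, cannot hold: the unimodal permutations do not decay at all, and for the others the correct decay rate is governed not by $n$ but by the number $l$ of monotone runs of $\sigma$. The paper's accounting groups permutations into the classes $S_{n,1}^{\{2l\}}$ with ${\rm card}\,S_{n,1}^{\{2l\}}\le(2l)^{n-1}$ and proves a per-permutation bound of order $g^{-(l-3)}$ (Lemma~\ref{lem7}), so that the count $(2l)^{n-1}\le g^{C\log(2l)}$ is beaten by $g^{-(l-3)}$ once $l$ exceeds a threshold depending on $C$; the finitely many remaining values of $l$ are handled separately. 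Without this $l$-stratification the bookkeeping in your third paragraph does not close.

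Third, and most seriously, you propose to control the $d_j$-dependence of the subleading contributions ``via the polynomiality phenomenon flagged in the abstract.'' That phenomenon is Conjectures~\ref{conj1011} and~\ref{conj1} of the paper --- it is an open statement, stated for \emph{fixed} $n$, and is itself expected to be a consequence of finer versions of the theorem you are trying to prove. Invoking it here is circular. The paper instead controls the $\underline d$-dependence by elementary counting of the lattice points $\underline j$ solving $K_{\underline d,\sigma,q}(\underline j)=k_q$ (statements (A) and (B) in the proof of Lemma~\ref{lem6}) together with the convergent majorant $\sum_{j\ge0}(j+3)^{-2}<\pi^2/6$ applied along each permutation, via the weighted quantities $\kappa_{k_1,\dots,k_n}(g)$ and Lemmas~\ref{lem1even0906}--\ref{lem3'}. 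A minor further point: the coefficients of the matrix $M(\lambda)$ are given in closed form in \eqref{defM1002}; no Virasoro recursion is needed, and indeed avoiding the Virasoro constraints is one of the stated contrasts with Aggarwal's proof.
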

\noindent The proof is given in Section~\ref{section2}. 

Here we provide some ideas of the proof. 
Recall that the {\it matrix-resolvent method of computing logarithmic derivatives of tau-functions} for  
integrable systems is developed 
in~\cite{BDY1,BDY2,BDY3} (see also~\cite{DVY, DY1, DY2, DY3, DYZ0, DYZ, Zhou}).
(It was pointed out in~\cite{BDY1, DY2} with explicit examples 
that for topological tau-functions this method is efficient when the genus is large.) 
In particular, using the Witten--Kontsevich theorem and the matrix-resolvent method for the KdV hierarchy, 
M.~Bertola, B.~Dubrovin and the second author of the present paper 
derived in~\cite{BDY1} an explicit formula of certain generating series for the 
$n$-point Witten--Kontsevich correlators or equivalently for the $n$-point psi-class intersection numbers 
(cf.~\eqref{corrinter} and~\eqref{dd}); see~also \cite{BDY2,DYZ,Zhou}. 
More precisely, for $n\geq1$, denote by $C_n(\lambda_1,\dots,\lambda_n)$ the following formal 
power series of $\lambda_1^{-1},\dots,\lambda_n^{-1}$:
\begin{equation}\label{defCn0921}
C_n(\lambda_1,\dots, \lambda_n)=\sum_{g,d_1,\dots,d_n\ge0}\frac{\prod_{j=1}^{n}(2d_j+1)!!}{\lambda_1^{d_1+1}\cdots\lambda_n^{d_n+1}}
\int_{\Mgnbar} \psi_1^{d_1} \cdots \psi_n^{d_n}.
\end{equation}
Then the formula from~\cite{BDY1} reads as follows:
\begin{align} 
& C_1(\lambda)=\sum_{g\geq1}\frac{(6g-3)!!}{24^g \, g! \, \lambda^{3g-1}}, \label{f1mr}\\
& C_n(\lambda_1,\dots,\lambda_n)= - \frac1{n} \sum_{\sigma\in S_n}\frac{{\rm tr}\big(M(\lambda_{\sigma(1)})\cdots M(\lambda_{\sigma(n)})\big)}{\prod_{i=1}^{n}(\lambda_{\sigma(i)}-\lambda_{\sigma(i+1)})} \label{f2mr} \\
& \qquad\qquad\qquad\qquad - \delta_{n,2}\frac{\lambda_1+\lambda_2}{(\lambda_1-\lambda_2)^2} \quad (n\ge 2), \nn
\end{align}
where $S_n$ denotes the symmetric group, for an element $\sigma\in S_n$, $\sigma(n+1)$ is understood as $\sigma(1)$, and 
$M(\lambda)$ denotes the following particular element of $\mathfrak{sl}(2,\QQ((\lambda^{-1})))$
\begin{equation}\label{defM1002}
M(\lambda) := 
\frac{1}{2}
\begin{pmatrix}
-\sum_{g=1}^{\infty}\frac{(6g-5)!!}{24^{g-1}(g-1)!}\lambda^{-3g+2} & -2\sum_{g=0}^{\infty}\frac{(6g-1)!!}{24^{g}g!}\lambda^{-3g} \\ 2\sum_{g=0}^{\infty}\frac{6g+1}{6g-1}\frac{(6g-1)!!}{24^{g}g!}\lambda^{-3g+1} & \sum_{g=1}^{\infty}\frac{(6g-5)!!}{24^{g-1}(g-1)!}\lambda^{-3g+2}
\end{pmatrix}.
\end{equation}
Here, we note that $(-1)!!=1$.
For $n=1$, formula~\eqref{f1mr}, or equivalently, 
\beq
\int_{\Mgonebar} \psi_1^{3g-2} = \frac{1}{24^g \, g!}, \quad g\geq1
\eeq
is well known. For $n\geq2$, 
to understand the 
right-hand side of~\eqref{f2mr} as power series of $\lambda_1^{-1},\dots,\lambda_n^{-1}$,
we introduce as in~\cite{BDY1} the notation
\beq
P(\sigma;\lambda_1,\dots,\lambda_n)=\prod_{q=1}^n \frac1{\lambda_{\sigma(q)}-\lambda_{\sigma(q+1)}}, \quad \sigma\in S_n,
\eeq
and we need to   
 perform the Laurent expansions around~$\infty$s (large $|\lambda_1|$, \dots, $|\lambda_n|$) of the rational function 
 $P(\sigma;\lambda_1,\dots,\lambda_n)$, $\sigma\in S_n$,  
and of $(\lambda_1+\lambda_2)/(\lambda_1-\lambda_2)^2$ (when $n=2$), 
within a certain region having a fixed ordering between $|\lambda_1|,\dots,|\lambda_n|$ satisfying  
$|\lambda_i|\neq |\lambda_j|$ for all $i\neq j$. It is shown in~\cite{BDY1,BDY3,DYZ} that the resulting formal Laurent series of the whole right-hand side of~\eqref{f2mr} is independent of 
the ordering (see~\cite{DYZ0} for a direct proof). 
Below for simplicity we fix the choice of the region to be $|\lambda_1|>\dots>|\lambda_n|$. We have the following lemma. 
\begin{lemma} \label{LaurentP}
For each $\sigma\in S_n$, the Laurent expansion of the rational function $P(\sigma;\lambda_1,\dots,\lambda_n)$ around~$\infty$s within the region  
$|\lambda_1|>\dots>|\lambda_n|$ is given by
\beq
P(\sigma;\lambda_1,\dots,\lambda_n) = (-1)^{m(\sigma)} \sum_{j_1,\dots,j_n\geq0} \prod_{q=1}^n 
\lambda_{\sigma(q)}^{J_{\sigma,q}(j_q)-J_{\sigma,q-1}(j_{q-1})-1}.
\eeq
Here, 
\beq\label{msdef}
m(\sigma):= {\rm card} \left\{q \in \{1,\dots,n\} \mid \sigma(q+1)<\sigma(q) \right\},
\eeq
and 
\begin{equation}\label{defjsi}
J_{\sigma,q}(j) := 
\left\{\begin{array}{ll}
-j-1,  &\sigma(q)<\sigma(q+1), \\
j,   &\sigma(q)>\sigma(q+1),
\end{array}\right.
\end{equation}
where $\sigma(0)$ is understood as $\sigma(n)$.
\end{lemma}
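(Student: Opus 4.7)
The plan is to expand each factor of the product $P(\sigma;\lambda_1,\dots,\lambda_n)=\prod_{q=1}^n (\lambda_{\sigma(q)}-\lambda_{\sigma(q+1)})^{-1}$ as a geometric series in the small variable, and then combine the $n$ expansions. Since we work in the region $|\lambda_1|>\cdots>|\lambda_n|$, for each $q$ we have $|\lambda_{\sigma(q)}|>|\lambda_{\sigma(q+1)}|$ iff $\sigma(q)<\sigma(q+1)$. First I would treat the two cases separately:
\begin{align*}
\sigma(q)<\sigma(q+1):\quad & \frac{1}{\lambda_{\sigma(q)}-\lambda_{\sigma(q+1)}} = \sum_{j_q\ge 0}\lambda_{\sigma(q)}^{-j_q-1}\lambda_{\sigma(q+1)}^{j_q},\\
\sigma(q)>\sigma(q+1):\quad & \frac{1}{\lambda_{\sigma(q)}-\lambda_{\sigma(q+1)}} = -\sum_{j_q\ge 0}\lambda_{\sigma(q)}^{j_q}\lambda_{\sigma(q+1)}^{-j_q-1}.
\end{align*}
The overall sign produced after taking the product over $q=1,\dots,n$ is precisely $(-1)^{m(\sigma)}$, since each descent $\sigma(q)>\sigma(q+1)$ contributes one minus sign.

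Next I would collect the exponent of each indeterminate $\lambda_{\sigma(q)}$. The variable $\lambda_{\sigma(q)}$ occurs in exactly two factors of the product: in factor $q$ as the \emph{first} argument, and in factor $q-1$ as the \emph{second} argument (with the cyclic convention $\sigma(0)=\sigma(n)$). From factor $q$ the contribution to the exponent of $\lambda_{\sigma(q)}$ is $-j_q-1$ if $\sigma(q)<\sigma(q+1)$ and $j_q$ if $\sigma(q)>\sigma(q+1)$; by definition~\eqref{defjsi} this is exactly $J_{\sigma,q}(j_q)$. From factor $q-1$ the contribution is $j_{q-1}$ if $\sigma(q-1)<\sigma(q)$ and $-j_{q-1}-1$ if $\sigma(q-1)>\sigma(q)$; a direct case check shows both values equal $-J_{\sigma,q-1}(j_{q-1})-1$. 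Adding the two contributions gives the exponent $J_{\sigma,q}(j_q)-J_{\sigma,q-1}(j_{q-1})-1$ asserted in the lemma.

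The only place a subtlety can creep in is the cyclic bookkeeping: one must verify that the index set of summation variables $\{j_1,\dots,j_n\}$ is correctly in one-to-one correspondence with factors of the product, that each $\lambda_{\sigma(q)}$ really appears in exactly the two adjacent factors (and no others), and that the $(-1)^{m(\sigma)}$ sign uses the cyclic notion of descent implicit in~\eqref{msdef}. These are all immediate from the definitions once the notation is set up carefully, so I expect no serious obstacle beyond this cyclic accounting.

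In summary, the proof amounts to one geometric expansion per factor followed by a direct collection of exponents. I would present it as a single display containing the two cases of the geometric expansion, a sentence identifying where the sign $(-1)^{m(\sigma)}$ comes from, and a two-line case verification that the exponent contributions from factors $q-1$ and $q$ combine to $J_{\sigma,q}(j_q)-J_{\sigma,q-1}(j_{q-1})-1$.
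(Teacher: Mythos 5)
Your proof is correct and is exactly the standard argument the paper has in mind (the paper in fact omits the proof of this lemma as elementary): one geometric expansion per factor in the region $|\lambda_1|>\cdots>|\lambda_n|$, a sign for each cyclic descent giving $(-1)^{m(\sigma)}$, and the exponent bookkeeping you describe, with the cyclic conventions $\sigma(0)=\sigma(n)$ and $j_0=j_n$. No gaps.
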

\noindent 
Explicit formulae for the $n$-point psi-class intersection numbers will then be derived
 in Proposition~\ref{prop1} of Section~\ref{section2}
using~\eqref{f2mr} and Lemma~\ref{LaurentP} 
as we described above, whose estimates  
will lead to Theorem~\ref{thm1}.

\begin{remark}
Both Aggarwal's proof~\cite{Agg1} and our proof use the Witten--Kontsevich theorem. However, unlike Aggarwal's proof where the {\it Virasoro constraints}~\eqref{vira} are used, 
our proof uses~\eqref{f1mr}--\eqref{f2mr}.  Also, in Aggarwal's proof a more sophisticated technique using probability theory is applied, while, as we shall see the estimates in our proof are more straightforward. 
\end{remark}

\begin{remark}
There are explicit formulae~\cite{Bur,K,O}
for several other types of generating series for the same intersection numbers~\eqref{intnumb}, 
with their relationships being discussed in~\cite{BDY1} (cf.~also~\cite{Zhou}); 
nevertheless, it seems to us that the explicit formula given by~\eqref{f1mr}--\eqref{f2mr} for generating series of the type~\eqref{defCn0921}
is the more suitable one towards the large genus asymptotics (cf.~\cite{BDY1,CMZ,DY1,DY2}).
\end{remark}

The behavior for $\G_{d_1,\dots,d_n}(g)$ in large~$g$ is easier to understand 
when $n$, $d_1,\dots,d_{n-1}$ are all {\it fixed} ($d_n=3g-3+n-d_1-\cdots-d_{n-1}$). 
For this situation, using \eqref{f1mr}--\eqref{f2mr} we will give in Section~\ref{section3} a new proof of  
 the following theorem, which can also be deduced easily from  
 the work of Liu and~Xu~\cite{LX}. So we refer to this theorem as {\it Liu--Xu's theorem}. 
\begin{thm} [Liu--Xu~\cite{LX}] \label{thm2}
For fixed $n\geq1$ and fixed $d_1,\dots,d_{n-1}\geq0$ being integers, write $|d|=d_1+\cdots+d_{n-1}$.
Then we have the asymptotic expansion
\begin{equation} \label{37}
\G_{d_1,\dots,d_{n-1},3g-3+n-|d|}(g) \sim 
\sum_{k=0}^{\infty}\frac{\mathcal{G}_{k}}{g^k} \quad (g\to \infty),
\end{equation}
where $\mathcal{G}_0=1$, and $\mathcal{G}_{k}=\G_k(n,d_1,\dots,d_{n-1})$, $k\geq 1$, are constants. 
Moreover, there exists a rational function $R(g;d_1,\dots,d_{n-1})$ of~$g$
whose coefficients may depend on $d_1,\dots,d_{n-1}$, 
such that 
\beq
\G_{d_1,\dots ,d_{n-1},3g-3+n-|d|}(g)=R(g;d_1,\dots,d_{n-1})
\eeq 
(so the above asymptotic expansion is convergent). 
Furthermore, the rational function $R(g;d_1,\dots,d_{n-1})$ has at most~$|d|$ possible poles at 
$-(2n-5)/6$, $-(2n-7)/6$, \dots, $-(2n-3-2|d|)/6$. 
\end{thm}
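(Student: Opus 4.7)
The plan is to derive, from~\eqref{f1mr}--\eqref{f2mr} together with Lemma~\ref{LaurentP}, a closed-form expression for $\mathcal{G}_{d_1,\ldots,d_{n-1},3g-3+n-|d|}(g)$ in which the $g$-dependence is manifestly rational, and then to read off both the pole structure and the leading coefficient. From~\eqref{defCn0921} one has
\begin{equation*}
\mathcal{G}_{d_1,\ldots,d_n}(g) \;=\; \frac{24^g\,g!}{(6g+2n-5)!!}\,\cdot\,[\lambda_1^{-d_1-1}\cdots\lambda_n^{-d_n-1}]\,C_n(\lambda_1,\ldots,\lambda_n),
\end{equation*}
with the stability constraint $d_1+\cdots+d_n=3g-3+n$ pinning down a unique genus, so no ambiguity arises in the coefficient extraction.

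For each $\sigma\in S_n$, let $k=k(\sigma)$ be the unique position with $\sigma(k)=n$. In the corresponding summand of~\eqref{f2mr}, the variable $\lambda_n$ enters only through the single matrix factor $M(\lambda_n)$ inside the trace and through the two denominator factors $\lambda_{\sigma(k-1)}-\lambda_n$ and $\lambda_n-\lambda_{\sigma(k+1)}$, all other positions of the trace being occupied by $M(\lambda_i)$ with $i<n$. Expanding $P(\sigma;\lambda_1,\ldots,\lambda_n)$ in the region $|\lambda_1|>\cdots>|\lambda_n|$ via Lemma~\ref{LaurentP} and extracting the coefficients of $\lambda_1^{-d_1-1}\cdots\lambda_{n-1}^{-d_{n-1}-1}$ reduces the problem to a finite sum: with $d_1,\ldots,d_{n-1}$ fixed, only finitely many combinations of the indices $(j_1,\ldots,j_n)$ from Lemma~\ref{LaurentP}, matrix-entry indices in the trace, and genera $g'_i$ (for $i<n$) are compatible with the exponent-matching constraints imposed on the $\lambda_i$'s with $i<n$. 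In each such contribution, the genus $g'_n$ of the remaining $g$-dependent factor $M(\lambda_n)$ is then forced to equal $g-\Delta$ for some nonnegative integer $\Delta$ bounded in terms of~$n$ and~$|d|$.

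Each entry of $M(\lambda)$ has coefficients of the form $\alpha(g')\,(6g'-c)!!/(24^{g'}g'!)$ with $c\in\{1,3,5\}$ and $\alpha$ a polynomial in~$g'$. Substituting $g'=g-\Delta$ and multiplying by the prefactor $24^g g!/(6g+2n-5)!!$ turns each contribution into
\begin{equation*}
\frac{24^\Delta\,g(g-1)\cdots(g-\Delta+1)\,\alpha(g-\Delta)\,(6g-6\Delta-c)!!}{(6g+2n-5)!!},
\end{equation*}
which is a rational function of~$g$ whose denominator is a product of odd linear factors of the form $(6g+2n-5-2j)$ over a finite index range determined by $6\Delta+c-(5-2n)$. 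Summing the finitely many such contributions over permutations and indices, together with the $\delta_{n,2}$-correction term in~\eqref{f2mr}, yields a rational function $R(g;d_1,\ldots,d_{n-1})$. Tracking the worst-case values of $\Delta$ and~$c$ throughout this process shows that its poles are confined to $\{-(2n-5)/6,\ldots,-(2n-3-2|d|)/6\}$, establishing the pole-structure assertion.

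The principal technical obstacle is the combinatorial bookkeeping: for every permutation and every admissible choice of expansion indices, one has to verify that the exponent-matching conditions force $6\Delta+c\leq 2|d|+5-2n$, which is exactly what confines the poles to the prescribed range. Once rationality with the claimed pole set is established, the asymptotic expansion~\eqref{37} and its convergence are automatic --- any rational function of~$g$ with poles in a fixed finite set admits a convergent Laurent expansion at $g=+\infty$. The identification $\mathcal{G}_0=1$ is then obtained by computing the $g\to+\infty$ limit of the closed-form expression, where the dominant contribution traces back to a distinguished permutation/index configuration whose combinatorial weight reproduces the normalization in~\eqref{defiG}; the case $n=1$, for which~\eqref{f1mr} gives $\mathcal{G}_{3g-2}(g)\equiv 1$, serves as the base case.
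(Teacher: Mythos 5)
Your overall route is the same as the paper's: extract the coefficient of $\lambda_1^{-d_1-1}\cdots\lambda_n^{-d_n-1}$ from~\eqref{f2mr} via Lemma~\ref{LaurentP} (equivalently, use Proposition~\ref{prop1}), observe that for fixed $d_1,\dots,d_{n-1}$ only finitely many index configurations $(\sigma,\underline{j})$ contribute (the paper bounds this count by $(|d|+n)^n$ after using cyclic symmetry to set $\sigma(n)=n$), and note that each contribution, after multiplication by $24^g g!/(6g+2n-5)!!$, is a rational function of~$g$ because the ratio of double factorials telescopes. This part of your argument is sound and establishes rationality, hence the convergent expansion~\eqref{37}.

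The genuine gap is in the pole-location claim. You assert that ``tracking the worst-case values of $\Delta$ and $c$'' confines the poles to $\{-(2n-5)/6,\dots,-(2n-3-2|d|)/6\}$, i.e.\ that every individual term satisfies $6\Delta+c\le 2|d|-2n+5$. This is false term by term: since $c\ge1$ and $\Delta\ge0$, the inequality already fails whenever $|d|<n-2$ (for instance $d_1=\dots=d_{n-1}=0$ with $n\ge3$, where the theorem asserts \emph{no} poles at all, yet individual summands of~\eqref{defepsilon} certainly carry denominators such as $(6g+2n-7)(6g+2n-9)\cdots$). So the sharp pole confinement requires cancellation among terms, not a worst-case bound on each term. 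The paper acknowledges exactly this: the direct argument via~\eqref{a} is only claimed when all $d_1,\dots,d_{n-1}\ge1$, and the case where some $d_i=0$ is handled separately by induction using the string equation~\eqref{stringeq}, which removes a $\tau_0$-insertion (decreasing $n$ while keeping $|d|$ fixed, so that the target pole set shifts consistently). Your proof needs either this reduction or an explicit demonstration of the cancellations. A secondary imprecision: the identification $\mathcal{G}_0=1$ does not come from ``a distinguished permutation/index configuration'' --- in the paper's analysis the leading term is assembled from all $2^{n-2}$ unimodal permutations, each contributing $1/2^{n-2}$ in the limit (cf.~\eqref{lem6estimate2} and~\eqref{stronglemma7}); the paper simply invokes Theorem~\ref{thm1} here. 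For fixed $n$ this is still a finite verification, so it is repairable, but as stated your description of the leading-order mechanism is not correct.
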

\noindent We note that a weaker form of Theorem~\ref{thm2} (giving $\G_0$ and $\G_1$ in the asymptotic expansion~\eqref{37}) also  
follows easily from the results of~\cite{DGZZ20}.

Let us now proceed to discuss a nice and new property for psi-class 
intersection numbers, which is described  
in the following two conjectures (Conjecture~\ref{conj1011} and Conjecture~\ref{conj1}). We will call it 
the {\it polynomiality phenomenon in the large genera}. This phenomenon as well as   
the integrality phenomenon found in~\cite{DYZ2} give the {\it arithmetic}  
perspective of intersection numbers in the large genera.

\begin{conj} \label{conj1011}
There exist a sequence of polynomials 
\begin{equation}
G_k\Bigl(n,p_0,\dots,p_{\left[\frac32 k\right]-1}\Bigr)\in\mathbb{Q}\left[n,p_0,\dots,p_{\left[\frac32 k\right]-1}\right], \quad k\geq 0
\end{equation} 
with $G_0(n)=1$, such that, 
for arbitrary fixed $n\geq 1$ and 
fixed $d_1,\dots,d_{n-1}$ being non-negative integers, 
 the following equalities hold: 
\beq\label{GkequalsGk}
\G_k(n,d_1,\dots,d_{n-1}) = G_k\Bigl(n,p_0,\dots,p_{\left[\frac32 k\right]-1}\Bigr),
\eeq
where $\G_k(n,d_1,\dots,d_{n-1})$ are the constants  
introduced in the above Theorem~\ref{thm2}, and
$p_i$ denotes the multiplicity of~$i$ in $(d_1,\dots,d_{n-1})$. 
Moreover, under the following degree assignments 
\begin{equation}
 \deg \, n=1,\quad \deg \, p_i=i+1,\quad i\geq 0,
\end{equation}
the polynomials $G_k(n,p_0,\dots,p_{[3k/2]-1})$ satisfy the degree estimates
\begin{equation}
\deg \, G_k\Bigl(n,p_0,\dots,p_{\left[\frac32 k\right]-1}\Bigr)\leq 2k
\end{equation} 
and contain the following explicit terms
\begin{equation}
 \frac{1}{12^kk!}\left(n^{2k}+(-1)^{k}p_0^{2k}\right).
 \end{equation}
\end{conj}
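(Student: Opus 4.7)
The plan is to combine the explicit formula \eqref{f2mr} with a careful $1/g$ expansion that tracks the combinatorial structure of the matrix $M(\lambda)$ defined in \eqref{defM1002}. Starting from Proposition~\ref{prop1} (which expresses $\int_{\Mgnbar}\psi_1^{d_1}\cdots\psi_n^{d_n}$ as a finite sum of products of $M(\lambda)$-entries, indexed by $\sigma\in S_n$ and by the Laurent indices supplied by Lemma~\ref{LaurentP}), I would substitute $d_n=3g-3+n-|d|$ and multiply by the normalization in \eqref{defiG}. This yields the rational function $R(g;d_1,\dots,d_{n-1})$ of Theorem~\ref{thm2} in closed form, whose denominator is exactly $\prod_{j=0}^{|d|-1}(6g+2n-5-2j)$.

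Next, I would expand $R$ as $\sum_k \mathcal{G}_k/g^k$ and analyze each $\mathcal{G}_k$. Symmetry in $d_1,\dots,d_{n-1}$ is automatic from the intersection numbers. The key claim is that each $\mathcal{G}_k$ is a \emph{polynomial} (not merely a rational function) in $d_1,\dots,d_{n-1}$ of total degree at most $2k$; this should follow by combining Stirling-type asymptotic ratios of $(6g+c)!!/(24^g g!)$ with a systematic cancellation of apparent poles in the $d_j$'s across the sum over $\sigma$ in \eqref{f2mr}. Once polynomiality is granted, the identities $\sum_j d_j^m=\sum_i i^m p_i$ and $n-1=\sum_i p_i$ rewrite $\mathcal{G}_k$ as a polynomial $G_k$ in $n$ and the multiplicities $p_i$. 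The refined truncation $i\le [3k/2]-1$ would come from a graded-weight argument exploiting the fact that each entry of $M(\lambda)$ lives on a $3\mathbb{Z}$-lattice in $\lambda$-exponents: each power of $1/g$ consumes a controlled amount of ``$\lambda$-weight'', bounding how large the relevant $i$-indices can be.

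To identify the explicit terms $\frac{1}{12^k k!}(n^{2k}+(-1)^k p_0^{2k})$, I would isolate two extremal sub-sums. The contribution $n^{2k}/(12^k k!)$ should come from the $n$-dependence carried by the normalization $24^g g!/(6g+2n-5)!!$, whose Stirling-type expansion in $1/g$ is generated by $\exp(n^2/(12g)+\cdots)$. The contribution $(-1)^k p_0^{2k}/(12^k k!)$ would come from a distinguished sub-family of permutations in \eqref{f2mr} whose Laurent coefficients provided by Lemma~\ref{LaurentP} force many of the selected $d_j$'s to vanish, with the sign $(-1)^k$ tracked by the parity $m(\sigma)$ defined in \eqref{msdef}; checking these coefficients for small $k$ and then lifting to all $k$ via a generating-function identity should close this step.

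The main obstacle will be establishing genuine polynomiality, rather than mere rationality, in the $d_j$'s of each $\mathcal{G}_k$, together with the sharp degree bound $2k$ and the sharp index truncation $[3k/2]-1$ on the $p_i$'s that appear. Both require a structural cancellation among matrix-resolvent contributions that seems to go beyond formal manipulation of \eqref{f2mr}; I expect it to reflect a hidden symmetry of the Witten--Kontsevich tau-function in the large-$d_n$ regime, and establishing this identity, likely via the KdV/Virasoro structure encoded in $M(\lambda)$, will be the core technical difficulty.
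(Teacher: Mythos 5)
The statement you are asked to prove is a \emph{conjecture} in the paper: the authors do not prove it, and only verify the $k=1,2$ cases (in Section~\ref{section3}, by taking Liu--Xu's Virasoro-derived expressions \eqref{C1expression}--\eqref{C2expression} for $\C_1,\C_2$, multiplying the expansion \eqref{LX1} by the factor $(6g)^{|d|}/\prod_{\ell=1}^{|d|}(6g+2n-3-2\ell)$ as in \eqref{defG}, and checking by direct computation that all occurrences of $|d|$ cancel). Your proposal is a plan rather than a proof, and the plan leaves open exactly the points that make the statement a conjecture. Concretely: (i) the claim that each $\G_k$ is a polynomial in $n,p_0,p_1,\dots$ at all --- equivalently, that the $|d|$-dependence visible in $\C_1,\C_2$ disappears for every $k$ after the change of normalization --- is asserted to ``follow by combining Stirling-type asymptotic ratios with a systematic cancellation of apparent poles,'' but no such cancellation mechanism is exhibited; (ii) the truncation of the variables at $p_{[3k/2]-1}$ and the degree bound $2k$ are attributed to a ``graded-weight argument'' that is not carried out; (iii) the identification of the terms $\tfrac{1}{12^kk!}\bigl(n^{2k}+(-1)^kp_0^{2k}\bigr)$ rests on a heuristic about the normalization factor (note that the naive expansion of $24^g g!/(6g+2n-5)!!$ produces $\exp\bigl(-(n^2-4n)/(6g)+\cdots\bigr)$, not $\exp\bigl(n^2/(12g)+\cdots\bigr)$, so even the source of the $n^{2k}$ term is not correctly located) together with a proposal to ``check for small $k$ and lift via a generating-function identity'' that is not supplied. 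You candidly flag the polynomiality, the degree bound, and the truncation as the ``core technical difficulty''; since these \emph{are} the statement, the proposal does not constitute a proof and cannot be repaired by the formal manipulations described. What can be salvaged along your lines is the rationality-in-$g$ and pole-location part, but that is Theorem~\ref{thm2}, which the paper proves separately; the passage from there to the conjectured polynomial structure in $(n,p_0,p_1,\dots)$ remains open both in your write-up and in the paper.
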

If Conjecture~\ref{conj1011} holds, then it follows from 
the rationality-in-$g$ statement of Theorem~\ref{thm2}   
that the polynomials $G_k(n,p_0,\dots,p_{[3k/2]-1})$, $k\geq0$,
contain {\it all} the information of the normalized intersection numbers $\G_{d_1,\dots,d_n}(g)$. 

\begin{example} \label{example1}
Assuming the validity of Conjecture~\ref{conj1011}, 
one can use Theorem~\ref{thm2}, \eqref{GkequalsGk}, and \eqref{f1mr}, \eqref{f2mr}
to determine the polynomials $G_k$ ($k\geq1$). We list in below the first several of them: 
{\small
\begin{align}
&G_1(n,p_0)=\frac{(n-1)(n-6)+(5-p_0)p_0}{12}, \label{g1expression}\\
&G_2(n,p_0,p_1,p_2)= \label{g2expression}\\
&\frac{(n-1)(3n^3-59n^2+298n-228)}{864}+\frac{ p_0(346-390n+30n^2)}{864} \nn \\ 
&+\frac{ p_0^2(69+78n-6n^2)-46p_0^3+3p_0^4-p_1(204-180p_0+36p_0^2)-60p_2}{864}, \nn\\
&G_3(n,p_0,p_1,p_2,p_3) = \label{g3expression} \\
&\frac{n^6-41n^5+555n^4-3031n^3+6092n^2-5160n+1584}{10368} \nn \\
&+\frac{-p_0^6+31p_0^5+3p_0^4(n^2-19n-73+12p_1)}{10368} \nn \\
&-\frac{p_0^3(46n^2-874n+552p_1+120p_2+127)}{10368} \nn \\
&+\frac{p_0^2 \bigl(-3n^4+98n^3-36n^2(p_1+20)+n(684p_1-1253)\bigr)}{10368} \nn\\
&+\frac{p_0^2(-54p_1^2+312p_1+285p_2+409)}{2592} \nn \\
&+\frac{p_0 \bigl(15n^4-490n^3+n^2(4291+180p_1)-12n(572+285p_1)\bigr)}{10368} \nn\\
&+\frac{p_0(90p_1^2+171p_1-285p_2-70p_3+258)}{864} \nn \\
&-\frac{102p_1^2+p_1(17n^2-323n+60p_2+402)+5(n^2p_2-19np_2-28p_3)}{864}.\nn
\end{align}}

\noindent We note that a concrete algorithm in~\cite{DY1,DY2} and the string and dilaton 
equations \eqref{stringeq}, \eqref{dilatoneq} (cf.~\eqref{stringapp}, \eqref{1011114})
could help to facilitate the computations. 
\end{example}

If $d_1,\dots,d_{n-1}$ are not fixed, we have a stronger conjectural statement.
\begin{conj}\label{conj1}
For each fixed $n\geq 1$ and for arbitrary $K\ge1$, 
\begin{align} 
&\lim\limits_{g\to+\infty} \max_{d_1,\dots,d_n\geq0 \atop d_1+\cdots+d_n=3g-3+n} 
g^K\left|\mathcal{G}_{d_1,\dots,d_n}(g)-\sum_{k=0}^{K}
\frac{G_k\Bigl(n,p_0,\dots,p_{\left[\frac32 k\right]-1}\Bigr)}{g^k}\right|=0, \label{seriesG} 
\end{align}
where $G_k$, $k\geq0$, denote the polynomials in 
Conjecture~\ref{conj1011}, 
and $p_i$ denotes the multiplicity of~$i$ in $(d_1,\dots,d_n)$.
\end{conj}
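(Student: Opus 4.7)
The plan is to start from the closed combinatorial formula for $\mathcal{G}_{d_1,\dots,d_n}(g)$ which results from combining the generating series identity~\eqref{f1mr}--\eqref{f2mr} with the Laurent expansion in Lemma~\ref{LaurentP}, and then to perform on each of its summands a $1/g$-expansion whose error is controlled \emph{uniformly} in the tuple $(d_1,\dots,d_n)$ with $d_1+\cdots+d_n=3g-3+n$. First I would extract from Proposition~\ref{prop1} (which is derived from~\eqref{f2mr} exactly as outlined after Lemma~\ref{LaurentP}) an explicit representation of the form
\begin{equation}
\mathcal{G}_{d_1,\dots,d_n}(g)=\sum_{\sigma\in S_n}\,\sum_{j_1,\dots,j_n\ge 0} A_{\sigma}(g;j_1,\dots,j_n;d_1,\dots,d_n),\nonumber
\end{equation}
where each $A_\sigma$ is a ratio of double factorials and factorials determined entirely by the entries of $M(\lambda)$ in~\eqref{defM1002} and by the shifts $J_{\sigma,q}(j_q)$ of~\eqref{defjsi}. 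Using Stirling's formula on these double factorials I would expand $A_\sigma$ as an asymptotic series in $1/g$ in which the $d_i$'s appear explicitly through rational functions and through the shifts $j_q$. Collapsing the expansions by cancellation among $\sigma\in S_n$ should produce the symmetric form predicted by Conjecture~\ref{conj1011}.

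The second stage is to prove Conjecture~\ref{conj1011} simultaneously. By the permutation symmetry of $\mathcal{G}_{d_1,\dots,d_n}(g)$, each coefficient $\mathcal{G}_k(n,d_1,\dots,d_{n-1})$ is symmetric in $d_1,\dots,d_n$ (after reinstating $d_n$), so by the fundamental theorem on symmetric functions it must be expressible via the power sums and hence via the multiplicities $p_i$. The truncation at $i\leq[3k/2]-1$ comes from tracking which orders of $j_q$ in Lemma~\ref{LaurentP} can survive at order $g^{-k}$: the ratio $(2d_i+2J_{\sigma,q}(j_q)+1)!!/(2d_i+1)!!$ is of order $d_i^{J_{\sigma,q}(j_q)}$, so negative shifts feed into higher-order terms in $1/g$ and positive shifts into lower-order ones. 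The degree bound $\deg G_k\leq 2k$ and the explicit $n^{2k}+(-1)^k p_0^{2k}$ term should come from following this bookkeeping on the two diagonal (resp.\ constant) contributions in~\eqref{defM1002} and using the string/dilaton equations \eqref{stringeq}, \eqref{dilatoneq} as independent consistency checks.

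The main obstacle, and the true content of Conjecture~\ref{conj1} beyond Theorem~\ref{thm2} and Conjecture~\ref{conj1011}, is the uniformity of the remainder. In the proof of Theorem~\ref{thm1} only the leading order is needed, so crude absolute bounds on each $A_\sigma$ are sufficient; here one must control simultaneously the first $K+1$ terms of the expansion, and show that the full infinite sum over $(j_1,\dots,j_n)$ of the non-extracted Laurent modes contributes $o(g^{-K})$ uniformly in~$d$. A natural route is to split the summation domain into a ``core'' region $j_1+\cdots+j_n\le N(K)$, on which the contributions are analyzed termwise using the expansion above, and a ``tail'' region, on which one dominates $A_\sigma$ by a geometric majorant built from the double-factorial asymptotics in~\eqref{defM1002}. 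Making this tail estimate uniform when several $d_i$'s become comparable (so that no single $d_i$ canonically dominates, and the region $|\lambda_1|>\cdots>|\lambda_n|$ fixed in Lemma~\ref{LaurentP} has no intrinsic meaning in the $d$-variables) is where I expect the principal difficulty to lie. A plausible remedy is to exploit the ordering-independence of the right-hand side of~\eqref{f2mr}, reorder the $\lambda_i$'s so that the expansion is always performed around the index with the largest $d_i$, and then combine the resulting bounds with the DGZZ-A estimates of Theorem~\ref{thm1} to absorb the regime where all $d_i$'s are of the same order $g/n$.
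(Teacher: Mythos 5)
The statement you are trying to prove is Conjecture~\ref{conj1}, which the paper explicitly leaves \emph{open}: there is no proof of it in the paper to compare against. What the authors actually establish is the much weaker Theorem~\ref{thm3}, which proves the expansion~\eqref{seriesG} only over the restricted range $d_1,\dots,d_n\ge\left[\frac{3K}2\right]$ --- precisely the regime in which every multiplicity $p_0,\dots,p_{[3K/2]-1}$ vanishes, so that the coefficients collapse to functions $P_k(n)$ of $n$ alone and the genuinely new content of the conjecture (the dependence on the multiplicities of the small parts) never appears. Your proposal is a reasonable research outline, but it is not a proof: every step where the real difficulty lives is flagged with ``should produce,'' ``I expect,'' or ``a plausible remedy is,'' and none of those steps is carried out.

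Two gaps in particular are fatal as written. First, your symmetric-function argument only shows that each coefficient $\G_k$ depends on the multiset $\{d_1,\dots,d_n\}$, i.e.\ on \emph{all} multiplicities $p_0,p_1,p_2,\dots$; the conjecture asserts the far stronger stability statement that $\G_k$ depends only on $p_0,\dots,p_{[3k/2]-1}$ and is completely insensitive to how the large parts are distributed. The double-factorial heuristic you offer for the truncation would have to be converted into an estimate that is uniform over all tuples simultaneously containing bounded and growing parts, and that is exactly what the paper's Lemmas~\ref{lem1even0906}--\ref{lem3'} and the proof of Theorem~\ref{thm3} fail to deliver outside the restricted range. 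Second, your proposal presupposes Conjecture~\ref{conj1011}, which the paper also leaves open (only the $k=1,2$ cases are verified, via Liu--Xu's Virasoro computations, and the authors explicitly note that even those cases do \emph{not} imply the $K=1,2$ cases of Conjecture~\ref{conj1}). So the argument as proposed rests on two unproven statements and leaves the central uniformity problem --- the tail estimate when several $d_i$ are comparable and some are small --- unresolved.
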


\begin{remark}
Recall that according to Aggarwal~\cite{Agg1}, 
the DGZZ-A theorem (i.e.~Theorem~\ref{thm1}) can lead to a beautiful proof  
of another DGZZ conjecture~\cite{DGZZ19} regarding 
the large genus asymptotics of Masur--Veech volumes of the moduli space of quadratic differentials. 
It would then be interesting if the validity of Conjecture~\ref{conj1} 
%or~\eqref{suspectlargeg} 
could imply the refinement given in~\cite{YZZ}.
\end{remark}

The next theorem partially supports the validity of Conjecture~\ref{conj1}.
\begin{thm} \label{thm3}
There exist a sequence of absolute functions 
$$P_k(n), \quad k\geq 0$$
with $P_0(n)\equiv1$, such that, for arbitrary $K\geq1$ and 
for every fixed $n\geq1$,
\begin{equation}
\lim_{g\to\infty} \max_{\substack{d_1,\dots,d_n\ge \left[\frac{3K}2\right]\\ d_1+\cdots+d_n=3g+n-3}} g^K 
\left|\G_{d_1,\dots ,d_n}(g)-\sum_{k=0}^K \frac{P_k(n)}{g^k}\right|=0.
\label{thm3formula}
\end{equation}
\end{thm}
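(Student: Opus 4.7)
\textbf{Proof proposal for Theorem~\ref{thm3}.}
The plan is to refine the explicit-formula approach developed for Theorem~\ref{thm1}. Starting from~\eqref{f2mr} together with Lemma~\ref{LaurentP}, I would extract the coefficient of $\lambda_1^{-d_1-1}\cdots\lambda_n^{-d_n-1}$ to represent $\G_{d_1,\dots,d_n}(g)$ as a finite sum indexed by permutations $\sigma\in S_n$, ``local genera'' $h_1,\dots,h_n$ selecting entries of the matrices $M(\lambda_q)$ from~\eqref{defM1002}, and multi-indices $(j_1,\dots,j_n)$ supplied by Lemma~\ref{LaurentP}. Once the prefactor $24^g g!\prod_j(2d_j+1)!!/(6g+2n-5)!!$ is incorporated, each summand factorizes as a combinatorial factor depending on $(\sigma,j,d)$ times a ratio of double factorials in $(h_1,\dots,h_n,g)$.

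First, I would apply standard asymptotic expansions for ratios such as $(6h-5)!!/(24^{h-1}(h-1)!)$ weighted against $(6g+2n-5)!!/(24^g g!)$ to rewrite each summand as a convergent Laurent series in $1/g$. The coefficients are rational functions of the $d_i$'s with numerators of bounded degree, and the $d$-dependence is localized in polynomial factors produced by the indexing of Lemma~\ref{LaurentP}.

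Second, I would truncate the sum by a parameter $M=M(n,K)$. The bounded part, with $\sum_q(h_q+j_q)\le M$, is treated explicitly: the hypothesis $d_i\ge\lfloor 3K/2\rfloor$ is strong enough to force every $d$-dependent factor surviving at orders $g^0,g^{-1},\dots,g^{-K}$ to simplify, either by vanishing for combinatorial reasons (when a required multi-index would demand some $d_i$ smaller than $\lfloor 3K/2\rfloor$) or by collapsing, after normalization, to a quantity depending only on~$n$. Summing over permutations and the bounded range of $(h,j)$ then produces the absolute functions $P_0(n)\equiv 1,\ P_1(n),\dots,P_K(n)$.

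The main obstacle will be the tail estimate: bounding the contribution of $\{\sum_q(h_q+j_q)>M\}$ by $o(g^{-K})$ uniformly in $d$. In the proof of Theorem~\ref{thm1} an analogous tail is only required to be $o(1)$, while here it must beat $g^{-K}$ across a polynomially-large admissible domain of $d$-tuples. This will require an order-by-order refinement of the asymptotics of the double-factorial ratios, uniform control of the polynomial growth in the $d_i$ of each individual summand, and exploitation of cancellations between different permutations $\sigma\in S_n$. The string and dilaton equations~\eqref{stringeq}--\eqref{dilatoneq} (which were noted to simplify the computation of the polynomials $G_k$ in Example~\ref{example1}) may provide an independent route to reducing the higher-order bookkeeping.
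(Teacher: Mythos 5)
Your overall strategy coincides with the paper's: start from the explicit formula of Proposition~\ref{prop1} (obtained from~\eqref{f2mr} and Lemma~\ref{LaurentP}), split the sum over $\underline{j}$ into a near-extremal part and a tail, and use the hypothesis $d_i\ge\left[\frac{3K}2\right]$ to control the tail. But as written the proposal has two genuine gaps. The first concerns the main term. You assert that the $d$-dependent factors surviving at orders $g^0,\dots,g^{-K}$ ``collapse, after normalization, to a quantity depending only on~$n$.'' This is false summand-by-summand: for a unimodal $\sigma\in S_{n,1}^{(r,n-r)}$ and an extremal index vector $\underline{k}$ with $k_{r+1}\ge 3g+\frac{3n}2-4-\left[\frac{3K}2\right]$, the equations $K_{\underline{d},\sigma,q}(\underline{j})=k_q$ have exactly $d_1-k_1$ solutions, so each permutation's contribution genuinely depends on $d_1$ up to order $g^{-K}$. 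The $d_1$-dependence disappears only after summing over $r=1,\dots,n-1$ with the signs $(-1)^{m(\sigma)+1}=(-1)^{n-r+1}$ and multiplicities $\binom{n-2}{r-1}$, invoking the cyclic symmetry of $a_{k_1,\dots,k_n}$ to align the sums, because $\sum_{r=1}^{n-1}(-1)^{n-r+1}\binom{n-2}{r-1}=0$; what remains is $\sum_r(-1)^{n-r}\binom{n-2}{r-1}\sum_{\underline{k}}k_{n-r}\,a_{k_1,\dots,k_n}$, a rational function of $g$ depending only on~$n$, which produces the $P_k(n)$ (see~\eqref{thm3formula6} and~\eqref{Thm3proof4}). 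You mention cancellation between permutations only in connection with the tail, where it is not the operative mechanism; without identifying this cancellation in the main term, the existence of $d$-independent coefficients $P_k(n)$ is unproved.

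The second gap is that you leave the tail estimate as an unresolved ``main obstacle.'' In fact no new order-by-order refinement is needed: the lemmas of Section~\ref{section2} are already stated with a free parameter~$m$, and the proof simply takes $m=5+\left[\frac{3K}2\right]$ in Lemma~\ref{lem3} (and $m=2+\left[\frac{3K}2\right]$ in Lemma~\ref{lem1even0906}, with odd-$n$ analogues). The point is that the hypothesis $d_1,\dots,d_n\ge\left[\frac{3K}2\right]$ forces, for every non-unimodal $\sigma$ and for all $\underline{j}$ outside a set of polynomially bounded cardinality, $\max_q K_{\underline{d},\sigma,q}(\underline{j})\le 3g+\frac{3n}2-3-m$, whence $|\kappa|=O\bigl(g^{2-\left[\frac{2m}3\right]}\bigr)=O\bigl(g^{-K-1}\bigr)$; and since $n$ is \emph{fixed} here (unlike in Theorem~\ref{thm1}), summing over the finitely many $\sigma\in S_n$ costs nothing. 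So the uniformity in $d$ that worries you comes for free from the $d$-independence of the constants in Lemmas~\ref{lem1even0906} and~\ref{lem3}, not from any additional cancellation.
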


\noindent The proof of this theorem is in Section~\ref{section4}.

\smallskip

The paper is organized as follows. In Section~\ref{section2} we prove Theorem~\ref{thm1}. 
In Section~\ref{section3} we prove Theorem~\ref{thm2}. 
In Section~\ref{section4} we prove Theorem~\ref{thm3}.
Proofs of several lemmas are given in Appendix~\ref{app0918}. 

\medskip

\noindent {\bf Acknowledgements} We are grateful to Don Zagier for several very helpful suggestions. 
We also thank the anonymous referee for very helpful comments that improve the presentation of the paper.
The work is partially supported by 
the National Key R and D Program of China 2020YFA0713100 (``Analysis and Geometry on Bundles") 
and NSFC 12061131014.

\section{Psi-class intersection numbers and proof of Theorem~\ref{thm1}}\label{section2}
In this section we prove Theorem~\ref{thm1}.
We first do a series of preparations.

\smallskip

\noindent {\it Notations.}

1. For $k_1,\dots,k_n\geq-1$, we use $a_{k_1,\dots,k_n}$ to denote 
certain rational numbers, defined by 
\begin{equation} \label{a}
a_{k_1,\dots,k_n}:=\left\{\begin{array}{ll}
2b_{k_1}\cdots b_{k_n},  &\quad P_1,\\
& \\
(-1)^{\sum_{j=0}^{s}(i_{2j+1}-i_{2j}-1)}b_{k_1}\cdots b_{k_n},  &\quad P_2, \\
& \\
(-1)^{\sum_{j=1}^{s}(i_{2j}-i_{2j-1}-1)}b_{k_1}\cdots b_{k_n},  &\quad P_3,\\
& \\
0, &\quad {\rm otherwise}.
\end{array}
\right.
\end{equation}
Here, $b_k$, $k\geq-1$, are given by
	\begin{equation}\label{defbk}
		b_k:=\left\{
		\begin{array}{ll}
			-\frac{(6g-1)!!}{24^gg!},  &k=3g, \\
			\\
			\frac{6g+1}{6g-1}\frac{(6g-1)!!}{24^gg!},  &k=3g-1, \\
			\\
			\frac{1}{2}\frac{(6g-5)!!}{24^{g-1}(g-1)!},   &k=3g-2, \\
		\end{array}
		\right.
	\end{equation}
and the conditions $P_1,P_2,P_3$ in the case distinction are given by
\begin{itemize}
\item[$P_1$:] $n$ is even and $k_i\equiv 1\,({\rm mod}\,3)$ for all $i=1,\dots,n$,
\item[$P_2$:] there exist $i_1<\dots<i_{2s}$, such that $k_{i_{2j-1}}\equiv0\,({\rm mod}\,3)$, $k_{i_{2j}}\equiv 2\,({\rm mod}\,3)$, $\forall\, j=1,\dots,s$, 
and $k_{t}\equiv 1\,({\rm mod}\,3)$, $\forall \, t\notin \{{i_1}, \dots ,i_{2s}\}$, setting $i_0=0$, $i_{2s+1}=n+1$,
\item[$P_3$:] there exist $i_1<\dots< i_{2s}$, such that $k_{i_{2j-1}}\equiv2\,({\rm mod}\,3)$, $k_{i_{2j}}\equiv 0\,({\rm mod}\,3)$, $\forall\, j=1,\dots,s$, 
and $k_{t}\equiv 1\,({\rm mod}\,3)$, $\forall\,t\notin \{{i_1},\dots ,i_{2s}\}$.
\end{itemize}
We also make the {\it convention} that $a_{k_1,\dots,k_n}:=0$ if one of $k_1,\dots,k_n$ is smaller than or equal to~$-2$. 
It is clear from~\eqref{a} that $a_{k_1,\dots,k_n}$ is invariant with respect to the cyclic permutations 
of its indices, and it vanishes unless
\beq\label{kisumgn}
k_1+\cdots+k_n=3g-3+n
\eeq
for some $g\in \ZZ$.

\smallskip

2. For $n\geq2$, $\underline{d}=(d_1,\dots,d_n)\in \left(\ZZ^{\ge 0}\right)^n$, $\sigma\in S_n$, $q=1,\dots,n$, 
define $K_{\underline{d}, \sigma, q}:  \left(\ZZ^{\ge 0}\right)^n\rightarrow \ZZ$ by 
\begin{align} 
& K_{\underline{d}, \sigma, q} (\underline{j}):=d_{\sigma(q)}+J_{\sigma,q}(j_q)-J_{\sigma,q-1}(j_{q-1}), \label{defK927}
\end{align}
where $\underline{j}=(j_1,\dots,j_n) \in \left(\ZZ^{\ge 0}\right)^n$, and $J_{\sigma,q}$ are given in~\eqref{defjsi}.

\smallskip

3. For $g\geq0$, $n\geq2$, and for $k_1,\dots,k_{n}\geq-1$ being integers satisfying~\eqref{kisumgn},
denote 
\begin{equation}\label{defikappa}
\kappa_{k_1,\dots,k_n}(g):=\frac{24^{g+\left[\frac{n}2\right]-1} \left(g+\left[\frac{n}2\right]-1\right)!}{ \left(6g+2 \left[\frac{3n}2\right]-7\right)!!}
a_{k_1,\dots,k_n}.
\end{equation}

\smallskip

4. For $n\ge2$, $\underline{d}\in \left(\ZZ^{\ge 0}\right)^n$, $\sigma\in S_n$, and $r_1,r_2\in\ZZ$, denote 
\begin{align}
& V^{r_2}_{\underline{d},\sigma,r_1}
:= \left\{\underline{j}\in \left(\ZZ^{\geq0}\right)^n \Big|
r_1\leq \max_{1\leq q\leq n} K_{\underline{d}, \sigma,q}(\underline{j}) \leq r_2\right\} \label{Vr1r2def}\\
&\quad\quad\quad\quad\quad\bigcap\left\{\underline{j}\in \left(\ZZ^{\geq0}\right)^n\Big| a_{K_{\underline{d}, \sigma,1}(\underline{j}),\dots,K_{\underline{d}, \sigma,n}(\underline{j})}\neq0 \right\}.\nn 
\end{align}

\smallskip

5. For $n\ge 2$, $1\le r\le n-1$ being integers, 
we call a permutation $\sigma\in S_n$ with $\sigma(1)=1$ 
an {\it $(r,n-r)$-permutation}, if
\begin{align}
	& 1=\sigma(1)<\sigma(2)<\dots <\sigma(r+1)=n,\\
	& n>\sigma(r+2)>\sigma(r+3)>\cdots >\sigma(n)>1
\end{align}
(namely, {\it firstly increasing then decreasing}, often called {\it unimodal} permutations).
The set of all permutations of this type is denoted by $S_{n,1}^{(r,n-r)}$. We also denote 
$S_{n,1}^{\{2\}}=\cup_{r=1}^{n-1}S_{n,1}^{(r,n-r)}$. 
For $n\geq2$, $l\geq1$, denote by 
$S_{n,1}^{\{2l\}}$ the set of all permutations $\sigma \in S_n$ with $\sigma(1)=1$ satisfying that 
there exist $1=t_1<\dots<t_{2l}\leq n$ such that
\begin{align}
& \sigma(t_{2u-1})<\sigma(t_{2u-1}+1)<\cdots<\sigma(t_{2u}),\label{defSn2l1} \\
& \sigma(t_{2u})>\sigma(t_{2u}+1)>\cdots>\sigma(t_{2u+1}) \label{defSn2l2}
\end{align}
for all $1\leq u\leq l$. Here we set $t_{2l+1}=t_1$. 
Let us list a few elementary combinatorial facts that will be used later:
\begin{align}
& {\rm card} \, S_{n,1}^{(r,n-r)}=\binom{n-2}{r-1},\\
& {\rm card} \, S_{n,1}^{\{2\}}=2^{n-2}, \label{numberofeleinsn1}\\
& {\rm card} \, S_{n,1}^{\{2l\}} \leq  (2l)^{n-1}. \label{estimatesn12l}
\end{align}

\smallskip

Using the method described in Section~\ref{section1} and the above notations we will prove the following proposition. 

\begin{prop}\label{prop1}
For $n\ge 2$, $g\geq0$, and $\underline{d}=(d_1,\dots,d_n)\in \left(\ZZ^{\ge 0}\right)^n$ satisfying $d_1+\dots+d_n=3g+n-3$, the following formula is true:
\begin{equation} \label{formulaWK}
\G_{\underline{d}}(g) = \frac1n \sum_{\sigma\in S_n} \gamma_{\underline{d},\sigma}(g),
\end{equation}
where
\begin{align} \label{defepsilon}  
& \gamma_{\underline{d},\sigma}(g) 
:= \frac{24^g g!\, (-1)^{m(\sigma)+1}}{(6g-5+2n)!!} \sum_{\underline{j}\in (\mathbb{Z}^{\ge0})^n}
a_{K_{\underline{d},\sigma,1}(\underline{j}),\dots,K_{\underline{d},\sigma,n}(\underline{j})}.
\end{align}
\end{prop}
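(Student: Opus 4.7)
The plan is to extract the coefficient of $\lambda_1^{-d_1-1}\cdots\lambda_n^{-d_n-1}$ from both sides of~\eqref{f2mr} and to convert the resulting identity into~\eqref{formulaWK} via the normalization~\eqref{defiG}. The main step---and the one I expect to be the only real obstacle---is a combinatorial identification of the Laurent expansion of $\mathrm{tr}\bigl(M(\mu_1)\cdots M(\mu_n)\bigr)$ with the quantity $a_{k_1,\dots,k_n}$ from~\eqref{a}; everything else is algebraic bookkeeping.

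First I would read off from~\eqref{defM1002} the following ``tri-graded'' structure of $M(\lambda)=\sum_k M_k\lambda^{-k}$ modulo~$3$: the coefficient $(M_{12})_k$ is supported on $k\equiv 0\,(\bmod\,3)$, $(M_{21})_k$ on $k\equiv 2\,(\bmod\,3)$ (with $k=-1$ the smallest admissible index), and $(M_{11})_k=-(M_{22})_k$ on $k\equiv 1\,(\bmod\,3)$; in each admissible case the entry equals $b_k$ defined by~\eqref{defbk}, except for the $(1,1)$ entry which equals $-b_k$. Expanding the trace as a sum over cyclic index strings $(i_1,\dots,i_n)\in\{1,2\}^n$, the residue $k_q\,(\bmod\,3)$ prescribes the transition at position $q$: $k_q\equiv 0$ forces $(i_q,i_{q+1})=(1,2)$, $k_q\equiv 2$ forces $(2,1)$, and $k_q\equiv 1$ forces a diagonal entry. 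Cyclic closure then forces the non-diagonal positions to alternate between residues $0$ and $2$. When all $k_q\equiv 1$ the index string is constant, and summing over the two choices $i_q\equiv 1$ or $i_q\equiv 2$ yields $(1+(-1)^n)\prod b_{k_q}$, which is case $P_1$. Otherwise the choice of which alternation pattern to start with gives precisely cases $P_2$ and $P_3$, and the sign exponents $\sum(i_{2j+1}-i_{2j}-1)$ in~\eqref{a} count exactly the number of diagonal positions that the cycle traverses while visiting state~$1$, each of which contributes a factor $-b_{k_q}$ from $(M_{11})_{k_q}$.

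With this identification, substituting $\mu_q=\lambda_{\sigma(q)}$ and multiplying by the Laurent expansion of $P(\sigma;\lambda_1,\dots,\lambda_n)$ from Lemma~\ref{LaurentP}, in the region $|\lambda_1|>\dots>|\lambda_n|$, gives
\[
\mathrm{tr}\bigl(M(\lambda_{\sigma(1)})\cdots M(\lambda_{\sigma(n)})\bigr)\,P(\sigma;\lambda_1,\dots,\lambda_n)
=(-1)^{m(\sigma)}\sum_{\underline{k},\underline{j}}a_{k_1,\dots,k_n}\prod_{q=1}^{n}\lambda_{\sigma(q)}^{-k_q+J_{\sigma,q}(j_q)-J_{\sigma,q-1}(j_{q-1})-1}.
\]
Requiring the exponent of each $\lambda_{\sigma(q)}$ to equal $-d_{\sigma(q)}-1$ forces $k_q=K_{\underline{d},\sigma,q}(\underline{j})$ by definition~\eqref{defK927}. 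Summing over $\sigma\in S_n$ with the prefactor $-1/n$ from~\eqref{f2mr} then gives the coefficient of $\prod_j\lambda_j^{-d_j-1}$ in the first summand of $C_n$.

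It remains to note that the correction $-\delta_{n,2}(\lambda_1+\lambda_2)/(\lambda_1-\lambda_2)^2$ in~\eqref{f2mr}, when Laurent-expanded in the region $|\lambda_1|>|\lambda_2|$, contains only nonnegative powers of $\lambda_2$, hence contributes $0$ to the coefficient of $\lambda_1^{-d_1-1}\lambda_2^{-d_2-1}$ for any $d_2\ge 0$. Comparing the resulting identity with the defining relation~\eqref{defCn0921} identifies the extracted coefficient with $\prod_{j=1}^{n}(2d_j+1)!!\int_{\Mgnbar}\psi_1^{d_1}\cdots\psi_n^{d_n}$, and multiplying by $24^g g!/(6g+2n-5)!!$ from~\eqref{defiG} yields~\eqref{formulaWK}, with the overall $-1/n$ absorbed into the sign $(-1)^{m(\sigma)+1}$ in~\eqref{defepsilon}.
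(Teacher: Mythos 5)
Your proposal is correct and follows the same overall route as the paper: expand $\mathrm{tr}\bigl(M(\lambda_{\sigma(1)})\cdots M(\lambda_{\sigma(n)})\bigr)$ via the coefficient matrices of $M(\lambda)$, multiply by the Laurent expansion of $P(\sigma;\lambda_1,\dots,\lambda_n)$ from Lemma~\ref{LaurentP}, extract the coefficient of $\prod_j\lambda_j^{-d_j-1}$ from~\eqref{f2mr}, check that the $\delta_{n,2}$ correction contributes nothing, and normalize by~\eqref{defiG}. The matching of exponents forcing $k_q=K_{\underline{d},\sigma,q}(\underline{j})$ and the bookkeeping of the sign $(-1)^{m(\sigma)+1}$ are exactly as in the paper.

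The one place where you genuinely diverge is the identity $a_{k_1,\dots,k_n}=\mathrm{tr}(A_{k_1}\cdots A_{k_n})$ (equation~\eqref{defia}), which the paper relegates to the appendix and proves by computing the full matrix product $A_{k_1}\cdots A_{k_n}$ inductively through the $21$-case table (Table~\ref{table123}). You instead expand the trace directly as a sum over closed index strings $(i_1,\dots,i_n)\in\{1,2\}^n$, observe that the residue of $k_q$ modulo~$3$ dictates the allowed transition at position~$q$, and read off the alternation structure of $P_2$/$P_3$ and the sign $(-1)^{\sum(i_{2j+1}-i_{2j}-1)}$ as the count of diagonal visits to state~$1$. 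Since only the trace (not the full product) is needed for the proposition, this closed-walk argument is a leaner and arguably more transparent derivation of~\eqref{defia}; the paper's inductive computation of the full product pays off only in that it makes the cyclic symmetry of $a_{k_1,\dots,k_n}$ visible entrywise. Two minor points you leave implicit, both harmless: the convention $a_{k_1,\dots,k_n}=0$ for some $k_i\le-2$ corresponds to $A_k=0$ there (so the sum over $\underline{k}$ starts at $-1$), and the finiteness of the sum over $\underline{j}$ in~\eqref{defepsilon}, which the paper verifies in the remark following the proposition.
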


\noindent Here, we note that there are only a finite number of nonzero summands on the 
right-hand side of~\eqref{defepsilon}. Indeed, by the above given convention it suffices to indicate that
there are finite many $\underline{j}\in \left(\ZZ^{\ge0}\right)^n$ such that $K_{\underline{d},\sigma,q}(\underline{j})\ge-1$ for all $q=1,\dots,n$. Due to the cyclic invariance of $\gamma_{\underline{d},\sigma}(g)$, we can assume $\sigma(1)=1$, say,
 $\sigma\in S_{n,1}^{\{2l\}}$. Let $t_1,\dots,t_{2l}$ be defined by \eqref{defSn2l1}, \eqref{defSn2l2}. If $s\in\ZZ$ satisfies $t_{2u-1}\leq s<t_{2u}$ for some $u$, then from~\eqref{defK927} and~\eqref{defjsi} we have 
\begin{align}
-n\leq \sum_{q=t_{2u-1}}^{s}K_{\underline{d},\sigma,q}(\underline{j})
%&=J_{\sigma,s}(j_s)-J_{\sigma,t_{2u-1}-1}(j_{t_{2u-1}-1})+\sum_{q=t_{2u-1}}^{s}d_{\sigma(q)}\nn\\
&=-j_{t_{2u-1}-1}-j_{s}-1+\sum_{q=t_{2u-1}}^{s}d_{\sigma(q)} \leq 3g+n-4-j_s,\nn
\end{align}
which gives $j_s\leq 3g+2n-4$. If $s\in\ZZ$ satisfies $t_{2u}\leq s<t_{2u+1}$ for some~$u$, we have $-n\leq \sum_{q=s+1}^{t_{2u+1}}K_{\underline{d},\sigma,q}(\underline{j})\leq 3g+n-4-j_s$, which gives $j_s\leq 3g+2n-4$. 
The needed finiteness follows.

\begin{remark}
Formula~\eqref{formulaWK} with $n=2$ is given in~\cite{BDY1}, which is shown in~\cite{Guo}  
to be equivalent to Zograf's formula~\cite{Zograf} 
for the 2-point intersection numbers. 
\end{remark}

\begin{proof}[Proof of Proposition~\ref{prop1}]
The matrix $M(\lambda)$ defined in~\eqref{defM1002} can be written as
\begin{equation}
M(\lambda)=\sum_{k=-1}^{\infty}A_k\lambda^{-k},
\end{equation}
where $A_k$, $k\geq-1$, are matrices given by 
\begin{equation}
A_k=\left\{
\begin{array}{ll}
			\begin{pmatrix}
				0 & -\frac{(6g-1)!!}{24^gg!} \\ 0 & 0
			\end{pmatrix}, \qquad &k=3g, \\
			\\
			\begin{pmatrix}
				0 & 0 \\ \frac{6g+1}{6g-1}\frac{(6g-1)!!}{24^gg!} & 0
			\end{pmatrix}, \qquad &k=3g-1, \\
			\\
			\begin{pmatrix}
				-\frac{1}{2}\frac{(6g-5)!!}{24^{g-1}(g-1)!} & 0 \\ 0 & \frac{1}{2}\frac{(6g-5)!!}{24^{g-1}(g-1)!}
			\end{pmatrix}, \qquad &k=3g-2. \\
\end{array}\right.
\end{equation}
Then we have the following identity: 
\begin{equation}\label{defia}
a_{k_1,\dots,k_n}=\mathrm{tr} \, (A_{k_1}\cdots A_{k_n}), \quad k_1,\dots,k_n\ge -1.
\end{equation}
The details of the proof of this identity are given in Appendix~\ref{app0918}.
The proposition can be proved by using~\eqref{f2mr}, \eqref{defia} and Lemma~\ref{LaurentP}.
\end{proof}
We note that the cyclic symmetry of $a_{k_1,\dots,k_n}$ is more obvious from~\eqref{defia}.

The following five lemmas give certain estimates about $\kappa_{k_1,\dots,k_n}(g)$ defined in~\eqref{defikappa}. 
 Their proofs 
 %of these lemmas 
 are put in Appendix~\ref{app0918}. We will use $\ZZ_{\rm even}^{\ge k}$ ($\ZZ_{\rm odd}^{\ge k}$) to denote the sets of even (odd) integers that are bigger than or equal to~$k$.

\begin{lemma} \label{lem1even0906}
For every $m\geq0$, there exists a constant $C_1=C_1(m)$, such that for $g$ being sufficiently large,
\beq
\label{lem1evenestimate}
\sup_{n\in \mathbb{Z}_{\rm even}^{\geq2}}
\max_{\substack{ -1 \le k_1,\dots,k_n\le 3g+\frac{3n}2-3-m \\ k_1+\cdots+k_n=3g-3+n}}  g^{\left[\frac23 m\right]}
\left|\kappa_{k_1,\dots,k_n}(g)\right| \leq C_1,
\eeq
\end{lemma}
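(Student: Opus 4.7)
Since $n$ is even, $[n/2]=n/2$ and $[3n/2]=3n/2$, so with $G := g + n/2 - 1$ formula~\eqref{defikappa} becomes
\[
\kappa_{k_1,\dots,k_n}(g)=\frac{24^{G}G!}{(6G-1)!!}\,a_{k_1,\dots,k_n}.
\]
From~\eqref{a}, in each of the nonvanishing cases $P_1,P_2,P_3$ one has $|a_{k_1,\dots,k_n}|\le 2\prod_i|b_{k_i}|$, so it suffices to prove the uniform bound
\[
\frac{24^{G}G!}{(6G-1)!!}\prod_{i=1}^n|b_{k_i}|\,\le\,\frac{C_1(m)}{g^{[2m/3]}}
\]
under the constraints $-1\le k_i\le 3G-m$ and $\sum k_i = 3G - n/2$.

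First I would record uniform Stirling estimates $|b_k|\asymp(k/e)^{2k/3}/(k+1)^{\alpha(k)}$ (with $|b_{-1}|=1$ and $\alpha(k)\in\{1/2,3/2\}$ depending on $k\bmod 3$), together with $(6G-1)!!/(24^{G}G!)\asymp(3G/e)^{2G}/\sqrt{G}$. A direct consequence is the key ratio estimate
\[
\frac{|b_{3G-m}|}{|b_{3G}|}\,\asymp\,g^{-[2m/3]}\quad(g\to\infty),
\]
the floor $[2m/3]$ arising because $|b_{3G}|,|b_{3G-1}|,|b_{3G-2}|$ are all of the same order while each further drop of~$3$ in the index costs a factor $\asymp 1/g^2$.

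Second, I would analyze the mod-$3$ structure of $P_1,P_2,P_3$: for a partition of $\{1,\dots,n\}$ into $n-2s$ indices $\equiv 1$, $s$ indices $\equiv 0$, and $s$ indices $\equiv 2$ ($s=0$ in the $P_1$ case), the minimal admissible values are $1,0,-1$ respectively. Hence the minimal total mass is $n-3s$ and the ``excess'' $3(G-n/2+s)$ must be distributed subject to $k_i\le 3G-m$; notably, at most $s\le n/2$ of the $k_i$'s can equal $-1$, which rules out ``nearly-all-$-1$'' configurations. By convexity of $f(k)=(2k/3)\log(k/e)$, the maximum of $\prod|b_{k_i}|$ over the feasible set is attained at an extreme point that concentrates the excess on a single ``big'' coordinate, possibly capped at $3G-m$ (or shifted down to the nearest admissible residue, which produces only an $O(1)$ adjustment already absorbed into the floor). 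The ratio estimate above then gives exactly the decay $g^{-[2m/3]}$, while the other coordinates (sitting at their minima, or absorbing a bounded amount of residual excess) contribute an $m$-dependent, $n$-independent multiplicative constant.

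Finally, uniformity in $n$ follows because the small coordinates are pinned near $\{-1,0,1\}$ (where $|b_k|\le 1$), so their product is bounded by $1$ (in fact by $(1/2)^{n-2s-1}$, which only helps). The main obstacle is the technical verification that no alternative extreme point can produce a larger quantity: this is handled by a case analysis organized by $s$ (the number of special pairs in the $P_2,P_3$ decomposition), together with the cyclic invariance of $a_{k_1,\dots,k_n}$ (Proposition~\ref{prop1}) to fix the position of the biggest index before estimating.
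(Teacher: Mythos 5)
Your proposal follows essentially the same route as the paper: reduce via $|a_{k_1,\dots,k_n}|\le 2|b_{k_1}\cdots b_{k_n}|$, push the ``excess'' onto a single largest coordinate using the monotonicity of $|b_{k+3}/b_k|$ (your convexity of $f$), invoke the constraint that at most $n/2$ of the $k_i$ can equal $-1$ to bound the residual left for the second-largest coordinate by an $m$-dependent constant, and finish with the ratio estimate $|b_{3G-m}|/|b_{3G}|\asymp g^{-[2m/3]}$ together with $|b_k|\le 1$ for $k\in\{-1,0,1,2\}$ to get uniformity in $n$; the paper organizes the ``no worse extreme point'' verification by the sign of $\tilde q=m-\frac n2+\tilde s_{-1}-\tilde s_1-2\tilde s_2$ rather than by $s$, but this is the same case analysis.

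One sub-claim in your justification of the ratio estimate is wrong: $|b_{3G}|$, $|b_{3G-1}|$, $|b_{3G-2}|$ are \emph{not} all of the same order. From \eqref{defbk} one computes $|b_{3g-2}|/|b_{3g}|=\frac{12g}{(6g-1)(6g-3)}\asymp\frac1{3g}$, so within each period of three the ratios go roughly $1:1:\frac1{3g}$, and a full period of three costs $\asymp g^{-2}$. This is exactly why the exponent is $\left[\frac{2m}3\right]$; if your stated heuristic were true the exponent would come out as $2\left[\frac m3\right]$, which is strictly smaller for $m\equiv 2\ (\mathrm{mod}\ 3)$. The ratio estimate you actually state and use is correct, so the proof is not broken, but this step should be justified by the computation above rather than by the claimed comparability of the three top values.
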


\begin{lemma} \label{lem1odd0906}
For every $m\geq0$, there exists a constant $C_5=C_5(m)$, such that for $g$ being sufficiently large, 
\beq
\sup_{n\in\mathbb{Z}_{\rm odd}^{\geq3}} 
\max_{\substack{ -1\le k_1,\dots,k_n \le 3g+\frac{3n-1}2-3-m \\ k_1+\cdots+k_n=3g-3+n}} g^{\left[\frac{2m+1}3\right]} \left|\kappa_{k_1,\dots,k_n}(g)\right| \leq C_5.
\eeq
\end{lemma}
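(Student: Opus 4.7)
The plan is to reduce the odd-$n$ estimate to the even-$n$ estimate of Lemma~\ref{lem1even0906} by isolating a distinguished coordinate with residue one mod three. For odd $n$, condition $P_1$ of~\eqref{a} is vacuous, so any non-vanishing $a_{k_1,\dots,k_n}$ falls under $P_2$ or $P_3$; in either case the number $n-2s$ of coordinates with $k_i\equiv 1\pmod{3}$ is itself odd and hence at least one. By the cyclic invariance of $a_{k_1,\dots,k_n}$, I may assume this distinguished coordinate is $k_n = 3h - 2$ with $h \ge 1$, and read off from~\eqref{a} the bound $|a_{k_1,\dots,k_n}| \le 2\,|b_{k_n}|\prod_{i<n} |b_{k_i}|$; the tail product is controlled by exactly the same Stirling-type estimates that drive Lemma~\ref{lem1even0906}.

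Setting $\ell := (n-1)/2$ and $g' := g + 1 - h$, one checks $k_1 + \cdots + k_{n-1} = 3g' - 3 + (n-1)$, so the reduced $(n-1)$-tuple is admissible for the even-$n$ lemma at genus $g'$. A routine rewriting of the prefactor in~\eqref{defikappa} yields
\beq
\kappa_{k_1,\dots,k_n}(g) \;=\; \frac{24\,(g+\ell-1)\,b_{k_n}}{6g+6\ell-5}\cdot R(g,h,\ell)\cdot\widetilde{\kappa}(g'),
\eeq
where $\widetilde{\kappa}(g')$ denotes the even-case $\kappa$-value on $(k_1,\dots,k_{n-1})$ and $R(g,h,\ell)$ is an explicit ratio of double factorials and factorials that Stirling's formula bounds uniformly in $n$ and $h$ on the admissible range. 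The upper-bound constraint $k_n \le 3g + (3n-1)/2 - 3 - m$ forces $h \le g + \ell - (m+1)/3$, and translates the constraint on the surviving $k_i$'s into $k_i \le 3g' + 3(n-1)/2 - 3 - m'$ for some $m' \ge m - 1$, up to floor adjustments.

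Invoking Lemma~\ref{lem1even0906} on the reduced tuple gives $(g')^{[2m'/3]}|\widetilde{\kappa}(g')| \le C_1(m')$, after which it remains to verify that
\beq
g^{[(2m+1)/3]}\cdot|b_{k_n}|\cdot|R(g,h,\ell)|\cdot(g')^{-[2m'/3]}
\eeq
is bounded by an absolute constant $C_5(m)$ uniformly in odd $n \ge 3$ and in $h$. This last inequality is the main obstacle: the difference $[(2m+1)/3] - [2m'/3]$ is at most one, so at worst a single extra factor of $g$ must be absorbed --- either by the Stirling decay of $|b_{k_n}|$ when $h$ is bounded, or by the ratio $R(g,h,\ell)$ together with the cancellation $g \le g' + h$ when $h$ is comparable to $g$. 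Gluing the two regimes, together with the uniform-in-$n$ control inherited from Lemma~\ref{lem1even0906}, yields the desired constant $C_5(m)$.
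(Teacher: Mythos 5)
Your route is genuinely different from the paper's: the paper proves the odd case by repeating the even-case argument verbatim with the top admissible index $3\bigl(g+\frac{n-1}{2}\bigr)-2$ in place of $3\bigl(g+\frac n2-1\bigr)$ (which is exactly where the exponent $\left[\frac{2m+1}{3}\right]$ rather than $\left[\frac{2m}{3}\right]$ comes from), whereas you try to peel off one coordinate $k_n=3h-2\equiv1\ ({\rm mod}\ 3)$ and quote Lemma~\ref{lem1even0906} at the shifted genus $g'=g+1-h$. The parity observation ($P_1$ is vacuous for odd $n$, so $P_2$ or $P_3$ forces an odd, hence positive, number of coordinates $\equiv1\ ({\rm mod}\ 3)$) is correct, and the reduced tuple does satisfy $P_2$/$P_3$ again, so the factorization itself is legitimate. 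The fatal problem is the range of $h$. The distinguished coordinate is in no way tied to the smallest entry: it can be as large as $3g+\frac{3n-1}{2}-3-m$, so $h$ can be of order $g+\frac n2$, and then $g'=g+1-h$ is small or even negative. In that regime Lemma~\ref{lem1even0906} simply cannot be invoked — it is a statement ``for $g$ sufficiently large'' — and the normalizing factor $24^{g'+[\frac{n-1}2]-1}(g'+[\frac{n-1}2]-1)!/(6g'+3n-10)!!$ in \eqref{defikappa} is not even defined for $g'$ negative enough. Your closing sentence (``gluing the two regimes \dots by the ratio $R$ together with the cancellation $g\le g'+h$'') names the danger but supplies no argument for it; yet configurations such as $(k_1,\dots,k_n)=(0,2,3g-2)$ for $n=3$ live precisely there.

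Two quantitative assertions are also wrong as stated. First, the transferred constraint is not ``$m'\ge m-1$'': writing $3g=3g'+3h-3$, the bound $k_i\le 3g+\frac{3n-1}2-3-m$ becomes $k_i\le 3g'+\frac{3(n-1)}2-3-(m-3h+2)$, so $m'=\max(0,\,m-3h+2)$, which degrades by $3$ for every unit increase of $h$ and is $0$ as soon as $h\ge\frac{m+2}{3}$. Consequently the difference $\left[\frac{2m+1}{3}\right]-\left[\frac{2m'}{3}\right]$ is not ``at most one'' but equals $2h-1$; the loss must be recovered from the explicit ratio, which a computation with \eqref{defbk} shows behaves like $\frac{(6h-5)!!}{2\,(h-1)!\,6^{3h-2}}\,(g+\tfrac{n-1}2)^{-(2h-1)}$. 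The powers of $g$ do match, but the $h$-dependent prefactor grows superexponentially in $h$, so uniformity over the full admissible range of $h$ is exactly the point that needs proof and is missing. The clean fix is to abandon the reduction and argue as the paper does: order the $k_i$, push the product $|b_{k_1}\cdots b_{k_n}|$ onto the single index $3g+\frac{3n-1}{2}-3-m$ using the monotonicity of $|b_{k+3}/b_k|$, and then compare with $\bigl|b_{3g+\frac{3n-1}2-3}\bigr|=\frac12\,\frac{(6g+3n-8)!!}{24^{g+\frac{n-1}2-1}(g+\frac{n-1}2-1)!}$, which is the exact reciprocal (up to the factor $\frac12$) of the normalization in \eqref{defikappa} for odd $n$.
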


\begin{lemma} \label{lem2} 
For every positive real number~$C$, 
there exists a constant $C_6=C_6(C)$, such that for $g$ being sufficiently large,
\begin{equation}
\max_{n\in\mathbb{Z}^{\geq 2} \atop n\leq C\log(g)} \max_{1\leq m\leq n}\max\limits_{\substack{k_1,\dots,k_n\geq -1 \\ k_1+\cdots+k_n=3g+n-3 \\ 
		{\rm card} \{ i \mid k_i\geq 1\}\ge m}} \left|\kappa_{k_1,\dots,k_n}(g)\right| \frac{g^{m-3}}
{14^n} \prod_{j=1}^n (k_j+2)^2 \leq C_6.
\end{equation} 
\end{lemma}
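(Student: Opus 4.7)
The plan is to estimate each ingredient of $\kappa_{k_1,\ldots,k_n}(g)=\mathcal{R}_n\cdot a_{k_1,\ldots,k_n}$ via Stirling's formula and then to combine against the weight $g^{m-3}\prod_j(k_j+2)^2/14^n$, where $\mathcal{R}_n:=24^{g+[n/2]-1}(g+[n/2]-1)!/(6g+2[3n/2]-7)!!$. Since $|a_{k_1,\ldots,k_n}|\leq 2\prod_j|b_{k_j}|$ by~\eqref{a} and~\eqref{defia}, the whole estimate reduces to bounding $\mathcal{R}_n\prod_j|b_{k_j}|(k_j+2)^2$.

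First, treating each residue class $k\equiv 0,-1,-2\pmod 3$ separately in~\eqref{defbk} and applying Stirling to the double factorials and factorials, I would derive the uniform bound $|b_k|\leq C(k/e)^{2k/3}/(k+1)^{1/2}$ for $k\geq 3$, with explicit small values at $k\in\{-1,0,1,2\}$. Multiplying by $(k+2)^2$ then gives $|b_k|(k+2)^2\leq 14$ for $k\in\{-1,0,1,2\}$ (with equality exactly at $k=2$, explaining the $14^n$ factor in the statement), and $|b_k|(k+2)^2\leq C(k+1)^{3/2}(k/e)^{2k/3}$ for $k\geq 3$. A parallel Stirling estimate on $\mathcal{R}_n$ yields $\mathcal{R}_n\leq C\sqrt{g}\,(e/[3(g+[n/2]-1)])^{2(g+[n/2]-1)}$.

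Next, partition $\{1,\ldots,n\}=S\sqcup T$ by $T:=\{j:k_j\leq 2\}$ and $S:=\{j:k_j\geq 3\}$; then $\prod_{j\in T}|b_{k_j}|(k_j+2)^2\leq 14^{|T|}$, so $14^n$ is reduced to a residual factor $14^{-|S|}$. On $S$ I would apply the Stirling bound and note that, by convexity of $k\mapsto k\log k$ subject to $\sum_{j\in S}k_j\leq 3g+n-3+|T|$ and $k_j\geq 3$, the product $\prod_{j\in S}(k_j/e)^{2k_j/3}$ is maximized when a single index $j^*$ absorbs all but the minimal mass, the others taking the value $3$. Crucially, the vanishing condition for $a_{k_1,\ldots,k_n}$ from~\eqref{a}---namely that the number of $k_j\equiv 0\pmod 3$ equals the number $\equiv 2\pmod 3$---rules out many na\"ively extremal configurations (for example, one huge $k_{j^*}$ with many $k_j=-1$ is only admissible when $n-m\leq 1$ and $k_{j^*}\equiv 0\pmod 3$). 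Matching Stirling exponents between $\mathcal{R}_n$ and $(k_{j^*}/e)^{2k_{j^*}/3}$ in each surviving extremal configuration, the leading exponentials cancel and leave only polynomial remainders in $g$, which the weight $g^{m-3}\cdot 14^{-|S|}$ absorbs together with the hypothesis $n\leq C\log g$.

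The main obstacle is this final exponent-matching step in tandem with the $a\neq 0$ restriction: one must track second-order Stirling corrections such as $\log(g+c)=\log g+c/g+O(1/g^2)$ uniformly in $n\leq C\log g$, and simultaneously verify, via a case analysis on the residue pattern of $(k_1,\ldots,k_n)$ modulo $3$, that no admissible configuration defeats the bound. A subsidiary complication is the parity dependence introduced by the floors $[n/2]$ and $[3n/2]$ in $\mathcal{R}_n$, forcing a separate treatment of $n$ even and $n$ odd.
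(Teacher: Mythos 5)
Your overall strategy is the same as the paper's: bound $|a_{k_1,\dots,k_n}|$ by $2\prod_j|b_{k_j}|$, work with the weighted sequence $c_k:=b_k(k+2)^2$, observe that $|c_{-1}|,|c_0|,|c_1|,|c_2|$ are all $\le 14$ with the maximum at $k=2$ (whence the $14^n$), use a step-$3$ log-convexity of $(|c_k|)$ to push all the mass onto a single large index, and invoke the residue pattern forced by $a_{k_1,\dots,k_n}\neq0$ to constrain the extremal configurations. The paper, however, never touches Stirling: it uses only exact monotonicity facts ($(|c_k|)_{k\ge0}$ strictly increasing, $|c_{k+\ell+3}/c_{k+\ell}|\ge|c_{k+3}/c_k|$ for $\ell\ge2$, $|c_{k+2}/c_k|>|c_3/c_2|^2$ for $k$ large), which makes the uniformity in $n\le C\log g$ and the parity of $n$ a non-issue and disposes of exactly the ``second-order Stirling corrections'' you flag as the main obstacle. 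A small but real point you gloss over: $|c_3|=125/8>14$, so indices with $k_j=3$ cannot be charged to the $14$-per-index budget; the paper trades each $c_3$ for a $c_2$ at the cost of lowering the dominant index by one and a global factor $|c_3/c_2|<1.2$.

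The genuine gap is in how the factor $g^{m-3}$ is controlled. For $m\ge4$ this weight is \emph{large} and must be defeated, yet you write that it ``absorbs'' the polynomial remainders, which inverts its role. The mechanism in the paper is quantitative: the hypothesis ${\rm card}\{i\mid k_i\ge1\}\ge m$ gives $s_1+s_2+s_{03}\ge m-1$ (in the paper's notation), and the condition $a\neq0$ forces $\#\{k_i\equiv2\}=\#\{k_i\equiv0\}$, hence $s_{-1}+s_2\le\left[(n-s_1)/2\right]$; together these cap the dominant index at $3g-3+\left[\tfrac{3n}2-\tfrac32(m-1)\right]$, i.e.\ push it down by about $\tfrac32(m-1)$ below its unconstrained maximum $3g+\tfrac{3n}2-3$. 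Since lowering the index by $3$ costs a factor $\asymp g^{2}$ in $|b|$, this yields $|c_{k_1}\cdots c_{k_n}|\lesssim 14^{n}\,|c_{3g+\frac{3n}2-3}|\,g^{-(m-1)}$, and then $g^{m-3}\cdot g^{-(m-1)}=g^{-2}$ is exactly what is needed to pass from $|c_{3g+\frac{3n}{2}-3}|$ back to $|b_{3g+\frac{3n}{2}-3}|$ and the normalizing double factorial. Your proposal only says the constraint ``rules out many na\"ively extremal configurations'' and never extracts this count, so the decisive cancellation of $g^{m-3}$ is asserted rather than proved; as written, the argument does not close for $m$ growing with $n$.
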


\begin{lemma}\label{lem3}
For every positive real number~$C$ and 
every $m\in\ZZ^{\ge1}$, there exists a constant $C_8=C_8(C,m)$, such that for $g$ being sufficiently large,
\begin{equation}\label{ineqlemma5}
\max_{n\in\mathbb{Z}_{\rm even}^{\ge2} \atop n\leq C\log(g)} 
\max\limits_{\substack{k_1,\dots,k_n\ge-1\\k_1,\dots,k_n\leq 3g+\frac{3n}2-3-m\\ k_1+\cdots+k_n=3g+n-3}} 
\left|\kappa_{k_1,\dots,k_n}(g)\right|\frac{g^{\left[\frac23 m\right]-2}}{2^n}\prod_{j=1}^n (k_j+2)^2 \leq C_8.
\end{equation} 
\end{lemma}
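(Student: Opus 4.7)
My plan is to adapt the strategy of Lemma~\ref{lem1even0906} to incorporate two new features appearing in~\eqref{ineqlemma5}: the polynomial weight $\prod_j(k_j+2)^2$ and the exponential denominator $2^n$. The starting point is the closed-form identity
\begin{equation*}
|\kappa_{k_1,\dots,k_n}(g)|=\frac{|a_{k_1,\dots,k_n}|}{|b_{3G}|}, \qquad G:=g+\tfrac{n}{2}-1,
\end{equation*}
valid for even $n$, which follows from~\eqref{defikappa} upon noting that $(6g+3n-7)!!=(6G-1)!!=24^G G!\,|b_{3G}|$ by~\eqref{defbk}. Combined with the elementary bound $|a_{k_1,\dots,k_n}|\le 2\prod_j|b_{k_j}|$ coming from~\eqref{a}, inequality~\eqref{ineqlemma5} reduces to showing
\begin{equation*}
\frac{g^{[2m/3]-2}}{|b_{3G}|}\prod_{j=1}^n\frac{|b_{k_j}|(k_j+2)^2}{2}\le C
\end{equation*}
uniformly over admissible configurations, for some absolute constant $C$ depending on $m$.

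Second, I would use Stirling's formula to establish two-sided estimates $|b_k|\asymp g_k^{-1/2}(3g_k/e)^{2g_k}$, where $g_k:=\lfloor(k+2)/3\rfloor$, together with an explicit handling of the small-$g_k$ cases. I would then split the indices into a spike set $S=\{j: k_j>g^{1/2}\}$ and a bulk set $B=\{1,\dots,n\}\setminus S$. The sum constraint $\sum k_j=3g+n-3$ and the admissibility bound $k_j\le 3g+3n/2-3-m$ ensure that $|S|$ is small, and typically equal to one. On the bulk part, the Stirling estimates imply a uniform tail bound $\sum_{k\ge-1}|b_k|(k+2)^2 R^{-k}<\infty$ for a suitable $R$, so that $\prod_{j\in B}|b_{k_j}|(k_j+2)^2/2$ is dominated by a factor exponentially small in $|B|$, which is absorbed by the denominator $|b_{3G}|$. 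On the spike part, the condition $k_j\le 3g+3n/2-3-m$ together with Stirling yields $|b_{k_j}|/|b_{3G}|\le C g^{-[2m/3]}$ for the single spike, while $(k_j+2)^2\le C g^2$ provides the complementary factor, giving the required $g^{2-[2m/3]}$.

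The main obstacle I anticipate is controlling the intermediate regime where several $k_j$'s are simultaneously of moderate size, so that neither the pure bulk nor the single-spike analysis applies directly. The idea is to exploit the log-convexity of $g\mapsto 2g\log(3g)-2g$: distributing the total mass $\sum k_j\approx 3g$ across multiple moderately large indices strictly decreases $\prod_j|b_{k_j}|/|b_{3G}|$ compared to concentrating it on one spike, and this loss should compensate for the enlargement of $\prod_j(k_j+2)^2$. Turning this heuristic into a quantitative uniform bound, while keeping the restriction $n\le C\log g$ in mind to control the number of admissible shapes, is the technical heart of the argument. A subsidiary refinement, if needed, is to exploit the mod-$3$-residue structure encoded in $P_1,P_2,P_3$ of~\eqref{a}, which forces the sum to vanish outside certain sparse patterns and would yield an additional combinatorial saving that comfortably beats the factor $2^n$.
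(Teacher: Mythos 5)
Your reduction of \eqref{ineqlemma5} to a bound on $g^{[2m/3]-2}\,|b_{3G}|^{-1}\prod_j |b_{k_j}|(k_j+2)^2/2$ with $G=g+n/2-1$ is correct and matches the paper's starting point (the paper writes this via $|a_{k_1,\dots,k_n}|\le 2|b_{k_1}\cdots b_{k_n}|$ and the weighted sequence $c_k:=b_k(k+2)^2$). The single-spike picture is also the right one. But the proposal has a genuine gap precisely where you flag ``the technical heart'': you never actually prove that spreading the total mass $\sum_j k_j=3g+n-3$ over several moderately large indices loses enough relative to the one-spike configuration to beat the weight $\prod_j(k_j+2)^2$, uniformly over all admissible $(k_1,\dots,k_n)$ with $n\le C\log g$. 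The paper closes exactly this gap by an explicit rearrangement argument: it verifies that $(|c_k|)_{k\ge0}$ is strictly increasing and that $|c_{k+\ell+3}/c_{k+\ell}|\ge|c_{k+3}/c_k|$ for $\ell\ge2$ (the quantitative form of your log-convexity heuristic, along arithmetic progressions mod $3$), sorts so that $k_n\ge k_{n-1}\ge\max\{k_1,\dots,k_{n-2}\}$, and pushes all the mass onto $k_n$ at the cost of factors $|c_{-1}|,|c_0|,|c_1|,|c_2|$ bounded by $14$; the non-vanishing of $a_{k_1,\dots,k_n}$ (conditions $P_1$--$P_3$) then forces $\tilde s_{-1}\le n/2$, which is what controls the accumulated constant against the allowed $2^n$. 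Without some version of this uniform comparison your argument does not go through; in particular the restriction $n\le C\log g$ alone does not ``control the number of admissible shapes'' in any way that substitutes for it.

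A secondary but concrete error: the claimed ``uniform tail bound $\sum_{k\ge-1}|b_k|(k+2)^2R^{-k}<\infty$'' is false for every $R$, since $|b_{3g}|=(6g-1)!!/(24^g g!)$ grows superexponentially (roughly like $9^g(g/e)^{2g}$), so no geometric weight makes this sum converge. The bulk product $\prod_{j\in B}|b_{k_j}|(k_j+2)^2$ is not small in absolute terms and is not ``absorbed by $|b_{3G}|$'' on its own; it is only the \emph{exchange} of mass between bulk and spike, i.e.\ the ratio-monotonicity of $|c_{k+3}/c_k|$, that produces the saving. Replacing the Stirling/tail-sum step by the paper's monotonicity of $|c_k|$ and of the ratios $|c_{k+3}/c_k|$ would both fix this error and supply the missing uniform comparison.
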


\begin{lemma}\label{lem3'}
For every positive real number~$C$ and every $m\in\ZZ^{\ge 0}$, there exists a constant $C_{11}=C_{11}(C, m)$,  
such that for $g$ being sufficiently large,
\begin{equation}
\max_{n\in\mathbb{Z}_{\rm odd}^{\ge3} \atop n\le C\log(g)} \! 
\max\limits_{\substack{k_1,\dots,k_n\ge-1\\k_1,\dots,k_n\leq 3g+\frac{3n-1}2-3-m\\ k_1+\cdots+k_n=3g+n-3}} \!\!\!\!\!
 \tfrac{\left|\kappa_{k_1,\dots,k_n}(g)\right| \prod_{j=1}^n (k_j+2)^2}{2^n}  \, g^{\left[\frac{2m+1}3\right]-2} \leq C_{11}.
\end{equation} 
\end{lemma}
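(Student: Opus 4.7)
The plan is to mirror the proof of Lemma~\ref{lem3}, the even-$n$ analogue, replacing Lemma~\ref{lem1even0906} by Lemma~\ref{lem1odd0906} at every invocation. The two statements differ only in (a) the parity of $n$, which changes the normalization $[n/2] = (n-1)/2$ versus $n/2$ and $[3n/2] = (3n-1)/2$ versus $3n/2$ appearing in~\eqref{defikappa}, so that the double factorial in the denominator becomes $(6g+3n-8)!!$ in place of $(6g+3n-7)!!$; and (b) the floor function $[(2m+1)/3]$ versus $[2m/3]$ in the $g$-exponent, which differs by at most one unit. The combinatorial structure of the argument therefore transfers essentially verbatim, with only minor arithmetic adjustments to track the small parity differences.

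The key analytical ingredient, independent of parity, is an elementary ratio bound of the form $(k+2)^2|b_k| \leq c_0 |b_{k+3}|$, valid uniformly in $k \geq -1$ for an absolute constant $c_0$; this follows from~\eqref{defbk} by a direct case analysis modulo~$3$, in which each residue class satisfies $|b_{k+3}|/|b_k| \sim 9(k/3)^2$ asymptotically and $(k+2)^2 \sim 9(k/3)^2$. Since the parity conditions $P_1,P_2,P_3$ in~\eqref{a} are invariant under $k_j \mapsto k_j + 3$, the trace formula~\eqref{defia} then yields a pointwise bound of the shape $|a_{k_1,\dots,k_n}|\prod_{j=1}^n (k_j+2)^2 \leq C \cdot 2^n |a_{k'_1,\dots,k'_n}|$, where the $k'_j$ are obtained from the $k_j$ by shifting the ``large'' indices (those with $k_j$ at scale $g$) by $+3$ and using the trivial bound $(k_j+2)^2 = O(1)$ on the remaining ``small'' indices.

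With this in hand, the proof concludes by applying Lemma~\ref{lem1odd0906} to the shifted tuple (which satisfies its upper-bound hypothesis at a slightly larger effective genus and the same parameter~$m$), then comparing the prefactors in~\eqref{defikappa} via a Stirling-type calculation. The powers of $g$ balance thanks to the elementary floor-function identity $[(2(m+3L-3)+1)/3] = 2L + [(2m+1)/3] - 2$ (with $L$ the number of shifted indices). The principal technical obstacle is the careful choice of the threshold separating ``large'' from ``small'' indices: chosen too low, the constant in the shift step exceeds $2^n$; chosen too high, the trivial bound on the small indices becomes too loose. Once this threshold is optimized against the sum constraint $k_1+\cdots+k_n = 3g+n-3$ together with the hypothesis $k_j \leq 3g + (3n-1)/2 - 3 - m$, and the residual $O(1)^n$ factors are absorbed using the slack afforded by $n \leq C\log g$, the desired estimate follows as in the even case, yielding $C_{11}(C,m)$ depending only on $C$ and~$m$.
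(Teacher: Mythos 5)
Your opening plan --- mirror the proof of Lemma~\ref{lem3} with the odd-parity bookkeeping ($[n/2]=(n-1)/2$ and $[3n/2]=(3n-1)/2$ in~\eqref{defikappa}, the exponent $[(2m+1)/3]$ in place of $[2m/3]$) --- is exactly what the paper does: its proof of this lemma is literally the one sentence that it is similar to that of Lemma~\ref{lem3}. The difficulty is that the mechanism you then describe for carrying this out is \emph{not} the mechanism behind Lemma~\ref{lem3}, and it has a genuine gap. The paper's argument (see the appendix proofs of Lemmas~\ref{lem1even0906} and~\ref{lem3}) works with $c_k:=b_k(k+2)^2$ and telescopes the ratios $c_k/c_{k-3}$ of \emph{every} index onto the single largest index, using that $(|c_k|)_{k\ge0}$ is increasing and that $|c_{k+\ell+3}/c_{k+\ell}|\ge |c_{k+3}/c_k|$; the residue factors left behind are harmless because the nonvanishing of $a_{k_1,\dots,k_n}$ together with the cap $k_j\le 3g+\tfrac{3n-1}2-3-m$ forces all but $O_m(1)$ of the indices to sit in residue classes whose representative $|c_j|$ is at most $1$ (up to the $2^n$ coming from $|b_k|\le 2|b_{k+1}|$). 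There is no large/small threshold anywhere, and that is not an accident.

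The gap in your version is the threshold step, and it cannot be repaired by "optimizing" the cutoff. If "large" means $k_j\ge K_0$ for an absolute constant $K_0$, then the number $L$ of shifted indices can be as large as $n$ (take all $k_j\approx 3g/n$ with $k_j\equiv 1\ (\mathrm{mod}\ 3)$, which is compatible with $a_{k_1,\dots,k_n}\ne 0$); then the parameter you must feed into Lemma~\ref{lem1odd0906} is $m'=m+3L-3=m+3n-3$, so the constant $C_5(m')$ and the implicit threshold "$g$ sufficiently large" are no longer uniform in $n$, destroying the uniformity over $n\le C\log(g)$ that the lemma asserts; moreover, comparing the normalizations in~\eqref{defikappa} at genera $g$ and $g+L$ leaves an unrecovered factor of roughly $\bigl((6g)^{3}/(24g)\bigr)^{L}\cdot (g+L)^{-2L}\approx 9^{L}$, and $9^{n}$ is not $O(2^{n})$. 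If instead the threshold sits at scale $g$ so that $L=O(1)$, the "trivial bound" $\prod_{j\ \mathrm{small}}(k_j+2)^2$ can be as large as $g^{2(n-L)}$, again not $O(2^n)$. Note that even in the most favourable configuration every index with $k_j\equiv 1\ (\mathrm{mod}\ 3)$ satisfies $(k_j+2)^2\ge 9$, so a termwise trivial bound on the small indices already costs at least $9^{\,n-O_m(1)}$: the factor $\prod_j(k_j+2)^2$ must be played off against the decay of $\prod_j |b_{k_j}|$ index by index, which is precisely what the telescoping on $c_k$ achieves and what a large/small split discards. The fix is to abandon the shift-to-higher-genus reduction and instead transcribe the proof of Lemma~\ref{lem3} verbatim, adjusting only the parity-dependent quantities and invoking the analogue of~\eqref{36} with the exponent $[(2m+1)/3]$.
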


We note that Lemma~\ref{lem1even0906} and Lemma~\ref{lem1odd0906} provide estimates for $\left|\kappa_{k_1,\dots,k_n}(g)\right|$ for $n$ even and $n$ odd respectively. Lemma~\ref{lem3} is similar to Lemma~\ref{lem1even0906}, and Lemma~\ref{lem3'} is similar to Lemma~\ref{lem1odd0906}.  These lemmas will be 
used in  
the proofs of Lemmas \ref{lem6}, \ref{lem7} (see below). 
Let us now prove the following important lemma.
\begin{lemma}\label{lem6}
For every positive real number $C$,
\begin{equation}\label{lemma7ineq}
\lim\limits_{g\to+\infty}\max_{2\leq n\leq C\log(g)}\max_{d_1,\dots,d_n\ge 0 \atop d_1+\cdots+d_n=3g+n-3 }\left|\sum_{\sigma\in S_{n,1}^{\{2\}}}\gamma_{\underline{d},\sigma}(g)-1\right|=0. 
\end{equation}
\end{lemma}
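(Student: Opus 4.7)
The plan is to decompose each $\gamma_{\underline{d},\sigma}(g)$ for $\sigma\in S_{n,1}^{(r,n-r)}$ into a dominant ``peak'' contribution plus a remainder, and to show that the peaks sum over $S_{n,1}^{\{2\}}$ to $1+o(1)$ while the remainders vanish uniformly in $\underline{d}$. First, using \eqref{defikappa} to convert each $a_{\cdot}$ into the corresponding $\kappa_{\cdot}$, I would rewrite \eqref{defepsilon} as a $(g,n)$-dependent prefactor times $\sum_{\underline{j}\in(\ZZ^{\geq0})^n}\kappa_{K_{\underline{d},\sigma,1}(\underline{j}),\ldots,K_{\underline{d},\sigma,n}(\underline{j})}(g)$, whose sign is $(-1)^{n-r+1}$ because $m(\sigma)=n-r$ for unimodal~$\sigma$. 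From the explicit shape of $J_{\sigma,q}$ in \eqref{defjsi}, only the ``peak'' index $K_{\underline{d},\sigma,r+1}(\underline{j})=d_n+j_r+j_{r+1}+1$ can reach a value within $O(n)$ of the maximum $3g+2n-4$; every non-peak $K_{\underline{d},\sigma,q}$ equals $d_{\sigma(q)}$ shifted by a telescoping difference of two consecutive $j$-indices.

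\textbf{Main contribution.} I would isolate the summands with all non-peak $K_{\underline{d},\sigma,q}$ exactly equal to $d_{\sigma(q)}$ (so that the peak $K$ absorbs the entire $3g$-growth). Direct evaluation of this single $\kappa$-value via \eqref{defbk} together with the mod-$3$ case structure in \eqref{a} should reveal a cancellation between the sign $(-1)^{n-r+1}$ and the residue-sign factor coming from $P_2$ or $P_3$, leaving a clean residual that only depends on $(n,\underline{d})$ through the total degree $3g+n-3$. Summing these main terms over $\sigma\in S_{n,1}^{\{2\}}$ via the binomial identity $\sum_{r=1}^{n-1}\binom{n-2}{r-1}=2^{n-2}$ (cf.~\eqref{numberofeleinsn1}), together with a compensating identity that tracks the peak position $r+1$ and the distribution of $d_1,\ldots,d_{n-1}$ between the ascending and descending branches, should produce a limit equal to exactly~$1$.

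\textbf{Remainder and obstacle.} The remainder consists of those $\underline{j}$ where at least one non-peak $K_{\underline{d},\sigma,q}$ differs from $d_{\sigma(q)}$, so that all $K$-indices fall into the sub-maximal range covered by Lemma~\ref{lem3} for $n$ even and Lemma~\ref{lem3'} for $n$ odd. These give each summand a bound of order $C\cdot 2^n/\bigl(g^{[2m/3]-2}\prod_j(k_j+2)^2\bigr)$ (or its $[(2m+1)/3]$ analog). Absolute convergence of $\sum_k (k+2)^{-2}$ in each $j_q$, the combinatorial count ${\rm card}\,S_{n,1}^{\{2\}}=2^{n-2}$, and the growth constraint $n\leq C\log g$ then combine to yield a remainder of size $(\log g)^{O(1)}/g^{[2m/3]-2}$, which is $o(1)$ once $m$ is taken sufficiently large. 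The hardest step is the combinatorial identity in the main term: matching the unimodal sign $(-1)^{n-r+1}$ with the internal residue-sign $(-1)^{\sum(i_{2j+1}-i_{2j}-1)}$ from $P_2$ (and its analog in $P_3$), and summing over all possible distributions of $d_1,\ldots,d_{n-1}$ across the ascending and descending branches of the unimodal permutation, in order to show that the leading total is exactly~$1$.
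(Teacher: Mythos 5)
Your high-level intuition is right (each unimodal $\sigma$ should contribute roughly $2^{-(n-2)}$, the $2^{n-2}$ of them sum to $1$, and the bulk is controlled by the $\prod_j(k_j+2)^{-2}$ estimates of Lemmas~\ref{lem3}, \ref{lem3'} together with ${\rm card}\,S_{n,1}^{\{2\}}=2^{n-2}$ and $n\le C\log g$), but the decomposition you propose does not work. The set of $\underline{j}$ with all non-peak $K_{\underline{d},\sigma,q}(\underline{j})$ equal to $d_{\sigma(q)}$ is essentially empty: since the $J$-differences telescope, $\sum_q K_{\underline{d},\sigma,q}(\underline{j})=\sum_q d_q$ identically, so the peak cannot ``absorb the entire $3g$-growth'' while the other entries sit at $d_{\sigma(q)}$; moreover the anti-peak $K_{\underline{d},\sigma,1}(\underline{j})=d_1-j_1-j_n-1$ is always strictly less than $d_1$. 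The actual dominant contribution comes from $\underline{j}$ for which $(K_{\underline{d},\sigma,1},\dots,K_{\underline{d},\sigma,n})$ is one of the concentrated patterns $\bigl(3g+\tfrac{3n}2-3,-1,0,\dots,-1,0,-1\bigr)$ and its three variants (forced by the mod-$3$ structure of \eqref{a} and the superexponential growth of $|b_k|$), in which the non-peak entries are $-1$'s and $0$'s, \emph{not} $d_{\sigma(q)}$'s. These configurations lie in your ``remainder,'' where your appeal to Lemma~\ref{lem3} fails outright: that lemma requires all $k_j\le 3g+\tfrac{3n}2-3-m$ with $m\ge1$, and here one entry attains the maximal value $3g+\tfrac{3n}2-3$.

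The paper instead stratifies by the size of $\max_q K_{\underline{d},\sigma,q}(\underline{j})$ into three ranges: a top range ($\max\ge 3g+\tfrac{3n}2-5$), which is enumerated explicitly via the four patterns and the solution counts (A), (B), yielding exactly $\frac{2^{n-2}(6g+3n-7)!!}{24^{g+n/2-1}(g+n/2-1)!}\bigl(1+\frac1{6g+3n-7}\bigr)$ plus an $O(1/g)$ correction; an intermediate range ($3g+\tfrac{3n}2-D+1\le\max\le 3g+\tfrac{3n}2-6$), bounded by counting solutions ($d_1A(f,n)+B(f,n)$ of them) times the pointwise bound of Lemma~\ref{lem1even0906}; and the bulk ($\max\le 3g+\tfrac{3n}2-D$), which is where the $\prod_j(k_j+2)^{-2}$ summation you describe is actually used. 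Your two-way split misses the intermediate range entirely (the pointwise Lemma~\ref{lem1even0906} bound alone is not summable there without the solution-counting input), and misidentifies where the limit value $1$ comes from. To repair the argument you would need to replace your ``diagonal'' main term by the near-maximal stratum and add the intermediate stratum, which is precisely the paper's proof.
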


\begin{proof}
Consider the case when $n$ is {\it even}. Take $D=8+[2C]$. By using~\eqref{defepsilon} and the triangle inequality we have
\begin{align}\label{sum}
&\left|\sum_{\sigma\in S_{n,1}^{\{2\}}}\gamma_{\underline{d},\sigma}(g)-1\right|\\
& \leq \left|\frac{24^g g!}{(6g+2n-5)!!}\sum_{\sigma\in S_{n,1}^{\{2\}}}(-1)^{m(\sigma)+1}\sum_{\underline{j} \in V^{3g+\frac{3n}2-D}_{\underline{d},\sigma,-1}}a_{K_{\underline{d},\sigma,1}(\underline{j}),\dots,K_{\underline{d},\sigma,n}(\underline{j})}\right|\nn\\ 
&+\left|\frac{24^gg!}{(6g+2n-5)!!}\sum_{\sigma\in S_{n,1}^{\{2\}}}(-1)^{m(\sigma)+1}\sum_{\underline{j} \in V^{3g+\frac{3n}2-6}_{\underline{d},\sigma,3g+\frac{3n}2-D+1}}a_{K_{\underline{d},\sigma,1}(\underline{j}),\dots,K_{\underline{d},\sigma,n}(\underline{j})}\right|\nn\\
&+\left|\frac{24^gg!}{(6g+2n-5)!!}\sum_{\sigma\in S_{n,1}^{\{2\}}}(-1)^{m(\sigma)+1}\sum_{\underline{j} \in V^{3g+\frac{3n}2-3}_{\underline{d},\sigma,3g+\frac{3n}2-5}}a_{K_{\underline{d},\sigma,1}(\underline{j}),\dots,K_{\underline{d},\sigma,n}(\underline{j})}-1\right|.\nn
\end{align}
In the above formulae we applied $\underline{j}\in V_{\underline{d},\sigma,-1}^{3g+\frac{3n}{2}-3}$, due to the following facts:
\begin{align}%\label{sumK}
K_{\underline{d},\sigma,1}(\underline{j})+\cdots+K_{\underline{d},\sigma,n}(\underline{j})=d_1+\cdots+d_n=3g+n-3\nn
\end{align} and $a_{K_{\underline{d},\sigma,1}(\underline{j}),\dots,{K_{\underline{d},\sigma,n}(\underline{j})}}$ vanishes when $K_{\underline{d},\sigma,q}(\underline{j})=K_{\underline{d},\sigma,q+1}(\underline{j})=-1$ for some $1\leq q\leq n-1$ or $K_{\underline{d},\sigma,n}(\underline{j})=K_{\underline{d},\sigma,1}(\underline{j})=-1$.
Let us start with estimating the first term on the right-hand side of~\eqref{sum}. 
For each $\sigma\in S_{n,1}^{\{2\}}$, say, $\sigma\in S_{n,1}^{(r,n-r)}$, we have
\begin{align}
& \left|\sum_{\underline{j}\in V_{\underline{d},\sigma,-1}^{3g+\frac{3n}2-D}} a_{K_{\underline{d},\sigma,1}(\underline{j}),\dots,K_{\underline{d},\sigma,n}(\underline{j})} \right| \label{lemma8formula1} \\
& \leq \frac{(6g+3n-7)!!}{24^{g+\frac{n}2-1}\left(g+\frac{n}2-1\right)!} \sum_{\underline{j}\in V_{\underline{d},\sigma,-1}^{3g+\frac{3n}2-D}} \left|\kappa_{K_{\underline{d},\sigma,1}(\underline{j}),\dots,K_{\underline{d},\sigma,n}(\underline{j})}\right|\nn\\
& \leq \frac{C_8 \,(6g+3n-7)!!}{24^{g+\frac{n}2-1}\left(g+\frac{n}2-1\right)!} \, \frac{2^n}{g^{\left[\frac23(D-3)\right]-2}}\sum_{\underline{j}\in V_{\underline{d},\sigma,-1}^{3g+\frac{3n}2-D}} \frac{1}{\prod_{q=1}^n(K_{\underline{d},\sigma,q}(\underline{j})+2)^2} \nn\\
& \leq \frac{C_8 \,(6g+3n-7)!!}{24^{g+\frac{n}2-1}\left(g+\frac{n}2-1\right)!} \, \frac{2^n}{g^{\left[\frac23(D-3)\right]-2}} \nn\\
& \qquad \times \sum_{\underline{j}\in V_{\underline{d},\sigma,-1}^{3g+\frac{3n}2-D}} \frac{(K_{\underline{d},\sigma,r+1}(\underline{j})+2)^2}{\prod_{q=1}^n(K_{\underline{d},\sigma,q}(\underline{j})+2)^2} \frac{1}{(j_r+3)^2} \nn\\
& \leq \frac{C_8 \,(6g+3n-7)!!}{24^{g+\frac{n}2-1}\left(g+\frac{n}2-1\right)!} \, \frac{2^n}{g^{\left[\frac23 (D-3)\right]-2}}\left(\frac{\pi^2}6\right)^n. \nn
\end{align}
Here, in the first inequality we substituted~\eqref{defikappa},
in the second inequality we used the $m=D-3$ case of~\eqref{ineqlemma5} of Lemma~\ref{lem3} where 
we recall that $C_8=C_8(C,D-3)$, in the third inequality we used the fact that 
$$K_{\underline{d},\sigma,r+1}(\underline{j})+2=d_n+j_r+j_{r+1}+3\geq j_r+3,$$
and in the fourth inequality we used~\eqref{defK927}.
Therefore, by using~\eqref{numberofeleinsn1} we have 
\begin{align}
& \left|\sum_{\sigma\in S_{n,1}^{\{2\}}}(-1)^{m(\sigma)+1} \sum_{\underline{j}\in V_{\underline{d},\sigma,-1}^{3g+\frac{3n}2-D}} a_{K_{\underline{d},\sigma,1}(\underline{j}),\dots,K_{\underline{d},\sigma,n}(\underline{j})} \right| \label{lemma8formula11010} \\
& \leq \frac{C_8(C,D-3) \,(6g+3n-7)!!}{24^{g+\frac{n}2-1}\left(g+\frac{n}2-1\right)!} \, \frac{2^{2n-2}}{g^{\left[\frac23 (D-3)\right]-2}}\left(\frac{\pi^2}6\right)^n.\nn
\end{align}

Before proceeding, we mention that the following two combinatorial 
statements would be helpful:
\begin{itemize}
\item[(A)]  Given $k_1,\dots,k_n\geq -1$ satisfying $k_1+\cdots+k_n=3g+n-3$.
Equations 
\[
K_{\underline{d},\sigma,q}(\underline{j})=k_q,\quad q=1,\dots,n
\]
for $\underline{j} \in\left(\ZZ^{\ge0}\right)^n$
have solutions only if $k_{r+1}\geq 1$, and have at most $k_{r+1}$ solutions. 
Here we remind the reader that $\sigma\in S_{n,1}^{(r,n-r)}$. 
\item[(B)] Given $m\ge0$, $k_1,\dots,k_n\geq -1$ 
satisfying $k_1+\cdots+k_n=3g+n-3$ and 
$$\sum_{1\le q\le n\atop k_q\ge1} k_q=m+k_{r+1}.$$
Equations
\[
K_{\underline{d},\sigma,q}(\underline{j})=k_q,\quad q=1,\dots,n
\]
for $\underline{j}\in\left(\ZZ^{\ge0}\right)^n$
have at least $d_{1}-2m-k_1$ solutions,
and have at most $d_{1}-k_1$ solutions.
\end{itemize} 

We now estimate the second term on the right-hand side of~\eqref{sum}. We have
\begin{align}
	\sum_{\underline{j} \in V^{3g+\frac{3n}2-6}_{\underline{d},\sigma,3g+\frac{3n}2-D+1}}  \!\!\!\!\!\!\!\!\!
	a_{K_{\underline{d},\sigma,1(\underline{j})},\dots,K_{\underline{d},\sigma,1(\underline{j})}} 
	= \sum_{f=6}^{D-1} \! \sum_{\underline{j} \in V^{3g+\frac{3n}2-f}_{\underline{d},\sigma,3g+\frac{3n}2-f}}  \!\!\!\!\!\!
	a_{K_{\underline{d},\sigma,1(\underline{j})},\dots,K_{\underline{d},\sigma,1(\underline{j})}} .
\end{align}
For each $f=6,\dots,D-1$, taking $m=f-3$ in Lemma~\ref{lem1even0906} we know that  
\begin{align}
	\max_{\substack{-1\le k_1,\dots,k_n\le 3g+\frac{3n}2-f \\ k_1+\cdots+k_n=3g-3+n}}  g^{\left[\frac{2}{3}(f-3)\right]}
	\left|\kappa_{k_1,\dots,k_n}(g)\right| < C_{1},
\end{align}
where we recall that $C_1=C_{1}(f-3)$ is independent of~$n$. 
With the help of the statements (A) and (B), one can deduce that the number of 
elements~$\underline{j}$ in $V^{3g+3n/2-f}_{\underline{d},\sigma,3g+3n/2-f}$ 
can be controlled by a function of the form
\beq
d_1 A(f,n)+B(f,n),
\eeq
where $A(f,n)$ and $B(f,n)$, for each~$f$, are certain polynomial functions of~$n$. Therefore, 
\begin{align}
& \left|\sum_{\underline{j} \in V^{3g+\frac{3n}2-6}_{\underline{d},\sigma,3g+\frac{3n}2-D+1}} 
a_{K_{\underline{d},\sigma,1(\underline{j})},\dots,K_{\underline{d},\sigma,1(\underline{j})}} \right| \label{lem6estimate5}\\
& \leq \frac{(6g+3n-7)!!}{24^{g+\frac{n}2-1}\left(g+\frac{n}2-1\right)!} \sum_{f=6}^{D-1} \left| d_1 A(f,n)+B(f,n) \right| \frac{C_1(f-3)}{g^{\left[\frac23(f-3)\right]}}.\nn
\end{align}

To estimate the third term
on the right-hand side of~\eqref{sum},  we will divide the 
consideration into two cases: the $n\ge3$ case and the $n=2$ case.  For $n\ge3$,
we decompose $V_{\underline{d},\sigma,3g+3n/2-5}^{3g+3n/2-3}$ as follows:
$$
V_{\underline{d},\sigma,3g+\frac{3n}2-5}^{3g+\frac{3n}2-3} = V_{\underline{d},\sigma,3g+\frac{3n}2-4}^{3g+\frac{3n}2-3} \bigsqcup 
V_{\underline{d},\sigma,3g+\frac{3n}2-5}^{3g+\frac{3n}2-5}. 
$$
Notice that all possible $(k_1,\dots,k_n)$ satisfying $a_{k_1,\dots,k_n}\neq0$ 
and $\max_{1\leq j\leq n}k_j\ge 3g+\frac{3n}{2}-5$ are up to cyclic permutations given by the following four classes:
\begin{align}
& \Bigl(3g+\frac{3n}2-3,-1,0,\dots,-1,0,-1\Bigr), & \Bigl(3g+\frac{3n}2-4,0,-1,\dots,0,-1,0\Bigr), \nn\\
& \Bigl(3g+\frac{3n}2-5,-1,0,\dots,1,\dots,-1,0\Bigr), & \Bigl(3g+\frac{3n}2-5,0,-1,\dots,1,\dots,0,-1\Bigr). \nn
\end{align} 
Here, in the third class, it starts with $3g+\frac{3n}2-5$, followed by $-1,0,\dots,-1,0$, with~1 being inserted in any of the positions after $3g+\frac{3n}2-5$; the fourth class is similar, with $-1,0$ replaced by $0,-1$.
For each of such $(k_1,\dots,k_n)$, the number of solutions to $K_{\underline{d},\sigma,q}(\underline{j})=k_q$, $q=1,\dots,n$ can be estimated by the statements (A) and~(B).
Then by using \eqref{a}, \eqref{Vr1r2def} we obtain that
\begin{align}
&  \sum_{\sigma\in S_{n,1}^{\{2\}}}(-1)^{m(\sigma)+1}\sum_{\underline{j}\in V_{\underline{d},\sigma,3g+\frac{3n}2-4}^{3g+\frac{3n}2-3}} 
a_{K_{\underline{d},\sigma,1}(\underline{j}),\dots,K_{\underline{d},\sigma,n}(\underline{j})},\label{lem6estimate2}\\ 
& =\frac{2^{n-2}\,(6g+3n-7)!!}{24^{g+\frac{n}2-1}(g+\frac{n}2-1)!}\left(1+\frac{1}{6g+3n-7}\right), \nn\\
&\left|\sum_{\sigma\in S_{n,1}^{\{2\}}}(-1)^{m(\sigma)+1}\sum_{\underline{j}\in V_{\underline{d},\sigma,3g+\frac{3n}2-5}^{3g+\frac{3n}2-5}}
a_{K_{\underline{d},\sigma,1}(\underline{j}),\dots,K_{\underline{d},\sigma,n}(\underline{j})}\right|\label{lem6estimate3}\\
& \leq (5n-5) \, \frac{2^{n-2} \, (6g+3n-11)!!}{4^1 \, 24^{g+\frac{n}2-2}\left(g+\frac{n}2-2\right)!}. \nonumber
\end{align}
For $n=2$, we have
\begin{align}
&\left|(-1)^{m({\rm id})+1}\sum_{\underline{j}\in V_{(d_1,d_2),{\rm id},3g-2}^{3g}} 
	a_{d_1-1-j_1-j_2, d_2+1+j_1+j_2}-\frac{(6g-1)!!}{24^gg!}\right|\label{lem6estimate3n=2}\\
	&\leq \max\left\{d_1\frac{6(6g-5)!!}{24^gg!},\left|d_1\frac{6(6g-5)!!}{24^gg!}-\frac{36g(6g-5)!!}{24^gg!}\right|\right\}. \nn
\end{align}

We conclude from the above 
\eqref{sum}, \eqref{lemma8formula11010}, \eqref{lem6estimate5}, 
\eqref{lem6estimate2}, \eqref{lem6estimate3}, \eqref{lem6estimate3n=2} 
that 
\begin{equation}
\lim\limits_{g\to+\infty}\max_{n\in \ZZ^{\geq 2}_{\rm even} \atop n\leq C\log(g)}\max_{d_1,\dots,d_n\ge 0 \atop d_1+\cdots+d_n=3g+n-3 }\left|\sum_{\sigma\in S_{n,1}^{\{2\}}}\gamma_{\underline{d},\sigma}(g)-1\right|=0, 
\end{equation}
where for the case $n\geq3$ we also used the following elementary facts:
\begin{align}
& \frac{(6g+3n-7)!!}{24^{g+\frac{n}2-1}\left(g+\frac{n}2-1\right)!}=\frac{(6g+2n-5)!!}{2^{n-2}24^gg!}\prod_{j=1}^{\frac{n}2-1}\frac{(6g+2n-5+2j)}{6(g+j)}, \label{21} \\
& \forall \, \epsilon>0, \quad \lim_{g\to\infty}\max_{n\in \mathbb{Z}^{\ge3}_{\rm even} \atop 
n\leq C\log(g)}g^{1-\epsilon}\left|
\prod_{j=1}^{\frac{n}2-1}\frac{6g+2n-5+2j}{6(g+j)}-1\right|=0. \label{lem6estimate6}
\end{align}

The estimates are similar for $n$ odd. The lemma is proved.
\end{proof}

We will prove in Appendix~\ref{app0918} the following lemma. 

\begin{lemma}\label{lem7}
Let~$C$ be an arbitrary positive real number. 
For arbitrary $l\in\ZZ^{\ge2}$, we have 
\begin{equation}\label{lem7formula1}
\lim_{g\to+\infty}\max_{2\leq n\leq C\log(g)}\max_{d_1,\dots,d_n\ge 0 \atop d_1+\cdots+d_n=3g+n-3 }\sum_{\sigma\in S_{n,1}^{\{2l\}}}\left|\gamma_{\underline{d},\sigma}(g)\right|=0.
\end{equation}
Moreover, there exists a constant $C_{12}=C_{12}(C)$, such that
\begin{equation}\label{lem7formula2}
\max_{2\leq n\leq C\log(g)} \max_{3\leq l\leq \left[\frac{n}2\right]}
\max_{d_1,\dots,d_n\ge 0 \atop d_1+\cdots+d_n=3g+n-3 }\max_{\sigma\in S_{n,1}^{\{2l\}}}\frac{2^{n-2}\left|\gamma_{\underline{d},\sigma}(g)\right|g^{l-3}}{\left(\frac{\pi^2}6\right)^n14^n} < C_{12}.
\end{equation}

\end{lemma}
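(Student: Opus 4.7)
The plan is to adapt the approach used in the proof of Lemma~\ref{lem6}, the key observation being that every $\sigma\in S_{n,1}^{\{2l\}}$ has $l$ peak positions $t_{2u}$, at each of which~\eqref{defjsi} and~\eqref{defK927} force
\[
K_{\underline{d},\sigma,t_{2u}}(\underline{j}) \;=\; d_{\sigma(t_{2u})} + j_{t_{2u}-1} + j_{t_{2u}} + 1 \;\geq\; 1.
\]
In particular, ${\rm card}\{q\colon K_{\underline{d},\sigma,q}(\underline{j})\geq1\}\geq l$, which is exactly the hypothesis of Lemma~\ref{lem2}.

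For the uniform bound~\eqref{lem7formula2} (which assumes $l\geq3$), I apply Lemma~\ref{lem2} with $m=l$ to obtain $|\kappa_{K_1,\dots,K_n}(g)|\leq C_6\cdot 14^n/(g^{l-3}\prod_{q}(K_q+2)^2)$. I then substitute this into~\eqref{defepsilon}, convert $a$ to $\kappa$ via~\eqref{defikappa}, and simplify the prefactor using~\eqref{21} together with~\eqref{lem6estimate6}, which produces the factor $(1+o_g(1))/2^{n-2}$. The $\underline{j}$-summation $\sum_{\underline{j}}\prod_{q}(K_q+2)^{-2}\leq(\pi^2/6)^n$ is handled by iterated telescoping along the monotone runs of $\sigma$ (each free variable contributes at most $\sum_{k\geq1}k^{-2}\leq\pi^2/6$), exactly as in the third inequality of~\eqref{lemma8formula1}. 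Rearranging yields~\eqref{lem7formula2} with $C_{12}$ depending only on $C$.

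For the vanishing statement~\eqref{lem7formula1}, combining~\eqref{lem7formula2} with $|S_{n,1}^{\{2l\}}|\leq(2l)^{n-1}$ already suffices directly whenever $l$ is large enough relative to $C$, since then $g^{l-3}$ defeats the factor $(l\pi^2/3)^n\leq g^{C\log(l\pi^2/3)}$. The residual small-$l$ cases (most notably $l=2$) require a three-region decomposition of the $\underline{j}$-sum based on $\max_q K_q(\underline{j})$, mirroring~\eqref{sum}: on the low region $\max_q K_q\leq 3g+\frac{3n}2-3-M$ I apply Lemma~\ref{lem3} (or Lemma~\ref{lem3'} when $n$ is odd) with $m=M$, extracting a decay factor $g^{2-[2M/3]}$, and choose $M=M(l,C)$ large enough to overwhelm the resulting exponential factor of the form $(2l\pi^2/6)^n$ after summation over $\sigma$; the middle region is absorbed in the same way using the sharp bounds from Lemma~\ref{lem1even0906}/\ref{lem1odd0906}.

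The hard part will be the high region in the small-$l$ cases, where no intrinsic $g$-decay is available from the peak count alone. Here I invoke the structural upper bound $\max_q K_q\leq 3g+2n-2l-2$, which follows from $\sum_q K_q=3g+n-3$ together with each peak being $\geq 1$ and each non-peak being $\geq -1$. This confines the high region to a narrow band, in which the admissible patterns $(K_1,\dots,K_n)$ subject to the mod-$3$ residue conditions underlying~\eqref{a} can be enumerated explicitly, mirroring the four-class list appearing just above~\eqref{lem6estimate2}. For each such pattern, the number of $\underline{j}$'s realizing it is polynomial in $n$ and in $d_1$ by statements analogous to (A)--(B) of Lemma~\ref{lem6}'s proof, and the resulting contribution is absorbed by the sharp $|\kappa|$-bound from Lemma~\ref{lem1even0906}/\ref{lem1odd0906} combined with the $2^{-(n-2)}$ prefactor, giving a total of order $o_g(1)$ after summation over $\sigma\in S_{n,1}^{\{2l\}}$.
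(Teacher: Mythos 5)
Your treatment of \eqref{lem7formula2} is essentially the paper's: the $l$ peaks of $\sigma$ force $l$ of the values $K_{\underline{d},\sigma,q}(\underline{j})$ to be $\geq 1$, so Lemma~\ref{lem2} with $m=l$ applies, and the telescoping bound $\sum_{\underline{j}}\prod_q(K_{\underline{d},\sigma,q}(\underline{j})+2)^{-2}\leq(\pi^2/6)^n$ together with \eqref{21} finishes. The same is true of your low-region estimate for \eqref{lem7formula1} (Lemma~\ref{lem3} with a large cutoff, as in \eqref{lemma8formula1}) and of your ``large $l$ via \eqref{lem7formula2}'' shortcut.

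The gap is in the band where $\max_qK_{\underline{d},\sigma,q}(\underline{j})=3g+\tfrac{3n}2-f$ with $f$ small (say $f=5,6,7$), which your plan splits between the ``middle'' and ``high'' regions. There Lemma~\ref{lem1even0906} yields only $g^{-\left[\frac23(f-3)\right]}$, i.e.\ one or two powers of $g$, while summing over all of $S_{n,1}^{\{2l\}}$ costs $(2l)^{n-1}=g^{C\log(2l)+o(1)}$ by \eqref{estimatesn12l}; after absorbing the $2^{-(n-2)}$ coming from the prefactor via \eqref{21}, this leaves $g^{C\log l+O(1)}\cdot g^{-\left[\frac23(f-3)\right]}$, and your count of solutions $\underline{j}$ ``polynomial in $n$ and in $d_1$'' adds a further factor of order $g$. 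This does not tend to zero once $C$ is large, whereas the lemma must hold for arbitrary $C>0$. The paper closes exactly this band with two counting refinements that your proposal omits (see the discussion around \eqref{formula142}): for a pattern with $\max_ik_i=3g+\tfrac{3n}2-f$, (i) only those $\sigma$ whose peaks lie within distance $O(f)$ of one another admit any solution $\underline{j}$, which cuts the relevant permutation count from $(2l)^{n-1}$ down to $2^n\cdot\mathrm{poly}_f(n)$, and (ii) each admissible $(k_1,\dots,k_n)$ then has only $O_f(1)$ solutions $\underline{j}$ --- because the second-largest peak value is $O(f)$ --- rather than the $O(d_1)$ of the unimodal statements (A)--(B). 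Finally, your structural bound $\max_qK_q\leq 3g+2n-2l-2$ is weaker than the universal bound $3g+\tfrac{3n}2-3$ as soon as $n>4l-2$, so it confines nothing; the observation you actually need (and already have in hand) is that $l\geq2$ peaks force at least two entries $\geq1$, which rules out $\max_qK_q\geq 3g+\tfrac{3n}2-4$ entirely and is why the paper's decomposition \eqref{lem8formula} has no third region at all.
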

	
We are ready to prove Theorem~\ref{thm1}.

\begin{proof}[Proof of Theorem~\ref{thm1}]

For the case $n=1$ we know from~\eqref{f1mr} that $\G_{3g-2}(g)-1=0$.
Now we consider the case $n\in \ZZ^{\ge 2}$. Take $E=\bigl[50(C+3)^2\bigr]$. (One can verify that for an arbitrary $l\in\mathbb{Z}^{\ge E}$, $l$ satisfies that $l>4+C\log(14\pi^2l/3)$.)
By using \eqref{estimatesn12l}, \eqref{formulaWK}, \eqref{defepsilon}
and the triangle inequality, we obtain that 
\begin{align}\label{thm1proof1}
&\max_{{2\leq n\leq C\log(g)}} \max_{d_1,\dots,d_n\ge0\atop d_1+\cdots+d_n=3g-3+n} 
\left|\G_{d_1,\dots,d_n}(g)-1\right|  \\
&\leq \max_{{2\leq n\leq C\log(g)}} \max_{d_1,\dots,d_n\ge0\atop d_1+\cdots+d_n=3g-3+n} \nn\\
& \Biggl( \biggl|\sum_{\sigma\in S_{n,1}^{\{2\}}} \gamma_{\underline{d},\sigma}(g)-1\biggr|+\sum_{l=2}^{E}\sum_{\sigma\in S_{n,1}^{\{2l\}}} |\gamma_{\underline{d},\sigma}(g)|+\sum_{l=E+1}^{\left[\frac{n}2\right]}\sum_{\sigma\in S_{n,1}^{\{2l\}}} |\gamma_{\underline{d},\sigma}(g)| \Biggr)\nn\\
& \leq \max_{{2\leq n\leq C\log(g)}} \max_{d_1,\dots,d_n\ge0\atop d_1+\cdots+d_n=3g-3+n} \nn\\
& \Biggl( \biggl|\sum_{\sigma\in S_{n,1}^{\{2\}}} \gamma_{\underline{d},\sigma}(g)-1\biggr|+\sum_{l=2}^{E}\sum_{\sigma\in S_{n,1}^{\{2l\}}} |\gamma_{\underline{d},\sigma}(g)| \nn\\
& +\sum_{l=E+1}^{\left[\frac{n}2\right]}(2l)^{n-1}\max_{\sigma\in S_{n,1}^{\{2l\}}} |\gamma_{\underline{d},\sigma}(g)| \Biggr).\nn
\end{align}

We are going to estimate the right-hand side of~\eqref{thm1proof1} term by term for~$g$ large. 
By Lemma~\ref{lem6} we know that the first term of the right-hand side of~\eqref{thm1proof1} 
tends to~0 as $g\to \infty$. 
From formula~\eqref{lem7formula1}
of Lemma~8, one can deduce that
\begin{align}
\lim_{g\to \infty}\max_{{2\leq n\leq C\log(g)}}\max_{d_1,\dots,d_n\ge0 \atop d_1+\cdots+d_n=3g+n-3}\sum_{l=2}^{E}\sum_{\sigma\in S_{n,1}^{\{2l\}}} |\gamma_{\underline{d},\sigma}(g)|=0.
\end{align}
For the third term of the right-hand side of~\eqref{thm1proof1}, using formula~\eqref{lem7formula2} of Lemma~\ref{lem7}, we have 
\begin{align}
&\max_{{2\leq n\leq C\log(g)}} \max_{d_1,\dots,d_n\ge0\atop d_1+\cdots+d_n=3g-3+n}\sum_{l=E+1}^{\left[\frac n2\right]} \, (2l)^{n-1} \max_{\sigma\in S_{n,1}^{\{2l\}}} |\gamma_{\underline{d},\sigma}(g)| \label{thm1proof4}\\
\leq &\max_{{2\leq n\leq C\log(g)}} \max_{d_1,\dots,d_n\ge0\atop d_1+\cdots+d_n=3g-3+n}\sum_{l=E+1}^{\left[\frac n2\right]} \, (2l)^{n-1} \, C_{12} \, \frac{\left(\frac{\pi^2}6\right)^n14^n}{2^{n-2}g^{l-3}}\nn\\
%\leq &\max_{{2\leq n\leq C\log(g)}} \max_{d_1,\dots,d_n\ge0\atop d_1+\cdots+d_n=3g-3+n}\sum_{l=E+1}^{\left[\frac n2\right]} 4 \, \binom{n-1}{2l-1} \, C_{12} \, \frac{\left(\frac{7\pi^2}3l\right)^{n}}{g^{l-3}}\nn\\
\leq &\max_{{2\leq n\leq C\log(g)}} \max_{d_1,\dots,d_n\ge0\atop d_1+\cdots+d_n=3g-3+n}\sum_{l=E+1}^{\left[\frac n2\right]} 4 \, C_{12} \, \frac{g^{C\log\left(\frac{7\pi^2}3l\right)}}{g^{l-3}}\nn\\
\leq & \; 4 \, C_{12} \, \max_{{2\leq n\leq C\log(g)}} \max_{d_1,\dots,d_n\ge0\atop d_1+\cdots+d_n=3g-3+n} g^{-1-C\log2}\to0\quad (g\to\infty).\nn
\end{align}
Theorem~\ref{thm1} is proved. 
\end{proof}
\begin{remark}
As an application of Theorem~\ref{thm1}, one can improve a result of Liu and Xu~\cite{LX} on the asymptotics of Weil--Petersson volumes (see the Theorems 1.2, 3.8 in~\cite{LX}). 
Indeed, from Theorem~\ref{thm1} one can deduce that for any fixed $n, k\ge1$ being integers, 
\begin{equation}
\lim_{g\to\infty}\max_{d_1,\dots,d_n\ge 0 \atop d_1+\cdots+d_n=3g+n-3-k}\left|\frac{24^g\,g!\,5^k\prod_{j=1}^n(2d_j+1)!!}{(6g+2n+2k-5)!!}\int_{\Mgnbar}\kappa_{1}^k\psi_1^{d_1}\cdots\psi_n^{d_n }-1\right|=0.
\end{equation}
\end{remark}

\section{Normalized intersection numbers $\G_{d_1,\dots,d_n}(g)$ 
with fixed $n,d_1,\dots,d_{n-1}$} \label{section3}

In this section, we prove Theorem~\ref{thm2} and the validity of $k=1,2$ parts of Conjecture~\ref{conj1011}.
\begin{proof}[Proof of Theorem~\ref{thm2}]
For $n=1$, the statements of the theorem easily follow from~\eqref{f1mr}. 
Now consider the case $n\geq2$. 
For convenience, denote $d_n=3g-3+n-|d|$. 
Due to cyclic symmetry we could take $\sigma\in S_n$ with 
$\sigma(n)=n$ in the sum in~\eqref{formulaWK}. 
For such~$\sigma$ one can show that for arbitrary~$g$ the number of the elements satisfying the constraints
\begin{equation}\label{K-1}
K_{\underline{d}, \sigma, q} (\underline{j})\ge-1, \quad q=1,\dots,n 
\end{equation}
is less than or equal to 
$(|d|+n)^n$ (due to arguments similar to the ones given after~\eqref{defepsilon}). Using~\eqref{a} we see that 
 each possibly nonzero summand in the right-hand side of~\eqref{defepsilon}  
 after multiplying by the factor 
 $\frac{24^g g! (-1)^{m(\sigma)+1}}{(6g-5+2n)!!}$ is a rational function of~$g$. 
 Since $S_n$ is a finite set we therefore conclude the existence of a rational function $R(g;d_1,\dots,d_{n-1})$, such that 
 $\G_{d_1,\dots,d_n}(g)=R(g;d_1,\dots,d_{n-1})$. When $d_1,\dots,d_{n-1}$ are all greater than or equal to~1, the statement on the positions of all possible poles 
 of $R(g;d_1,\dots,d_{n-1})$
 could be proved again using~\eqref{a}. 
 In general, if $d_1,\dots,d_{n-1}$ contain zeros, the statement on the positions of 
 possible poles can be proved by mathematical induction with the further application of the string equation~\eqref{stringeq}.
The statement that the leading term $\G_0$ in~\eqref{37} is identically~1 is a consequence of Theorem~\ref{thm1}. 
The full asymptotic behavior~\eqref{37} is then implied by the rationality. 
The theorem is proved. 
\end{proof}

\begin{remark}
In the above proof of Theorem~\ref{thm2} we used Theorem~\ref{thm1} to get the 
leading term~$\G_0$, but actually, 
when $n$ is fixed like here in Theorem~\ref{thm2}, to obtain 
$\G_0$ from~\eqref{f1mr}--\eqref{f2mr} (or equivalently from Proposition~\ref{prop1}) 
the estimates could be given in a much easier procedure, with  
several key observations (like the role of unimodal permutations) 
in the proof of Theorem~\ref{thm1} kept. 
\end{remark}

Let us now indicate another proof of Theorem~\ref{thm2} based on the work of Liu and Xu~\cite{LX}. 
Following Liu and Xu, define
\begin{align}
& \C_{d_1,\dots,d_{n-1}}(g) = 
\frac{24^gg!\langle\tau_{d_1}\cdots\tau_{d_{n-1}}\tau_{3g-3+n-|d|}\rangle\prod_{j=1}^{n-1}(2d_j+1)!!}{(6g)^{|d|}}, \label{defC} 
\end{align}
and define
\begin{equation}\label{defP}
\cP_{d_1,\dots,d_{n-1}}(g) = (6g)^{|d|} \, \C_{d_1,\dots,d_{n-1}}(g).
\end{equation}
Here, $n\geq 1$, $d_1,\dots,d_{n-1}\geq 0$, and $|d|:=\sum_{j=1}^{n-1}d_j$. 
Liu and Xu proved the following two statements by using Virasoro constraints~\eqref{vira}.

\smallskip
\noindent {\bf Theorem~A} (Liu--Xu~\cite[Corollary~3.3]{LX}). 
{\it For any fixed $n\geq 1$ and fixed $d_1,\dots,d_{n-1}\geq 0$,  
the number $\C_{d_1,\dots,d_{n-1}}(g)$ has the asymptotic expansion:
\begin{equation} \label{LX1}
\C_{d_1,\dots,d_{n-1}}(g) \sim \sum_{k\ge0} \frac{\C_{k}}{g^k} \quad  (g\to\infty),
\end{equation}
where $\C_0=1$ and  
$\C_k=\C_k(d_1,\dots,d_{n-1})$, $k\ge1$, are constants.
Moreover, the right-hand side of~\eqref{LX1} truncates to a finite sum: 
$\C_k\equiv0$ whenever $k>|d|$.  }
	
The truncation property given in Liu--Xu's Theorem~A is 
even more precisely stated in Liu--Xu's next theorem.

\smallskip

\noindent {\bf Theorem~B} (Liu--Xu~\cite[Theorem~4.1]{LX}). 
{\it For any fixed $n\ge1$ and fixed $d_1,\dots ,d_{n-1}\ge0$, there exits a polynomial $P_{d_1,\dots ,d_{n-1}}(g)\in \mathbb{Q}[g]$ of deg~$|d|$ with the highest-degree term $(6g)^{|d|}$ and the constant term $\prod_{\ell=1}^{|d|}(n-\ell-2)\prod_{j=1}^{n-1}\frac{(2d_j+1)!!}{d_j!}$, such that 
$$\cP_{d_1,\dots,d_{n-1}}(g)=P_{d_1,\dots,d_{n-1}}(g), \quad  \forall\, g\in \mathbb{Z}_{\ge 0}.$$ 
Moreover, $P_{d_1,\dots,d_{n-1}}(g)\in\mathbb{Z}$ $($$\forall\, g\in\mathbb{Z}$$)$, and $2^{[|d|/3]}P_{d_1,\dots ,d_{n-1}}(g)\in\mathbb{Z}[g]$.}

Theorem~\ref{thm2} easily follows from Theorem~B. 

By using the Virasoro constraints~\eqref{vira}, 
Liu and Xu~\cite{LX} obtained the first two 
 explicit expressions for~$\C_k$ as follows:

{\small \begin{align}
\C_1&=-\frac{|d|^2}{6}+\frac{(n-1)|d|}{3}+\frac{n^2-5n}{12}+\frac{5p_0-p_0^2}{12}, \label{C1expression}\\
\C_2&=\frac{|d|^4}{72}-\frac{(3n-2)|d|^3}{54}+\frac{n(3n+1)|d|^2}{72}  \label{C2expression} \\
		&+\frac{(6n^3-48n^2+54n-11)|d|}{216}+\frac{n(3n^3-50n^2+189n+14)}{864}\nn\\
		&+\frac{p_0^2(4|d|^2-8n|d|+8|d|-2n^2+22n-12p_1+47)}{288}\nn\\
		&+\frac{p_0^4}{288}-\frac{23p_0^3}{432}-\frac{5p_2}{72}-\frac{17p_1}{72}\nn\\&
		+\frac{p_0(-30|d|^2+60n|d|-60|d|+15n^2-165n+90p_1-7)}{432}\nn,
\end{align}} 

\vspace{-2mm}

\noindent where $p_i$ denotes the multiplicity of~$i$ in~$(d_1,\dots,d_{n-1})$.
We notice here 
the appearance of $|d|=d_1+\dots+d_{n-1}$ in 
the expression of $\C_1$, $\C_2$. 
{\it Remarkably}, formulae of $\G_1$ and $\G_2$ (cf.~\eqref{g1expression}, \eqref{g2expression}, \eqref{GkequalsGk}) are much simpler than 
$\C_1$ and~$\C_2$. 
According to the definitions, $\G_{d_1,\dots,d_n}(g)$ is related to $\C_{d_1,\dots,d_n}(g)$ as follows:
\begin{equation}\label{defG}
\G_{d_1,\dots,d_n}(g) = \frac{(6g)^{|d|} \, \C_{d_1,\dots,d_{n-1}}(g)}{\prod_{\ell=1}^{|d|} (6g+2n-3-2\ell)}.  
\end{equation}
The just-mentioned remarkable simplification can then be  
more precise: multiplying the asymptotic expansion~\eqref{LX1} of $\C_{d_1,\dots,d_{n-1}}(g)$ by the factor 
$$
\frac{(6g)^{|d|}}{\prod_{\ell=1}^{|d|} (6g+2n-3-2\ell)} 
$$
gives the asymptotic expansion~\eqref{37} of $\G_{d_1,\dots,d_{n-1},3g-3+n-|d|}(g)$, that makes the appearance of~$|d|$ all disappear, conjecturally (Conjecture~\ref{conj1011}). 
For $k=1,2$, one can verify straightforwardly that 
$|d|'s$ do disappear in this way, and we get the expressions for $\G_1$ and~$\G_2$ 
fulfilling all the statements in Conjecture~\ref{conj1011} with $k=1,2$.
This proves the validity of the $k=1,2$ parts of Conjecture~\ref{conj1011}.
(One can also verify that the expressions for $\G_1$ and~$\G_2$ obtained in this way 
coincide with the right-hand sides of \eqref{g1expression}, \eqref{g2expression}.) 
However, we note that these do {\it not} imply the $K=1,2$ parts of Conjecture~\ref{conj1}.

Let us end this section by expressing the coefficients $\C_k$ and $\G_k$ in~\eqref{LX1} and~\eqref{37}
 in terms of coefficients of the polynomials 
$P_{d_1,\dots,d_{n-1}}(g)$. Write 
\begin{equation}\label{writepyals}
P_{d_1,\dots,d_{n-1}}(g)=:\sum_{m=0}^{|d|}\alpha_{m, d_1,\dots,d_{n-1}} g^{m},
\end{equation}
where $\alpha_{m,d_1,\dots,d_{n-1}}\in\mathbb{Q}$, $0\leq m \leq |d|$. 
Then we have
\begin{align}
& \C_{k} = \frac{\alpha_{|d|-k,d_1,\dots,d_{n-1}}}{6^{|d|}},\\
& \G_k = \sum_{l=0}^{k} \frac{(-1)^l}{6^{|d|+l}}\alpha_{|d|+l-k,d_1,\dots,d_{n-1}}
\sum_{\substack{j_1,\dots, j_{|d|} \geq 0\\j_1+\cdots+j_{|d|}=l}}\prod_{\ell=1}^{|d|}(2n-3-2\ell)^{j_{\ell}}.
\end{align}

\section{Proof of Theorem~\ref{thm3}}\label{section4}
In this section, we investigate the {\it uniform in~$\underline{d}$} asymptotic expansion of the normalized intersection numbers
 $\G(\underline{d};g)$ when $g$ is large. 
 
\begin{proof}[Proof of Theorem~\ref{thm3}]
For $n=1$, from \eqref{f1mr} we know that the statement~\eqref{thm3formula} is trivial. 

Now fix $n\geq2$ to be an integer. 
For the case that $n$ is even, 
let $g\ge0$ be an integer, and let $d_1,\dots,d_n\ge[3K/2]$ be integers
 satisfying $d_1+\cdots+d_n=3g-3+n$. We have
\begin{align}
\G_{d_1,\dots,d_n}(g)=\sum_{\sigma\in S_{n,1}^{\{2\}}}\gamma_{\underline{d},\sigma}(g)
+\sum_{l=2}^{\left[\frac{n}2\right]}\sum_{\sigma\in S_{n,1}^{\{2l\}}} \gamma_{\underline{d},\sigma}(g).
	\label{thm3proof1}
\end{align}
Here we recall that $\gamma_{\underline{d},\sigma}(g)$ is defined by~\eqref{defepsilon}.
	
We start with estimating the second term of the right-hand side of~\eqref{thm3proof1}.
For $l\geq2$ and for each $\sigma \in S_{n,1}^{\{2l\}}$, denote by $1\leq t_1<t_2<\cdots<t_{2l}\leq n$ the integers 
such that for any $1\leq u\leq l$,
\begin{align}
&\sigma(t_{2u-1})<\sigma(t_{2u-1}+1)<\cdots<\sigma(t_{2u}), \\
&\sigma(t_{2u})>\sigma(t_{2u}+1)>\cdots>\sigma(t_{2u+1}).
\end{align}
Since $d_1,\dots,d_n\ge[3K/2]$ and $l\ge2$, on the right-hand side of~\eqref{defepsilon} we only need to consider that the summation index vector~$\underline{j}$ belongs to the set:
\begin{align}
V_{\underline{d},\sigma,-1}^{3g+\frac{3n}2-8-\left[\frac{3K}2\right]} \bigsqcup V_{\underline{d},\sigma,3g+\frac{3n}2-7-\left[\frac{3K}2\right]}^{3g+\frac{3n}2-5-\left[\frac{3K}2\right]}.
\end{align}
Taking $m=5+[3K/2]$ in Lemma~\ref{lem3}, from an 
argument similar to that of~\eqref{lemma8formula1} one can deduce that there exists 
a constant $C_{14}$, that is 
{\it independent of $g,\sigma,d_1,\dots,d_n$}, such that 
\begin{align}\label{thm3formula1}
\left|\sum_{\underline{j}\in V_{\underline{d},\sigma,-1}^{3g+\frac{3n}2-8-\left[\frac{3K}2\right]}} \!\!\!\!\!\! a_{K_{\underline{d},\sigma,1}(\underline{j}),\dots,K_{\underline{d},\sigma,n}(\underline{j})} \right|  
\leq C_{14} \, \frac{(6g+3n-7)!! \, g^{-(K+1)}}{24^{g+\frac{n}2-1}\left(g+\frac{n}2-1\right)!} 
\end{align}
hold true for sufficiently large $g$.
One can show that for arbitrary $g$, the number of elements in 
$V_{\underline{d},\sigma,3g+3n/2-7-\left[3K/2\right]}^{3g+3n/2-5-\left[3K/2\right]}$ 
is smaller than or equal to
\beq\label{counting91}
\left(2+\left[\frac{3K}2\right]+\frac{n}2\right)^n+\left(3+\left[\frac{3K}2\right]+ \frac{n}2\right)^n+\left(4+\left[\frac{3K}2\right]+\frac{n}2\right)^n.
\eeq
It follows from Lemma~\ref{lem1even0906} with $m=2+[3K/2]$ that there exists a constant $C_{15}$, such that 
\begin{align}
&\max_{-1\leq k_1,\dots,k_n\leq 3g+\frac{3n}2-5-\left[\frac{3K}2\right]\atop k_1+\cdots+k_n=3g-3+n} 
\left|a_{k_1,\dots,k_n}\right| \leq C_{15} \, \frac{(6g+3n-7)!! \, g^{-(K+1)}}{24^{g+\frac{n}2-1}\left(g+\frac{n}2-1\right)!} 
\label{thm3formula3}
\end{align}
holds for sufficiently large $g$.
Since $n$ is fixed, using \eqref{thm3formula1}--\eqref{thm3formula3}
as well as the fact that $C_{14}$ is independent of $d_1,\dots,d_n$, 
we conclude that 
\beq
\lim_{g\to\infty} g^{K} \max_{d_1,\dots,d_n\geq \left[\frac{3K}2\right] \atop d_1+\cdots+d_n=3g-3+n} 
\left|\sum_{l=2}^{\left[\frac{n}2\right]}\sum_{\sigma\in S_{n,1}^{\{2l\}}} \gamma_{\underline{d},\sigma}(g)\right| =0.\label{formula0928}
\eeq

Let us now estimate the first term on the right-hand side of~\eqref{thm3proof1}.
For $\sigma\in S_{n,1}^{(r,n-r)}$, we decompose the set
$V_{\underline{d},\sigma,-1}^{3g+3n/2-3}$ as follows:
\begin{align}
&V_{\underline{d},\sigma,-1}^{3g+\frac{3n}2-8-\left[\frac{3K}2\right]} \bigsqcup 
V_{\underline{d},\sigma,3g+\frac{3n}2-7-\left[\frac{3K}2\right]}^{3g+\frac{3n}2-6-\left[\frac{3K+1}2\right]}\label{decompose1}\\
&\quad\bigsqcup
 V_{\underline{d},\sigma,3g+\frac{3n}2-5-\left[\frac{3K+1}2\right]}^{3g+\frac{3n}2-5-\left[\frac{3K}2\right]}\bigsqcup V_{\underline{d},\sigma,3g+\frac{3n}2-4-\left[\frac{3K}2\right]}^{3g+\frac{3n}2-3}.\nn
\end{align}

Similarly as above, we deduce 
by taking $m=5+[3K/2]$ in Lemma~\ref{lem3} that there exists 
a constant $C_{16}$, that is {\it independent of $g,\sigma,d_1,\dots,d_n$}, such that 
\begin{align}
& \left|\sum_{V_{\underline{d},\sigma,-1}^{3g+\frac{3n}2-8- \left[\frac{3K}2\right]}}a_{K_{\underline{d},\sigma,1}(\underline{j}),\dots,K_{\underline{d},\sigma,n}(\underline{j})}\right| \leq C_{16}\,\frac{(6g+3n-7)!!}{24^{g+\frac{n}2-1}\left(g+\frac{n}2-1\right)!}\frac{1}{g^{K+1}}. \label{thm3formula5}
\end{align}

Using a similar argument to~\eqref{lem6estimate5} we have 
\begin{align}\label{thm3formula7}
		&\left|(-1)^{m(\sigma)+1}\sum_{\underline{j}\in V_{\underline{d},\sigma,3g+\frac{3n}2-7-\left[\frac{3K}2\right]}^{3g+\frac{3n}2-6-\left[\frac{3K+1}2\right]}}a_{K_{\underline{d},\sigma,1}(\underline{j}),\dots,K_{\underline{d},\sigma,n}(\underline{j})}\right|\\
		&\leq\sum_{f=6+\left[\frac{3K+1}2\right]}^{7+\left[\frac{3K}2\right]}\left|d_1\, A(f,n)+B(f,n)\right|\max_{\substack{-1\le k_1,\dots,k_n\le 3g+\frac{3n}2-f \\ k_1+\cdots+k_n=3g-3+n}}a_{K_{\underline{d},\sigma,1}(\underline{j}),\dots,K_{\underline{d},\sigma,n}(\underline{j})}\nn\\
		&\leq\sum_{f=6+\left[\frac{3K+1}2\right]}^{7+\left[\frac{3K}2\right]}\left|d_1\,A(f,n)+B(f,n)\right|\frac{(6g+3n-7)!!}{24^{g+\frac{n}2-1} \bigl(g+\frac{n}2-1\bigr)!}\frac{C_{1}(f-3)}{g^{K+2}}. \nn
\end{align}
Here, $A(f,n)$ and $B(f,n)$ for every $f$ are certain polynomials of~$n$, 
and the validity of the second inequality can be deduced from 
Lemma~\ref{lem1even0906} with $m=f-3$.

To proceed let us notice the following fact: for $g$ being sufficiently large, for given $k_1,\dots,k_n\ge-1$ satisfying $k_1+\cdots+k_n=3g+n-3$
and $\max_{1\le j\le n} k_j\ge3g+3n/2-4-[3K/2]$, and for 
 given $d_1,\dots,d_n\ge[3K/2]$ satisfying $d_1+\cdots+d_n=3g+n-3$,
if $k_{r+1} \ge 3g+3n/2-4-[3K/2]$, then the equations
\begin{align}\label{equa1}
   K_{\underline{d},\sigma,q}(\underline{j})=k_q,\quad q=1,\dots,n
\end{align}
for $\underline{j}\in({\mathbb{Z}^{\ge0}})^n$ have exactly $d_1-k_1$ solutions; if $k_{r+1} \le 3g+3n/2-5-[3K/2]$ 
the above equations~\eqref{equa1} for $\underline{j}\in({\mathbb{Z}^{\ge0}})^n$ do not have solutions. Thus,
\begin{align}\label{thm3formula6}
&\sum_{r=1}^{n-1}\sum_{\sigma\in S_n^{(r,n-r)}} (-1)^{m(\sigma)+1}\sum_{\underline{j}\in V_{\underline{d},\sigma,3g+\frac{3n}2-4-\left[\frac{3K}2\right]}^{3g+\frac{3n}2-3}} a_{K_{\underline{d},\sigma,1}(\underline{j}),\dots,K_{\underline{d},\sigma,n}(\underline{j})}\\
=&\sum_{r=1}^{n-1}(-1)^{n-r+1}\sum_{\sigma\in S_n^{(r,n-r)}} 
\sum_{\substack{ k_1,\dots,k_{n}\ge-1\\ k_1+\cdots+k_n=3g+n-3 \\ k_{r+1}\ge3g+\frac{3n}2-4-\left[\frac{3K}2\right]}}(d_1-k_1)a_{k_1,\dots,k_n}\nn\\
=&\sum_{r=1}^{n-1}(-1)^{n-r+1}\binom{n-2}{r-1}
\sum_{\substack{ k_1,\dots,k_{n}\ge-1\\ k_1+\cdots+k_n=3g+n-3 \\ k_n\ge3g+\frac{3n}2-4-\left[\frac{3K}2\right]}}(d_1-k_{n-r})a_{k_1,\dots,k_n}\nn\\
=&\sum_{r=1}^{n-1}(-1)^{n-r}\binom{n-2}{r-1}\sum_{\substack{ k_1,\dots,k_{n}\ge-1\\ k_1+\cdots+k_n=3g+n-3\\  k_n\ge3g+\frac{3n}2-4-\left[\frac{3K}2\right]}}k_{n-r}a_{k_1,\dots,k_n}\nn\\
&+\delta_{n,2}\,d_1\sum_{k_1=-1}^{\left[\frac{3K}2\right]}a_{k_1,3g-1-k_1}.\nn
\end{align}
Here, in the second equality we used the cyclic symmetry of $a_{k_1,\dots,k_n}$.

We now divide the consideration into two cases: the $n\geq3$ case and the $n=2$ case.
For $n\geq3$, we have
\begin{align}
V_{\underline{d},\sigma,3g+\frac{3n}2-5-\left[\frac{3K+1}2\right]}^{3g+\frac{3n}2-5-\left[\frac{3K}2\right]}=
\left\{\begin{array}{ll}
V_{\underline{d},\sigma,3g+\frac{3n}2-5-\left[\frac{3K}2\right]}^{3g+\frac{3n}2-5-\left[\frac{3K}2\right]}, \quad &K\,{\rm even,}\\	V_{\underline{d},\sigma,3g+\frac{3n}2-5-\left[\frac{3K}2\right]}^{3g+\frac{3n}2-5-\left[\frac{3K}2\right]}\bigsqcup V_{\underline{d},\sigma,3g+\frac{3n}2-6-\left[\frac{3K}2\right]}^{3g+\frac{3n}2-6-\left[\frac{3K}2\right]}, \quad &K\,{\rm odd.}
\end{array}\right.\label{decompose2}
\end{align}
For simplicity we assume that $K$ is even ($K$ odd similar), and we have
\begin{align}
&\sum_{r=1}^{n-1}\sum_{\sigma\in S_n^{(r,n-r)}}(-1)^{m(\sigma)+1}\sum_{\underline{j}\in V_{\underline{d},\sigma,3g+\frac{3n}2-5-\left[\frac{3K}2\right]}^{3g+\frac{3n}2-5-\left[\frac{3K}2\right]}}a_{K_{\underline{d},\sigma,1}(\underline{j}),\dots,K_{\underline{d},\sigma,n}(\underline{j})}\label{Thm3proof10}\\
&=\sum_{r=1}^{n-1}\sum_{\sigma\in S_n^{(r,n-r)}}(-1)^{n-r+1} \nn\\
&\times\left(\sum_{\substack{k_1,\dots,k_n\ge-1\\k_1+\cdots+k_n=3g+n-3\\k_{r+1}=\left[\frac{3K}2\right]+1\\\max\{k_q\}=3g+\frac{3n}2-5-[\frac{3K}2]}}+\sum_{\substack{k_1,\dots,k_n\ge-1\\k_1+\cdots+k_n=3g+n-3\\k_{r+1}=3g+\frac{3n}2-5-\left[\frac{3K}2\right]}}\right) \omega_{\underline{d},\sigma,\underline{k}} \, a_{k_1,\dots,k_n},\nn
\end{align}
where $\omega_{\underline{d},\sigma,\underline{k}}$ denotes the number of the solutions to the equations
$$K_{\underline{d},\sigma,q}(\underline{j})=k_q,\quad q=1,\dots,n$$
for $\underline{j}\in\left(\ZZ^{\ge0}\right)^n$.
Note that for $k_{r+1}=\left[3K/2\right]+1$, we have
\begin{align}
0\leq&\omega_{\underline{d},\sigma,\underline{k}}\leq 1,\label{omega1}
\end{align}
and that for $k_{r+1}=3g+\frac{3n}2-5-\left[3K/2\right]$,
\begin{align}
d_1-2-k_1\leq&\omega_{\underline{d},\sigma,\underline{k}}\leq d_1-k_1.\label{omega2}
\end{align} 
By using \eqref{Thm3proof10}, \eqref{omega1}, \eqref{omega2}, 
one can deduce from Lemma~\ref{lem1even0906} 
that 
\begin{align}
& \left|\sum_{r=1}^{n-1}\sum_{\sigma\in S_n^{(r,n-r)}}(-1)^{m(\sigma)+1}\sum_{\underline{j}\in V_{\underline{d},\sigma,3g+\frac{3n}2-5-\left[\frac{3K}2\right]}^{3g+\frac{3n}2-5-\left[\frac{3K}2\right]}}a_{K_{\underline{d},\sigma,1}(\underline{j}),\dots,K_{\underline{d},\sigma,n}(\underline{j})}\right|\label{Thm3proof5}\\
& \leq C_{17} \, \frac{(6g+3n-7)!!}{24^{g+\frac{n}2-1}\left(g+\frac{n}2-1\right)!}\frac{1}{g^{K+1}},\nn
\end{align}
for some constant $C_{17}$, that is independent of $g,d_1,\dots,d_n$.
For $n=2$, one can verify that
\begin{align}
&\lim_{g\to \infty}g^K \Biggl|(-1)^{m({\rm id})+1} 
\sum_{\underline{j} \in 
V_{\underline{d}, {\rm id},3g-2-\left[\frac{3K+1}2\right]}^{3g-2-\left[\frac{3K}2\right]}}
a_{d_1-1-j_1-j_2,d_2+1+j_1+j_2} \label{Thm3proof6}\\
& \qquad\qquad\qquad\qquad\quad -d_1\sum_{k_1=\left[\frac{3K}2\right]+1}^{\left[\frac{3K+1}2\right]+1}a_{k_1,3g-1-k_1}\Biggr|=0.\nn
\end{align}

We conclude from
 \eqref{formula0928}, \eqref{thm3formula5}, \eqref{thm3formula7}, \eqref{thm3formula6}, \eqref{Thm3proof5}, 
 \eqref{Thm3proof6}
that for every $n\ge2$, 
\begin{align}\label{Thm3proof4}
&\lim_{g\to\infty}\max_{\substack{d_1,\dots,d_n\ge \left[\frac{3K}2\right] \\ d_1+\cdots+d_n=3g+n-3}} 
g^K \, \Bigg|\G_{d_1,\dots,d_n}(g)\\
&-\frac{24^gg!}{(6g+2n-5)!!}\sum_{r=1}^{n-1}(-1)^{n-r}\binom{n-2}{r-1} \!\! 
   \sum_{ k_1,\dots,k_{n}\ge-1\atop k_n\ge3g+\frac{3n}2-4-\left[\frac{3K}2\right]} \!\! k_{n-r}a_{k_1,\dots,k_n}\Bigg| =0.\nn
\end{align}
Here for the case $n\geq3$ we used again the elementary fact~\eqref{21}, and
 for the case $n=2$, we also used the following elementary formula:
\begin{align}
\lim_{g\to\infty}\max_{\substack{d_1,d_2\ge \left[\frac{3K}2\right] \\ d_1+d_2=3g-1}} g^K\,\frac{24^gg!}{(6g-1)!!}d_1\sum_{k_1=-1}^{\left[\frac{3K+1}2\right]+1}a_{k_1,3g-1-k_1}=0,
\end{align}
which can be deduced from the following identity derived in~\cite{Guo}:
\begin{align}
&\frac{24^gg!}{(6g-1)!!}\sum_{l=0}^{k+1}a_{l-1,3g-l}=\\
&\quad\quad\quad\frac{(6g-3-2k)!!}{(6g-1)!!} \, \left\{\begin{array}{ll}
		\frac{(6j-1)!!(g-1)!}{j!(g-j)!}(g-2j),  &k=3j-1,\\
		-2\frac{(6j+1)!!(g-1)!}{j!(g-1-j)!},  &k=3j,\\
		2\frac{(6j+3)!!(g-1)!}{j!(g-1-j)!},  &k=3j+1.
	\end{array}
	\right.\nn
\end{align}

From~\eqref{a} we know that for $k_1,\dots,k_n\geq-1$ with $k_n \ge 3g+3n/2-4-[3K/2]$ satisfying $k_1+\cdots+k_n=3g-3+n$,  $\frac{24^gg!}{(6g+2n-5)!!} a_{k_1,\dots,k_n}$ is a rational function
of~$g$. 
The estimates are similar when $n$ is odd. Combined with Theorem~\ref{thm1}, Theorem~\ref{thm3} is proved.
\end{proof}

Denote 
\begin{align}
& G(n,p_0,p_1,\dots) := 
\sum_{k\geq0} \frac{G_k\Bigl(n,p_0,\dots,p_{\left[\frac32 k\right]-1}\Bigr)}{g^k}, \label{1011112}\\
& G^{\rm app}_{K}\Bigl(n,p_0,p_1,\dots,p_{\left[\frac32 K\right]-1}\Bigr) := 
\sum_{k=0}^K \frac{G_k\Bigl(n,p_0,\dots,p_{\left[\frac32 k\right]-1}\Bigr)}{g^k}, \label{1011113}
\end{align}
where $K\geq 0$.
The following two corollaries can then be obtained by using the dilaton equation~\eqref{dilatoneq} and the string equation~\eqref{stringeq}, respectively.
\begin{cor}\label{cordilaton}
Assuming Conjecture~\ref{conj1011} is true, we have 
\begin{align}\label{1011114}
&G(n,p_0,p_1,\dots) = \frac{6g+3n-9}{6g+2n-5} \, G(n-1,p_0,p_1-1,p_2,\dots).
\end{align}
\end{cor}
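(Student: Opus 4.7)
The plan is to deduce \eqref{1011114} from the dilaton equation~\eqref{dilatoneq} at the level of the normalized intersection numbers, and then pass to the large-$g$ asymptotics under Conjecture~\ref{conj1011}. First I would differentiate~\eqref{dilatoneq} in $t_{d_1},\dots,t_{d_n}$, set ${\bf t}={\bf 0}$, and extract the coefficient of $\e^{2g-2}$ to recover the classical dilaton identity
\[
\int_{\overline{\mathcal{M}}_{g,n+1}}\psi_1^{d_1}\cdots\psi_n^{d_n}\psi_{n+1} \;=\; (2g-2+n)\int_{\Mgnbar}\psi_1^{d_1}\cdots\psi_n^{d_n},
\]
valid whenever $d_1+\cdots+d_n=3g-3+n$. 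Plugging this into the definition~\eqref{defiG} and simplifying the double-factorial ratio $(6g+2n-5)!!/(6g+2n-3)!!$ gives the clean consequence
\[
\G_{d_1,\dots,d_n,1}(g) \;=\; \frac{6g+3n-6}{6g+2n-3}\,\G_{d_1,\dots,d_n}(g).
\]

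Next I would exploit the $S_{n+1}$-symmetry of $\G$ (both the integral and the normalization factor $\prod(2d_j+1)!!$ are symmetric in their indices) to swap the last two entries, producing
\[
\G_{d_1,\dots,d_{n-1},1,d_n}(g) \;=\; \frac{6g+3n-6}{6g+2n-3}\,\G_{d_1,\dots,d_{n-1},d_n}(g),
\]
and interpret both sides with $d_n:=3g-3+n-(d_1+\cdots+d_{n-1})$ as the ``free'' index forced by the degree constraint (the identity is consistent on both sides since $d_n$ takes the same value). Both sides are rational functions of~$g$ by Theorem~\ref{thm2}, so we may equate their large-$g$ asymptotic series term by term.

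Under Conjecture~\ref{conj1011} together with~\eqref{1011112}, the left side has asymptotic expansion $G(n+1,p_0,p_1+1,p_2,\dots)$, because the fixed-index tuple $(d_1,\dots,d_{n-1},1)$ differs from $(d_1,\dots,d_{n-1})$ only by the insertion of a single~$1$, which bumps $p_1$ up by one while leaving all other $p_i$ unchanged; the right side has expansion $G(n,p_0,p_1,p_2,\dots)$. Expanding the rational prefactor $(6g+3n-6)/(6g+2n-3)$ as a formal power series in $1/g$ beginning with~$1$, we obtain the identity
\[
G(n+1,p_0,p_1+1,p_2,\dots) \;=\; \frac{6g+3n-6}{6g+2n-3}\,G(n,p_0,p_1,p_2,\dots).
\]
Since the coefficients $G_k$ are polynomials in $p_0,p_1,\dots$, an identity which holds for every non-negative integer value of $p_1$ extends to a polynomial identity in $p_1$. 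The substitutions $n\mapsto n-1$ and then $p_1\mapsto p_1-1$ therefore produce exactly~\eqref{1011114}.

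The only step requiring real care is the bookkeeping of multiplicities at the moment when the extra $\tau_1$ is inserted; once that is tracked, the remainder of the proof is a formal manipulation of the dilaton identity combined with the asymptotic expansion supplied by Conjecture~\ref{conj1011}.
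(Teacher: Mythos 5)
Your proposal is correct and follows exactly the route the paper intends: the paper derives this corollary directly from the dilaton equation \eqref{dilatoneq}, which at the level of the normalized numbers gives $\G_{d_1,\dots,d_n,1}(g)=\frac{6g+3n-6}{6g+2n-3}\G_{d_1,\dots,d_n}(g)$, and then one passes to the asymptotic series supplied by Conjecture~\ref{conj1011} and relabels $n\mapsto n-1$, $p_1\mapsto p_1-1$ as you do. The only point worth stating explicitly is that the final shift $p_1\mapsto p_1-1$ is legitimate because each coefficient $G_k$ is a polynomial and the identity holds on a Zariski-dense set of integer tuples $(n,p_0,\dots,p_{[3k/2]-1})$, which you have essentially noted.
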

Formula~\eqref{1011114} can be alternatively given by
$$
G_K^{app}(n,p_0,p_1,\dots) = \frac{6g+3n-9}{6g+2n-5} \, G_K^{app}(n-1,p_0,p_1-1,p_2,\dots)+\mathcal{O}\big(g^{-K-1}\big).
$$

\begin{cor}\label{corstring}
Assuming Conjecture~\ref{conj1011} is true, we have for every $K\geq1$,
\begin{align}%\label{stringG}
& G(n,p_0,p_1,\dots) = \frac1{6g+2n-5} \label{stringapp} \\
& \times \biggl(
3 p_1 \Bigl(G(n-1,p_0,p_1-1,p_2,\dots) 
-G(n-1,p_0-1,p_1,\dots,)\Bigr) \nn\\
&+ \sum_{i=2}^{\infty} (2i+1) \, p_i \, \Bigl( -G (n-1,p_0-1,p_1,\dots) \nn\\
& \quad + G(n-1,p_0-1,p_1,\dots,p_{i-2},p_{i-1}+1,p_i-1,p_{i+1},\dots,)\Bigr) \nn\\
& +(6g+3n-6-p_0) \, G(n-1,p_0-1,p_1,\dots)\biggr). \nn
\end{align}
\end{cor}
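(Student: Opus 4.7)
My plan is to derive this identity from the string equation~\eqref{stringeq}, following the same pattern by which Corollary~\ref{cordilaton} is obtained from the dilaton equation. The first step is to translate the string equation, written in correlator form as
\begin{equation*}
\int_{\overline{\mathcal{M}}_{g,n+1}}\psi_1^{0}\,\psi_2^{d_1}\cdots\psi_{n+1}^{d_n}=\sum_{j:\,d_j\geq 1}\int_{\Mgnbar}\psi_1^{d_1}\cdots\psi_j^{d_j-1}\cdots\psi_n^{d_n},
\end{equation*}
into a relation among the normalized intersection numbers. Multiplying through by the prefactor in~\eqref{defiG} and using $(6g+2(n+1)-5)!!=(6g+2n-3)(6g+2n-5)!!$ as well as $(2d_j+1)!!/(2d_j-1)!!=2d_j+1$, I obtain
\begin{equation*}
\G_{0,d_1,\dots,d_n}(g)=\frac{1}{6g+2n-3}\sum_{j:\,d_j\geq 1}(2d_j+1)\,\G_{d_1,\dots,d_j-1,\dots,d_n}(g).
\end{equation*}

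The next step is to pass to the formal $1/g$ expansions, which, under Conjecture~\ref{conj1011}, are polynomial identities in the multiplicities. I group the RHS sum by the common value $d_j=i$ of each fixed index $j\in\{1,\dots,n-1\}$: this gives a factor $(2i+1)p_i$ times a $G(n,\cdot)$ whose multiplicity vector is modified by $p_i\mapsto p_i-1$ and $p_{i-1}\mapsto p_{i-1}+1$. The remaining term $j=n$ contributes $(2d_n+1)$ times the $G$-series with unchanged fixed part (decrementing the large index by one does not affect the asymptotic expansion). Relabelling $n+1\mapsto n$ and $p_0+1\mapsto p_0$ so as to match the corollary's indexing, I obtain the preliminary identity
\begin{align*}
(6g+2n-5)G(n,p_0,p_1,\dots)&=3p_1\,G(n-1,p_0,p_1-1,p_2,\dots)\\
&\quad+\sum_{i\geq 2}(2i+1)p_i\,G(n-1,p_0-1,p_1,\dots,p_{i-1}+1,p_i-1,\dots)\\
&\quad+(2d_n+1)\,G(n-1,p_0-1,p_1,p_2,\dots).
\end{align*}

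The final step is an algebraic rearrangement. Using the dimension constraint $d_n=3g-3+n-\sum_{i\geq 1}i p_i$ and the identity $n-1=\sum_{i\geq 0}p_i$, a short computation gives the decomposition
\begin{equation*}
2d_n+1=(6g+3n-6-p_0)-3p_1-\sum_{i\geq 2}(2i+1)p_i.
\end{equation*}
Redistributing the $-3p_1$ and $-(2i+1)p_i$ contributions so that each pairs with the corresponding $G(n-1,p_0-1,p_1,\dots)$ term arising in the preliminary identity produces exactly the differences $G(n-1,p_0,p_1-1,\dots)-G(n-1,p_0-1,p_1,\dots)$ and $G(n-1,p_0-1,\dots,p_{i-1}+1,p_i-1,\dots)-G(n-1,p_0-1,p_1,\dots)$ appearing in the statement, while the leftover coefficient of $G(n-1,p_0-1,p_1,\dots)$ is precisely $(6g+3n-6-p_0)$. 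I expect the principal difficulty to be the combinatorial bookkeeping of the multiplicity-vector shifts and the verification of the above decomposition of $2d_n+1$; no new analytic input is needed since Conjecture~\ref{conj1011} is assumed throughout, and the asserted validity ``for every $K\geq 1$'' simply reflects the identity holding order-by-order in the $1/g$ expansion.
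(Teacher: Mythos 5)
Your proposal is correct and is essentially the paper's own (unwritten) argument: the paper merely asserts that Corollary~\ref{corstring} follows from the string equation~\eqref{stringeq}, and your derivation — normalizing the string relation by the prefactor in~\eqref{defiG}, grouping the removable indices by their common value $i$ with weight $(2i+1)p_i$, and decomposing $2d_n+1=(6g+3n-6-p_0)-3p_1-\sum_{i\ge2}(2i+1)p_i$ via $\sum_{i\ge1}ip_i=|d|$ and $\sum_{i\ge0}p_i=n-1$ — is exactly the intended route, and the decomposition checks out. The only cosmetic slip is the remark that ``decrementing the large index does not affect the expansion'': in fact $d_n-1$ is precisely the canonical last index $3g-3+n-|d|$ for the $n$-point function, so nothing needs to be waived there.
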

Formula~\eqref{stringapp} can be equivalently written as
\begin{align}
& G^{\rm app}_K\Bigl(n,p_0,\dots,p_{\left[\frac32 K\right]-1}\Bigr) = \frac1{6g+2n-5} \nn \\
& \times \biggl(
3 p_1 \Bigl(G^{\rm app}_K \Bigl(n-1,p_0,p_1-1,p_2,\dots,p_{\left[\frac32 K\right]-1}\Bigr) \nn\\
& \qquad\qquad 
-G^{\rm app}_K \Bigl(n-1,p_0-1,p_1,\dots,p_{\left[\frac32 K\right]-1}\Bigr)\Bigr) \nn\\
&+ \sum_{i=2}^{\left[\frac{3K}{2}\right]} (2i+1) \, p_i \, \Bigl( -G^{\rm app}_K \Bigl(n-1,p_0-1,p_1,\dots,p_{\left[\frac32 K\right]-1}\Bigr) \nn\\
& \quad + G^{\rm app}_K\Bigl(n-1,p_0-1,p_1,\dots,p_{i-2},p_{i-1}+1,p_i-1,p_{i+1},\dots,p_{\left[\frac32 K\right]-1}\Bigr)\Bigr) \nn\\
& +(6g+3n-6-p_0) \, G^{\rm app}_K\Bigl(n-1,p_0-1,p_1,\dots,p_{\left[\frac32 K\right]-1}\Bigr)\biggr) + 
\mathcal{O}\bigl(g^{-K-1}\bigr). \nn
\end{align}
Substituting~\eqref{1011112} in~\eqref{1011114} we obtain 
\begin{align}
&G_k\Bigl(n,p_0,\dots,p_{\left[\frac{3k}{2}\right]-1}\Bigr) -
G_k\Bigl(n-1,p_0,p_1-1,p_2,\dots,p_{\left[\frac{3k}{2}\right]-1}\Bigr)	\label{dilaton}\\
& = \sum_{j=0}^{k-1} (-1)^j \, 
\frac{(n-4)(2n-5)^j}{6^{j+1}} \, G_{k-1-j}\Bigl(n-1,p_0,p_1-1,p_2,\dots,p_{\left[\frac{3}{2}(k-1-j)\right]-1}\Bigr).\nn
\end{align}
Similarly, substituting~\eqref{1011113} in~\eqref{stringapp} we obtain
\begin{align}
&G_k\Bigl(n,p_0,\dots,p_{\left[\frac{3k}{2}\right]-1}\Bigr) - G_{k}\Bigl(n-1,p_0-1,p_1,\dots,p_{\left[\frac{3k}2 \right]-1}\Bigr)\label{string}\\
&=3p_1 \sum_{j=0}^{k-1} (-1)^j \, \frac{(2n-5)^j}{6^{j+1}} \, \Bigl(G_{k-1-j} \Bigl(n-1,p_0,p_1-1,p_2,\dots,p_{\left[\frac32 (k-1-j)\right]-1}\Bigr) \nn\\
& \qquad\qquad -G_{k-1-j} \Bigl(n-1,p_0-1,p_1,\dots,p_{\left[\frac32 (k-1-j)\right]-1}\Bigr)
\Bigr)\nn\\
& + \sum_{j=0}^{k-1} (-1)^j \, \frac{(2n-5)^j}{6^{j+1}} \, \sum_{i=2}^{\left[\frac{3k-3-3j}{2}\right]}  (2i+1)p_i \nn\\
& \Bigl(G_{k-1-j}\Bigl(n-1,p_0-1,p_1,\dots,p_{i-2},p_{i-1}+1,p_i-1,p_{i+1},\dots,p_{\left[\frac32 (k-1-j)\right]-1}\Bigr) \nn\\
& \quad - G_{k-1-j} \Bigl(n-1,p_0-1,p_1,\dots,p_{\left[\frac32 (k-1-j)\right]-1}\Bigr)\Bigr)
\nn\\
& + 6 \sum_{j=1}^{k} (-1)^j \, \frac{(2n-5)^j}{6^{j+1}} \, G_{k-j}\Bigl(n-1,p_0-1,p_1,\dots,p_{\left[\frac32 (k-j)\right]-1}\Bigr) \nn\\
& + (3n-6-p_0) \sum_{j=0}^{k-1} (-1)^j \, \frac{(2n-5)^j}{6^{j+1}} \nn\\
& \qquad \times G_{k-1-j} 
\Bigl(n-1,p_0-1,p_1,\dots,p_{\left[\frac32 (k-1-j)\right]-1}\Bigr). \nn
\end{align}

\begin{appendix}

\section{Proofs of the identity~\eqref{defia} and Lemmas \ref{lem1even0906}--\ref{lem3'}, \ref{lem7}} \label{app0918}

In this appendix, we give the proofs of the identity~\eqref{defia} and Lemmas \ref{lem1even0906}--\ref{lem3'},~\ref{lem7}. 

\begin{proof}[Proof of the identity~\eqref{defia}]
Let $\mu_1=\begin{pmatrix}0&1\\0&0\end{pmatrix}$, $\mu_2=\begin{pmatrix}0&0\\1&0\end{pmatrix}$, $\mu_3=\begin{pmatrix}1&0\\0&-1\end{pmatrix}$. Then $A_{3g}=b_{3g}\mu_1$, $A_{3g-1}=b_{3g-1}\mu_2$, $A_{3g-2}=-b_{3g-2}\mu_3$.  The following multiplication table could be helpful: 
$\mu_1^2=\mu_2^2=0$, $\mu_3^2=I$, $\mu_1\mu_2=\mu_4$, $\mu_2\mu_1=\mu_5$, $\mu_1\mu_3=\mu_1$, $\mu_3\mu_1=-\mu_1$, $\mu_2\mu_3=-\mu_1$, $\mu_3\mu_2=\mu_2$, where $\mu_4=\begin{pmatrix}1&0\\0&0\end{pmatrix}$, $\mu_5=\begin{pmatrix}0&0\\0&1\end{pmatrix}$. To prove~\eqref{defia}, let us 
prove the following identity:
\begin{align}\label{Ak1Akn}
A_{k_1}\cdots A_{k_n}=\left\{\begin{array}{ll}
b_{k_1}\cdots b_{k_n}I,\quad &\quad P_1,\\
(-1)^{\sum_{j=0}^{s}(i_{2j+1}-i_{2j}-1)}b_{k_1}\cdots b_{k_n}\mu_4,\quad  &\quad P_2, \\
(-1)^{\sum_{j=1}^{s}(i_{2j}-i_{2j-1}-1)}b_{k_1}\cdots b_{k_n}\mu_5,\quad  &\quad P_3, \\
-b_{k_1}\cdots b_{k_n}\mu_3, \quad &\quad P_4,\\
(-1)^{\sum_{j=0}^{s}(i_{2j+1}-i_{2j}-1)}b_{k_1}\cdots b_{k_n}\mu_1, \quad &\quad P_5,\\
(-1)^{\sum_{j=1}^{s+1}(i_{2j}-i_{2j-1}-1)}b_{k_1}\cdots b_{k_n}\mu_2,\quad  &\quad P_6,\\
0, \quad & \quad P_7.
\end{array}\right.
\end{align}
Here, $P_1$--$P_3$ are defined in Section~\ref{section2}, the conditions $P_4$--$P_6$ are defined by
\begin{itemize}
\item[$P_4$:] $n$ is odd and $k_i\equiv 1\,({\rm mod}\,3)$ for all $i=1,\dots,n$,
\item[$P_5$:] there exist $i_1<\dots<i_{2s+1}$, such that $k_{i_{2j-1}}\equiv0\,({\rm mod}\,3)$, $\forall\, j=1,\dots,s+1$, $k_{i_{2j}}\equiv 2\,({\rm mod}\,3)$, $\forall\, j=1,\dots,s$, 
and $k_{t}\equiv 1\,({\rm mod}\,3)$, $\forall \, t\notin \{{i_1}, \dots ,i_{2s+1}\}$, setting $i_0=0$,
\item[$P_6$:] there exist $i_1<\dots< i_{2s+1}$, such that $k_{i_{2j-1}}\equiv2\,({\rm mod}\,3)$, $\forall\, j=1,\dots,s+1$, $k_{i_{2j}}\equiv 0\,({\rm mod}\,3)$, $\forall\, j=1,\dots,s$, 
and $k_{t}\equiv 1\,({\rm mod}\,3)$, $\forall\,t\notin \{{i_1},\dots ,i_{2s+1}\}$, setting $i_{2s+2}=n+1$,
\end{itemize}
and $P_7$ denotes the complement of the union of conditions $P_1$--$P_6$. For $n=1$, the identity~\eqref{Ak1Akn} can be verified easily. Assuming~\eqref{Ak1Akn} holds for $n=N$, let us prove the validity of~\eqref{Ak1Akn} for $n=N+1$. There are 21 cases for the inductive procedure, which can be seen from Table~\ref{table123}.
\begin{table}[h!]
\begin{tabular}{|c|c|c|c|c|c|c|c|}
\hline
\diagbox{$k_{N+1}$ mod 3}{$(k_1,\dots,k_{N+1})$}{$(k_1,\dots,k_{N})$} & $P_1$ & $P_2$ & $P_3$ & $P_4$ & $P_5$ & $P_6$ & $P_7$\\
\hline
0 & $P_5$ & $P_5$ & $P_7$ & $P_5$ & $P_7$ & $P_3$ & $P_7$\\
\hline
1 & $P_4$ & $P_2$ &$P_3$ & $P_1$ & $P_5$ & $P_6$ & $P_7$\\
\hline
2 & $P_6$ & $P_7$ & $P_6$ & $ P_6$ & $P_2$ & $P_7$ & $P_7$\\
\hline
\end{tabular}\vspace{2mm}
\caption{The 21 cases in the proof of the identity~\eqref{Ak1Akn}.}\label{table123}
\end{table}
By the one-by-one verifications and by induction we obtain the validity of the identity~\eqref{Ak1Akn}. 
For example, consider the case that $(k_1,\dots,k_N)$ satisfies $P_2$ and $k_{N+1}\equiv 0\,({\rm mod}\,3)$. 
Then $(k_1,\dots,k_{N+1})$ satisfies~$P_5$. We have
\begin{align}
A_{k_1}\cdots A_{k_{N+1}}&=(-1)^{\sum_{j=0}^{s}(i_{2j+1}-i_{2j}-1)}b_{k_1}\cdots b_{k_N}\mu_4b_{k_{N+1}}\mu_1\nn\\
&=(-1)^{\sum_{j=0}^{s}(i_{2j+1}-i_{2j}-1)}b_{k_1}\cdots b_{k_{N+1}}\mu_1,\nn
\end{align}
which agrees with~\eqref{Ak1Akn}. Finally, \eqref{Ak1Akn} implies~\eqref{defia}.
\end{proof}

\begin{proof}[Proof of Lemma~\ref{lem1even0906}]
Let $n\in \mathbb{Z}_{\rm even}^{\geq2}$ and let $k_1,\dots,k_n$ be integers satisfying that 
$-1 \le k_1,\dots,k_n\le 3g+3n/2-3-m$ and $k_1+\cdots+k_n=3g-3+n$.
We assume that $a_{k_1,\dots,k_n}\neq0$ (otherwise trivial).
Using~\eqref{a} we find that  
\begin{align}\label{abinequality}
\left|a_{k_1,\dots, k_{n}}\right|\leq 2\left|b_{k_1}\cdots b_{k_{n}}\right|.
\end{align}
Without loss of generality we assume that 
$$k_n\geq k_{n-1} \geq \max\{k_1,\dots,k_{n-2}\}.$$
Define the numbers $s_{-1}, s_{0}, s_{1}, s_{2}$ as follows:
\begin{align}
	 s_{-1} & :={\rm card} \{i\in\{1,\dots,{n-1}\}\mid k_i=-1\},\\ 
	 s_{0} & :={\rm card}\{i\in\{1,\dots,{n-1}\}\mid k_i\equiv0\,({\rm mod}\,3)\},\\ 
	 s_{1} & :={\rm card} \{i\in\{1,\dots,{n-1}\}\mid k_i\equiv1\,({\rm mod}\,3)\},\\ 
	 s_2 & :={\rm card} \{i\in\{1,\dots,{n-1}\}\mid k_i\geq 2, k_i\equiv2\,({\rm mod}\,3)\}. 
\end{align}
Also denote $\tilde s_{-1}=s_{-1}-\delta_{k_{n-1},-1}$, $\tilde s_0=s_0-\delta_{k_{n-1}\equiv 0\,({\rm mod}\,3)}$, 
$\tilde s_1=s_1-\delta_{k_{n-1}\equiv 1\,({\rm mod}\,3)}$, $\tilde s_2=s_2-\delta_{k_{n-1}\equiv 2\,({\rm mod}\,3)}\delta_{k_{n-1}\ge2}$.
Denote 
$$\tilde q=m-\frac{n}2+\tilde s_{-1}- \tilde s_1-2 \tilde s_2, \quad q=m-\frac{n}2+s_{-1}-s_1-2s_2.$$ 

Let us consider the following two cases:

\noindent Case 1. $\tilde q\geq -1$. For this case, 
using mainly the fact that $\left(\left|b_{k+3}/b_k\right|\right)_{k\ge0}$ is a strictly-increasing sequence 
and dividing the considerations into three subclasses:
$k_n\equiv -m, 1-m, 2-m \, ({\rm mod}\,3)$, one can find that  
\begin{align}\label{bk1bkn}
|b_{k_1}\cdots b_{k_n}|
&\leq 8\max\{|b_{\tilde q}|,|b_{\tilde q+1}|,|b_{\tilde q+2}|\}\left|b_{3g+\frac{3n}2-3-m} \right|,
\end{align}
where the facts that $|b_{k}|\leq2|b_{k+1}|$ for all $k\ge-1$ 
and that $|b_{-1}|, |b_{0}|, |b_{1}|, |b_{2}|$ are all less than or equal to~1
could be used. (Here, we provide for example for the subclass $k_n \equiv -m\,({\rm mod}\,3)$ a few more details: for $i=1,\dots,n-2$ and $k_i\ge0$, it is helpful to write $b_{k_i}=\frac{b_{k_i}}{b_{k_{i}-3}}\frac{b_{k_{i}-3}}{b_{k_{i}-6}}\cdots b_{k_i\, {\rm mod}\, 3}$,
$$b_{k_{n-1}}=\left\{\begin{array}{ll}
\frac{b_{k_{n-1}}}{b_{k_{n-1}+3}}\frac{b_{k_{n-1}+3}}{b_{k_{n-1}+6}}\cdots b_{\tilde{q}},  \quad k_{n-1}\leq \tilde{q},\\
\frac{b_{k_{n-1}}}{b_{k_{n-1}-3}}\frac{b_{k_{n-1}-3}}{b_{k_{n-1}-6}}\cdots b_{\tilde{q}},  \quad k_{n-1}> \tilde{q},
\end{array}
\right.$$
and $b_{k_n}=\frac{b_{k_{n}}}{b_{k_n+3}}\frac{b_{k_n+3}}{b_{k_n+6}}\cdots b_{3g+\frac{3n}2-3-m}$.)
Since $a_{k_1,\dots,k_n}\neq 0$, we know from~\eqref{a} that
 $\tilde s_{-1}\leq n/2$. So $\tilde q\leq m$.
We conclude from \eqref{bk1bkn}  that there exits a 
  constant~$C_2=C_2(m)$, {\it that is independent of $n,k_1,\dots,k_n$}, such that, for sufficiently large~$g$, 
 \begin{equation}\label{case1bb}
 	\left|b_{k_1}\cdots b_{k_n}\right| \leq C_2 \left |b_{3g+\frac{3n}2-3-m}\right|.
 \end{equation}
 
\noindent Case 2. $\tilde q<-1$. This implies $q\leq-1$. Using the fact that $\left(\left|b_{k+3}/b_k\right|\right)_{k\ge0}$ is an increasing sequence, we have 
\beq
\left|b_{k_1}\cdots b_{k_n}\right|\leq 
\left|b_{-1}^{s_{-1}} b_{0}^{s_0} b_1^{s_1} b_2^{s_2} b_{3g+\frac{3n}2-m-3+q} \right|.
\eeq
By using the facts that $|b_{k}|\leq2|b_{k+1}|$, $|b_{-1}|, |b_0|, |b_1|, |b_2|\le1$ and 
that for sufficiently large $k$, $|b_{k+2}|>|b_k|$, 
we further conclude the existence of an absolute constant $C_3$, such that for sufficiently large~$g$,
\beq\label{case2bb}
\left|b_{k_1}\cdots b_{k_n}\right| \leq C_3 \left|b_{3g+\frac{3n}2-3-m}\right|.
\eeq

Next, from the definition~\eqref{defbk}, one can verify that there 
exits a constant $C_4=C_4(m)$, that is {\it independent of~$n$}, such that, for sufficiently large~$g$, 
\begin{align} \label{36}
g^{\left[\frac23 m\right]} \left|b_{3g+\frac{3n}2-3-m}\right| \leq C_4 \left|b_{3g+\frac{3n}2-3}\right|.
\end{align}

Using \eqref{case1bb}, \eqref{case2bb}, \eqref{36} we thus obtain the existence of 
a constant $C_1=C_1(m)$, that is {\it independent of $n,k_1,\dots,k_n$}, 
such that for $g$ sufficiently large,  
\begin{equation}
g^{\left[\frac23 m\right]} \left|b_{k_1}\cdots b_{k_n}\right| \leq C_1\, \frac{(6g+3n-7)!!}{24^{g+\frac{n}2-1}\left(g+\frac{n}2-1\right)!}.
\end{equation}
Lemma~\ref{lem1even0906} is proved.
\end{proof}

The proof of Lemma~\ref{lem1odd0906} is similar to that of Lemma~\ref{lem1even0906}; details are omitted.

\begin{proof}[Proof of Lemma~\ref{lem2}]
Let us assume that 
$a_{k_1,\dots,k_n}\neq 0$ (otherwise trivial). The inequality~\eqref{abinequality} 
can obviously be written as  
\beq\label{acineq}
\left|a_{k_1,\dots,k_n}\right| \, \prod_{j=1}^n(k_j+2)^2\leq 2\left|c_{k_1}\cdots c_{k_n}\right|.
\eeq 
Here, 
\beq
c_k:=b_k \, (k+2)^2, \quad k\ge -1.
\eeq
For example,
$\left|c_{-1}\right|=1$, $\left|c_{0}\right|=4$, $\left|c_{1}\right|=9/2$, $\left|c_2\right|=14$,
$\left|c_{3}\right|=125/8$.
One can verify that $(\left|c_k\right|)_{k\geq0}$ are a strictly-increasing sequence and that 
$\forall\,\ell\geq2$ and $k\geq0$, $\left|c_{k+\ell+3}/c_{k+\ell}\right|\geq\left|c_{k+3}/c_k\right|$.
Thus, 
\begin{equation}\label{ck1ckn}
\left|c_{k_1}\cdots c_{k_n}\right|\leq \left|c_{-1}^{s_{-1}} c_0^{s_{00}} c_1^{s_1} c_2^{s_2} c_3^{s_{03}} c_{3g+n-3+s_{-1}-s_1-2s_2-3 s_{03}}\right|,
\end{equation}
where 
\begin{align}
s_{00} &:={\rm card} \{i\in\{1,\dots,n-1\}\mid k_i=0\},\\
s_{03} &:={\rm card} \{i\in\{1,\dots,n-1\}\mid k_i\geq 3, k_i\equiv0\,({\rm mod}\,3)\}.
\end{align}
Note that $s_0=s_{00}+s_{03}$. From the condition $m\leq {\rm card} \{i\mid k_i\geq 1\}$, we know that $s_1+s_2+s_{03}\geq m-1$.
Using the fact that for sufficiently large $k$, $\left|c_{k+2}/c_k\right|>c_3^2/c_2^2$ and using~\eqref{ck1ckn}, we find that for sufficiently large~$g$,
\begin{align}
\left|c_{k_1}\cdots c_{k_n}\right|
&\leq\left\{\begin{array}{ll}
\left|c_{-1}^{s_{-1}}c_0^{s_{00}}c_1^{s_1}c_2^{s_2+s_{03}}c_{3g+n-3+s_{-1}-s_1-2s_2-2s_{03}}\right|, & s_{03} {\rm~even},\\
\\
\left|c_{-1}^{s_{-1}}c_0^{s_{00}}c_1^{s_1}c_2^{s_2+s_{03}-1}c_3 c_{3g+n-4+s_{-1}-s_1-2s_2-2s_{03}}\right|, &s_{03} {\rm~odd}.
\end{array}\right.
\end{align}
Since 
$\left|c_3/c_2\right|<1.2$ and $(\left|c_k\right|)_{k\geq0}$ is a strictly-increasing sequence, we find
\begin{equation}\label{ck1ckn2}
    	\left|c_{k_1}\cdots c_{k_n}\right|\leq 1.2\left|c_{-1}^{s_{-1}} c_0^{s_{00}} c_1^{s_1} c_2^{s_2+s_{03}}c_{3g+n-3+s_{-1}-s_1-2s_2-2s_{03}}\right|.
\end{equation}

Since $a_{k_1,\dots,k_n}\neq0$ and since~\eqref{a}, we have $s_{-1}+s_2\leq \left[(n-s_1)/2\right]$. So  
$$3g+n-3+s_{-1}-s_1-2s_2-2s_{03}\leq 3g+\left[\frac{3n}2-\frac{3}2(m-1)\right]-3,$$
where we also used $s_{1}+s_{2}+s_{03}\ge m-1$.
Noticing again that $(\left|c_k\right|)_{k\geq0}$ is strictly-increasing 
and using $s_{00}+s_1+s_2+s_{03}+s_{-1} = n-1$,
we have
\begin{align}\label{thm4formula1}
\left|c_{k_1}\cdots c_{k_n}\right|
&\leq 1.2\times14^{n-1}\left|c_{3g-3+\left[\frac{3n}2-\frac{3}2 (m-1)\right]}\right|\\
&\leq 1.2\times14^{n-1}\left|b_{3g-3+\left[\frac{3n}2-\frac{3}2 (m-1)\right]}\right| \left(3g+\left[\frac{3n}{2}\right]\right)^2. \nn
\end{align}
Using \eqref{defbk}, we know that there exists a constant $C_7=C_7(C)$, that is {\it independent of $n,m$}, such that for sufficiently large $g$ 
and $m\leq n\leq C\log(g)$,
\begin{equation}\label{thm4formula3}
g^{m-3}\,\left|b_{3g-3+\left[\frac{3n}2-\frac{3}2 (m-1)\right]}\right| \left(3g+\left[\tfrac{3n}2\right]\right)^2
\leq C_7\, \tfrac{\left(6g+2\left[\frac{3n}2\right]-7\right)!!}{24^{g+\frac{n}2-1}\left(g+\frac{n}2-1\right)!}.
\end{equation}
Combined with~\eqref{thm4formula1}, the lemma is proved.
\end{proof}

\begin{proof}[Proof of Lemma~\ref{lem3}]
Recalling that $\forall\,\ell\geq2$ and $k\geq0$, $\left|c_{k+\ell+3}/c_{k+\ell}\right|\geq\left|c_{k+3}/c_k\right|$, 
the proof will be similar to that of Lemma~\ref{lem1even0906}.
Let $g>0$ be a sufficiently large integer and $2\leq n\leq C\log(g)$ be an even integer. Assume that $a_{k_1,\dots,k_n}\neq 0$ (otherwise trivial). 
Due to~\eqref{acineq} we also assume that $k_n\geq k_{n-1} \geq \max\{k_1,\dots,k_{n-2}\}$.

Consider the following two cases:
	
\noindent Case~1.  $\tilde q\ge-1$. Since $a_{k_1,\dots k_n}\neq0$, using \eqref{a}, we have $\tilde s_{-1}\leq n/2$, thus, 
$\tilde s_{1}+2\tilde s_2\leq m+1$. Similarly to the proof in Lemma~\ref{lem1even0906}, we have
\begin{align}
\left|c_{k_1} \cdots c_{k_n}\right|
&\leq 2^n14^{m+1}\max\{|c_{\tilde q}|,|c_{\tilde q+1}|,|c_{\tilde q+2}|\} \left|c_{3g+\frac{3n}2-3-m}\right|.
\label{ck1cknlem5}
\end{align}
Since $\tilde s_{-1}\leq\frac n2$, we have $\tilde q\leq m$. Combined with~\eqref{ck1cknlem5} we know that
 there exists a constant $C_9=C_9(m)$, that is {\it independent of $n,k_1,\dots,k_n$}, such that
\begin{align}\label{c91012}
|c_{k_1}\cdots c_{k_n}|\leq C_9\,2^n\left|c_{3g+\frac{3n}2-3-m}\right|.
\end{align}

\noindent Case~2. $\tilde q<-1$. This implies $q\leq -1$.
Similarly to the proof in Lemma~\ref{lem1even0906}, we have that 
for sufficiently large $g$ and $n\leq C\log(g)$, 
there exists a constant $C_{10}=C_{10}(m)$, that is {\it independent of $n,k_1,\dots,k_n$}, such that
\begin{align}\label{c101012}
\left|c_{k_1} \cdots c_{k_n}\right|&\leq 4^{s_0} \left(\frac92\right)^{s_1} 14^{s_2} \left|c_{3g+\frac{3n}2-3-m+q}\right| 
\leq C_{10}\,2^n\left|c_{3g+\frac{3n}2-3-m}\right|.
\end{align}

Next, recall from~\eqref{36} that 
\begin{equation}\label{1401012}
	g^{\left[\frac23 m\right]}\left|c_{3g+\frac{3n}2-3-m}\right|\leq C_4\left|c_{3g+\frac{3n}2-3}\right|.
\end{equation}
From \eqref{c91012}, \eqref{c101012}, \eqref{1401012} we conclude that 
\begin{equation}
g^{\left[\frac23 m\right]-2}\left|c_{k_1}\cdots c_{k_n}\right|
\leq C_8\,2^n\,\frac{(6g+3n-7)!!}{24^{g+\frac{n}2-1} \bigl(g+\frac{n}2-1\bigr)!}\frac{\left(3g+\frac{3n}2\right)^2}{g^2},
\end{equation}
where $C_8:=C_4(m) \, \max\{C_9(m),C_{10}(m)\}$. The lemma is proved. 
\end{proof}

The proof of Lemma~\ref{lem3'} is similar to that of Lemma~\ref{lem3}, so we omit its details. 
Before proving Lemma~\ref{lem7} we mention that 
the following estimate is valid:
\begin{align}\label{stronglemma7}
	\lim_{g\to\infty}\max_{{2\leq n\leq C\log(g)}}\max_{d_1,\dots,d_n\ge 0 \atop d_1+\cdots+d_n=3g+n-3}\max_{\sigma\in S_{n,1}^{\{2\}}}2^{n-2}\left|\gamma_{\underline{d},\sigma}(g)-\frac{1}{2^{n-2}}\right|=0,
\end{align}
whose proof is exactly similar to 
the estimates in the proof of Lemma~\ref{lem6}. This statement~\eqref{stronglemma7} is stronger than Lemma~\ref{lem6}. 
Let us now proceed to prove Lemma~\ref{lem7}.

\begin{proof}[Proof of Lemma~\ref{lem7}]
Let us first prove~\eqref{lem7formula1} with $l=2$. 
Consider the case when $n$ is even.
Take $D_{2}=8+[6C]$. 
For every $\sigma\in S_{n,1}^{\{4\}}$, we have
\begin{align}\label{lem8formula}
&  \sum_{\underline{j}\in \left(\ZZ^{\ge0}\right)^n} 
a_{K_{\underline{d},\sigma,1}(\underline{j}),\dots,K_{\underline{d},\sigma,n}(\underline{j})}\\
& =  \left(\sum_{\underline{j} \in V^{3g+\frac{3n}2-D_2}_{\underline{d},\sigma,-1}}  
+ \sum_{\underline{j} \in V^{3g+\frac{3n}2-5}_{\underline{d},\sigma,3g+\frac{3n}2-D_2+1}} \right)  
a_{K_{\underline{d},\sigma,1}(\underline{j}),\dots,K_{\underline{d},\sigma,n}(\underline{j})}. \nn
\end{align}
For $g$ sufficiently large and $2\leq n\leq C\log (g)$,
similarly to the proof of~\eqref{lemma8formula1}, we have 
\begin{align}
& \left|\sum_{\underline{j} \in V^{3g+\frac{3n}2-D_2}_{\underline{d},\sigma,-1}}a_{K_{\underline{d},\sigma,1}(\underline{j}),\dots,K_{\underline{d},\sigma,n}(\underline{j})}\right| \label{formula141}\\
&\leq \frac{C_8(C,D_2-3) \,(6g+3n-7)!!}{24^{g+\frac{n}2-1}\left(g+\frac{n}2-1\right)!} \, \frac{2^n\left(\frac{\pi^2}6\right)^n}{g^{\left[\frac23 (D_2-3)\right]-2}}.\nn
\end{align}
Similarly to~\eqref{lem6estimate5}, we have
\begin{align}
& \left|\sum_{\sigma\in S_{n,1}^{\{4\}}}\sum_{\underline{j} \in V^{3g+\frac{3n}2-5}_{\underline{d},\sigma,3g+\frac{3n}2-D_2+1}} 
a_{K_{\underline{d},\sigma,1(\underline{j})},\dots,K_{\underline{d},\sigma,1(\underline{j})}} \right| \label{formula142}\\
& \leq \frac{2^n(6g+3n-7)!!}{24^{g+\frac{n}2-1}\left(g+\frac{n}2-1\right)!} 
\sum_{f=5}^{D_2-1} \bigl|\widetilde{A}(f,n)\bigr| \frac{C_1(f-3)}{g^{\left[\frac23(f-3)\right]}},\nn
\end{align}
where for every $f=5,\dots,D_2-1$, $\widetilde{A}(f,n)$ is a certain polynomial of $n$.
Let us give more details for the proof of~\eqref{formula142}. Firstly, observe that for each $f\ge5$, and for $k_1,\dots,k_n\ge-1$ satisfying $k_1+\cdots+k_n=3g+n-3$, $a_{k_1,\dots,k_n}\neq0$ and $\max_{1\leq i\leq n}k_i=3g+\frac {3n}2-f$, the equations
$$K_{\underline{d},\sigma,q}(\underline{j})=k_q, \quad1\leq q\leq n$$
for $\underline{j}\in\big(\ZZ^{\ge0}\big)^n$ could have solutions only when the distance of the two peaks of~$\sigma$ is bounded by~$4(f-3)$. The number of $\sigma\in S_{n,1}^{\{4\}}$ with this bounded distance of 
its two peaks is controlled by $2^n\tilde{B}(f,n)$, where $\tilde{B}(f,n)$ is a certain polynomial of $n$. 
Secondly, we observe that the number of $k_1,\dots,k_n\ge-1$ satisfying $k_1+\cdots+k_n=3g+n-3$, $a_{k_1,\dots,k_n}\neq0$ and $\max_{1\leq i\leq n}k_i=3g+\frac {3n}2-f$ is controlled by a certain polynomial of~$n$. 
Thirdly, for each such $(k_1,\dots,k_n)$, the equations
$K_{\underline{d},\sigma,q}(\underline{j})=k_q$, $1\leq q\leq n$ for $\underline{j}\in \left(\ZZ^{\ge0}\right)^n$ have less than $f-2$ solutions.

The above estimates can be done in a similar way when $n$ is odd. 
From these estimates one gets the validity of~\eqref{lem7formula1} with $l=2$.

Let us proceed to prove~\eqref{lem7formula1} with $l\ge3$.
Consider $n$ is even (again the case with $n$ odd is similar). Take $D_l=[3\bigl(5+C\log(4\pi^2l/3)\bigr)/2]$. 
(It satisfies that $C\log (4\pi^2l/3)<\left[2(D_l-3)/3\right]-2$.)
We have 
\begin{align}\label{lem8formula'}
&  \sum_{\underline{j}\in \left(\ZZ^{\ge0}\right)^n} 
a_{K_{\underline{d},\sigma,1}(\underline{j}),\dots,K_{\underline{d},\sigma,n}(\underline{j})}\\
& =  \left(\sum_{\underline{j} \in V^{3g+\frac{3n}2-D_l}_{\underline{d},\sigma,-1}}  
+ \sum_{\underline{j} \in V^{3g+\frac{3n}2-6}_{\underline{d},\sigma,3g+\frac{3n}2-D_l+1}}\right)  
a_{K_{\underline{d},\sigma,1}(\underline{j}),\dots,K_{\underline{d},\sigma,n}(\underline{j})}. \nn
\end{align}
The rest of the proof of~\eqref{lem7formula1} is similar to the above for $l=2$.

Let us now prove~\eqref{lem7formula2}. For $\sigma\in S_{n,1}^{\{2l\}}$ ($l\geq3$),  
using \eqref{defSn2l1}--\eqref{defSn2l2} we find
$$K_{\underline{d},\sigma,t_{2i}(\underline{j})}=d_{\sigma(t_{2i})}+j_{t_{2i}}+j_{t_{2i-1}}+1\geq1,\quad \forall \, i=1,\dots,l.$$
So by applying Lemma~\ref{lem2} we have
\begin{align}
&\Biggl|\sum_{\underline{j}\in V_{\underline{d},\sigma,-1}^{3g+\left[\frac{3n}2\right]-3}} a_{K_{\underline{d},\sigma,1}(\underline{j}),\dots,K_{\underline{d},\sigma,n}(\underline{j})}\Biggr|\label{lem7proof}\\
&\leq
\tfrac{\left(6g+2\left[\frac{3n}2\right]-7\right)!!}{24^{g+\left[\frac{n}2\right]-1} \left(g+\left[\frac{n}2\right]-1\right)!}
\sum_{\underline{j}\in V_{\underline{d},\sigma,-1}^{3g+\left[\frac{3n}2\right]-3}} \left|\kappa_{K_{\underline{d},\sigma,1}(\underline{j}),\dots,K_{\underline{d},\sigma,n}(\underline{j})}\right|\nn\\
&\leq \tfrac{\left(6g+2\left[\frac{3n}2\right]-7\right)!!}{24^{g+\left[\frac{n}2\right]-1}\left(g+\left[\frac{n}2\right]-1\right)!} 
\sum_{\underline{j} \in V_{\underline{d},\sigma,-1}^{3g+\left[\frac{3n}2\right]-3}} 
\tfrac{C_6(C)\,14^n}{\prod_{q=1}^n(K_{\underline{d},\sigma,q}(\underline{j})+2)^2} \, g^{3-l}\nn\\
&\leq \tfrac{\left(6g+2\left[\frac{3n}2\right]-7\right)!!}{24^{g+\left[\frac{n}2\right]-1} \left(g+\left[\frac{n}2\right]-1\right)!} 
\sum_{j_{t_2}=0}^{3g+\left[\frac{3n}2\right]-3}\sum_{k_1,\dots,\widehat{k_{t_2}},\dots,k_{n}=-1}^{3g+\left[\frac{3n}2\right]-3} 
\tfrac{C_6(C) \,14^n(k_{t_2}+2)^2}{(j_{t_2}+3)^2\prod_{q=1}^n(k_q+2)^2} \, g^{3-l}\nn\\
&\leq \tfrac{\left(6g+2\left[\frac{3n}2\right]-7\right)!!}{24^{g+\left[\frac{n}2\right]-1} \left(g+\left[\frac{n}2\right]-1\right)!} 
\tfrac{C_6(C) \,14^n \, \bigl(\frac{\pi^2}6\bigr)^n}{g^{l-3}}.\nn
\end{align}
Now by using~\eqref{lem7proof} and 
the fact that there exists a constant $C_{13}=C_{13}(C)$, {\it that is independent of $n,g$}, such that for sufficiently large $g$,
\begin{equation}
\frac{2^{n-2} \left(6g+2\left[\frac{3n}2\right]-7\right)!!}{24^{g+\left[\frac{n}2\right]-1} 
	\left(g+\left[\frac{n}2\right]-1\right)!}\leq C_{13}\frac{(6g+2n-5)!!}{24^g g!}
\end{equation}
holds for all $2\leq n\leq C\log(g)$, we get the validity of~\eqref{lem7formula2}.
\end{proof}
\end{appendix}

\end{document}